\numberwithin{equation}{section}
\newtheorem{Theorem}{Theorem}[section]
\newtheorem*{Theorem*}{Theorem}
\newtheorem{Corollary}[Theorem]{Corollary}
\newtheorem{Lemma}[Theorem]{Lemma}
\theoremstyle{definition}
\newtheorem{Definition}[Theorem]{Definition}
\newtheorem{Example}[Theorem]{Example}
\newtheorem{Remark}[Theorem]{Remark} }
\newcommand*{\Z}{\mathbb Z}		
\newcommand*{\R}{\mathbb R}		
\newcommand*{\C}{\mathbb C}		
\newcommand*{\tensor}{\otimes}		
\DeclarePairedDelimiter{\scal}{\langle}{\rangle}	
\DeclareMathOperator{\Span}{span}
\DeclareMathOperator{\id}{id}		
\DeclareMathOperator{\Tr}{Tr}		
\DeclareMathOperator{\Ad}{Ad}		
\DeclareMathOperator{\Aut}{Aut}
\DeclareMathOperator{\der}{der}
\DeclareMathOperator{\gau}{gau}
\DeclareMathOperator{\Gau}{Gau}
\DeclareMathOperator{\Hom}{Hom}
\newcommand*{\Star}{$^*$\nobreakdash}
\newcommand*{\acts}{\,.\,}
\newcommand*{\hilb}{\mathfrak}		
\newcommand*{\alg}{\mathcal}		
\newcommand*{\hH}{\hilb{H}}			
\newcommand*{\one}{1}				
\newcommand*{\aA}{\alg{A}}			
\newcommand*{\aB}{\alg{B}} 			
\newcommand*{\End}{\mathcal{L}} 
\newcommand*{\Cont}{C}
\DeclarePairedDelimiterX{\lprod}[2]{{\,\prescript{}{#1}{\langle}}}{\rangle}{#2}
\DeclarePairedDelimiterX{\rprod}[2]{\langle}{\rangle_{#1}}{#2}
\DeclarePairedDelimiterX{\ketbra}[2]{\lvert}{\rvert}{#1 \delimsize\rangle \delimsize\langle #2}
\DeclareMathOperator{\Irrep}{Irr} 			
\newcommand{\doubleitem}{%
 \begingroup
 \stepcounter{enumi}%
 \edef\tmp{\theenumi,}%
 \stepcounter{enumi}
 \edef\tmp{\endgroup\noexpand\item[\tmp\labelenumi]}%
 \tmp}
 \newcommand{\tripleitem}{%
 \begingroup \stepcounter{enumi}%
 \edef\tmp{\theenumi,}%
 \stepcounter{enumi}
 \edef\tmpt{\theenumi,}
 \stepcounter{enumi}
 \edef\tmp{\endgroup\noexpand\item[\tmp\tmpt\labelenumi]}%
 \tmp}
\begin{document}
\allowdisplaybreaks

\newcommand{\arXivNumber}{2107.04653}

\renewcommand{\PaperNumber}{015}

\FirstPageHeading

\ShortArticleName{An Atiyah Sequence for Noncommutative Principal Bundles}

\ArticleName{An Atiyah Sequence\\ for Noncommutative Principal Bundles}

\Author{Kay SCHWIEGER~$^{\rm a}$ and Stefan WAGNER~$^{\rm b}$}

\AuthorNameForHeading{K.~Schwieger and S.~Wagner}

\Address{$^{\rm a)}$~iteratec GmbH, Zettachring 6, 70567 Stuttgart, Germany}
\EmailD{\href{mailto:kay.schwieger@gmail.com}{kay.schwieger@gmail.com}}
\URLaddressD{\url{https://www.xing.com/profile/Kay_Schwieger/cv}}

\Address{$^{\rm b)}$~Blekinge Tekniska H\"ogskola, SE-371 79 Karlskrona, Sweden}
\EmailD{\href{mailto:stefan.wagner@bth.se}{stefan.wagner@bth.se}}
\URLaddressD{\url{https://www.bth.se/eng/staff/stefan-wagner-stw/}}

\ArticleDates{Received July 26, 2021, in final form February 21, 2022; Published online March 07, 2022}

\Abstract{We present a derivation-based Atiyah sequence for noncommutative principal bundles. Along the way we treat the problem of deciding when a given $^*$-automorphism on the quantum base space lifts to a $^*$-automorphism on the quantum total space that commutes with the underlying structure group.}

\Keywords{Atiyah sequence; noncommutative principal bundle; freeness; factor system}

\Classification{46L87; 46L85; 55R10}

\section{Introduction}\label{sec:intro}
	
Chern--Weil theory is an important tool for many disciplines such as analysis, geometry, and mathematical physics.
For instance, it provides invariants of principal bundles and vector bundles by means of connections and curvature, and thus a way to measure their non-triviality. The~Chern--Weil homomorphism of a smooth principal bundle $q\colon P \to M$ with structure group~$G$ is an algebra homomorphism from the algebra of polynomials that are invariant under the adjoint action of $G$ on its Lie algebra, into the even de Rham cohomology $H_{\text{dR}}^{2\bullet}(M,\mathbb{K})$.
This~map is achieved by evaluating an invariant polynomial on the curvature of a connection 1-form $\omega$ on~$P$.
The latter procedure involves lifting vector fields on $M$ horizontally with respect to $\omega$ to $G$-equivariant vector fields on $P$.
An important remark in this context is that connection 1-forms on $P$ are in a bijective correspondence with $C^{\infty}(M)$-linear sections of the associated Lie algebra extension
\begin{gather}
	0 \longrightarrow \mathfrak{gau}(P)\longrightarrow \mathcal{V}(P)^G\longrightarrow \mathcal{V}(M) \longrightarrow 0, \label{eq:Atiyah}
\end{gather}
which is well-known as the so-called \emph{Atiyah sequence} of the principal bundle $P$ (see, e.g.,~\cite{Ati57,Kob69II}).
About three decades after the seminal works of Chern, Weil, and Atiyah, Lecomte described in~\cite{Lec85} a cohomological construction which generalizes the classical Chern--Weil homomorphism: Lecomte's construction associates characteristic classes to each Lie algebra extension, and the classical construction of Chern and Weil arises in this context from the Atiyah sequence above.

The work presented here is an attempt towards a derivation-based Chern--Weil theory for noncommutative principal bundles.
More precisely, our main objective is to provide a derivation-based generalization of the classical Atiyah sequence in equation~\eqref{eq:Atiyah} to the setting of noncommutative principal bundles.
For this purpose, we focus on free C\Star-dynamical systems, which provide a natural framework for noncommutative principal bundles (see, e.g., \cite{BaCoHa15,Ell00,Phi87,SchWa17c} and references therein).
Their structure theory and their relation to $K$-theory (see, e.g., \cite{CoYa13a,ForRen19,SchWa17a,SchWa17b,SchWa17c} and references therein) certainly appeal to operator algebraists and functional analysts.
Additionally, noncommutative principal bundles are becoming increasingly prevalent in various applications of geometry (cf.~\cite{IoMa16,Ivan17,Meir18,SchWa20,SchWa21}) and mathematical physics (see, e.g., \cite{BaHa-etal07,CacMes19,DaSi13,DaSiZu14,EchNeOy09,HaMa10,LaSu05,Wahl10} and references therein).

For the sake of completeness, we bring to mind that the algebraic setting of Hopf-Galois extensions already comprises abstract notions of connections, curvature and characteristic classes in terms of a universal differential calculus (see, e.g.,~\cite{BeMA20,Bre04,DGH01} and references therein).
Furthermore, we recall that Neeb~\cite{Neeb08} associated Lie group extensions with projective modules that generalize the classical Atiyah sequence for vector bundles.

Finally, we wish to mention the recent theory of pseudo-Riemannian calculus introduced by Arnlind and Wilson in~\cite{Jo17}, which constitutes a derivation-based computational framework for Riemannian geometry over noncommutative algebras (see also~\cite{Jo21, Jo18}).
It is our hope that this work will advance to the development of pseudo-Riemannian calculus.

\subsection*{Organization of the article}

Let $(\aA,G,\alpha)$ be a free C\Star-dynamical system with fixed point algebra $\aB$.
After some preliminaries, we review in Section~\ref{sec:facsys} the notion of a factor system of $(\aA,G,\alpha)$, which is the key feature of our investigation.
In fact, factor systems provide us with a natural framework for doing computations and constitute invariants for $(\aA,G,\alpha)$.
In Section~\ref{sec:lift_automorphism} we utilize factor systems to give a characterization of when a \Star-automorphism on $\aB$ can be lifted to a \Star-automorphism on $\aA$ that commutes with $\alpha$ (Theorem~\ref{thm:lift_conjugated_factor_sys}).
Moreover, we use our findings to examine an ``integrated'' version of the Atiyah sequence for noncommutative principal bundles (equation~\eqref{eq:AtiyahNCgroup}).
Section~\ref{sec:cpt_abelian} is devoted to the study of the special case when $G$ is compact Abelian.
Most notably, we get a characterization in terms of second group cohomology on the dual group of $G$ with values in the unitary group of the center of $\aB$ (Theorem~\ref{thm:lift_cohomology}).
In addition, we are able to show that if $\aA$ is commutative, then every \Star-automorphism on~$\aB$ lifts to~$\aA$ provided it leaves the class of~$\aA$ invariant (Corollary~\ref{cor:commAbelian}).
In Section~\ref{sec:1-para} we make use of the results of Section~\ref{sec:lift_automorphism} to establish a~lifting result for 1-parameter groups (Theorem~\ref{thm:lift_1_param}).
This is of particular interest in Section~\ref{sec:Atiyah}, where we finally present a generalization of the classical Atiyah sequence in equation~\eqref{eq:Atiyah} to the setting of free C\Star-dynamical systems.
Last but not least, we discuss infinitesimal objects such as connections and curvature (Section~\ref{sec:infobj}).

Finally, we would like to mention that with little effort the arguments and the results in Sections~\ref{sec:facsys},~\ref{sec:lift_automorphism}, and~\ref{sec:1-para} extend to free actions of quantum groups (see~\cite{SchWa17c}).
	
\section{Preliminaries and notation}\label{sec:pre}

This preliminary section exhibits the most fundamental definitions and notations used in this article.

{\bf About groups.}
Let $G$ be a compact group.
We write $\Irrep(G)$ for the set of equivalence classes of irreducible representations of $G$ and denote by $1 \in \Irrep(G)$ the class of the trivial representation.

{\bf About Hilbert spaces.}
Let $G$ be a compact group.
Furthermore, let $\hH_\sigma$, $\sigma \in \Irrep(G)$, be a~family of Hilbert spaces, and for each $\sigma \in \Irrep(G)$ let $T_\sigma$ be an operators on $\hH_\sigma$.
Throughout this article, we shall freely utilize the fact that $\sigma \mapsto \hH_\sigma$ and $\sigma \mapsto T_\sigma$ can be extended to arbitrary finite-dimensional representations of $G$ by taking direct sums with respect to irreducible subrepresentations.

{\bf About C\Star-dynamical systems.}
Let $\aA$ be a unital C\Star-algebra and let $G$ be a compact group that acts on $\aA$ by \Star-automorphisms $\alpha_g\colon \aA \to \aA$, $g \in G$, such that $G \times \aA \to \aA$, $(g,x) \mapsto \alpha_g(x)$ is continuous.
Throughout this article, we call such data a \emph{C\Star-dynamical system} and denote it briefly by $(\aA,G,\alpha)$.
Moreover, we typically write $\aB := \aA^G$ for the corresponding fixed point algebra.

The conditional expectation $P_1$ onto $\aB$ allows us to define a definite right $\aB$\nobreakdash-valued inner product on $\aA$ by
\begin{gather*}
	\scal{x,y}_\aB := P_1(x^*y) = \int_G \alpha_g(x^*y) \, {\rm d}g,
	\qquad
	x,y \in \aA.
\end{gather*}
The completion of $\aA$ with respect to the induced norm yields a right Hilbert $\aB$\nobreakdash-module, which we denote by $L^2(\aA)$.
The algebra $\aA$ admits a faithful \Star-representation on $L^2(\aA)$ by adjointable operators given by $\lambda\colon \aA \to \End \big( L^2(\aA) \big)$, $\lambda(x)y := x \cdot y$, and consequently we may identify $\aA$ with $\lambda(\aA) \subseteq \End \big( L^2(\aA) \big)$.
Furthermore, for each $g \in G$ we have a unitary operator $U_g$ on $L^2(\aA)$ defined for $x \in \aA \subseteq L^2(\aA)$ by $U_g x := \alpha_g(x)$.
The map $G \ni g \mapsto U_g \in \mathcal{U}\big( L^2(\aA) \big)$ is strongly continuous and implements the \Star-automorphisms $\alpha_g$, $g \in G$, via $\lambda ( \alpha_g(x)) = U_g \lambda(x) U_g^*$ for all $x \in \aA$.
Like every representation of~$G$, the algebra $\aA$ can be decomposed into its isotypic components, let us say, $A(\sigma)$, $\sigma \in \Irrep(G)$, which amounts to saying that their algebraic sum
\begin{gather*}
	\aA_f:= \bigoplus_{\sigma \in \Irrep(G)}^{\rm alg} A(\sigma)
\end{gather*}
is a dense \Star-subalgebra of~$\aA$.
Furthermore, the isotypic components are pairwise orthogonal, right Hilbert $\aB$-submodules of $L^2(\aA)$ such that $L^2(\aA) = \overline{\bigoplus}_{\pi \in \hat G} A(\pi)$.

{\bf About freeness.}
A C\Star-dynamical system $(\aA,G,\alpha)$ is called \emph{free} if the \emph{Ellwood map}
\begin{align*}
	\Phi\colon \ \aA \tensor_{\text{alg}} \aA \rightarrow C(G,\aA),
	\qquad
	\Phi(x\tensor y)(g):=x \alpha_g(y)
\end{align*}
has dense range with respect to the canonical C\Star-norm on $\Cont(G,\aA)$.
This condition was originally introduced for actions of quantum groups on C\Star-algebras by Ellwood~\cite{Ell00} and is known to be equivalent to Rieffel's saturatedness~\cite{Rieffel91} and the Peter--Weyl--Galois condition~\cite{BaCoHa15}.

One of the key tools used in this article is a characterization of freeness that we provided in~\cite[Lemma~3.3]{SchWa17c}, namely that a C\Star-dynamical system $(\aA,G,\alpha)$ is free if and only if for each irreducible representation $(\sigma,V_\sigma)$ of $G$ there is a finite-dimensional Hilbert space~$\hH_\sigma$ and an isometry $s(\sigma) \in \aA \tensor \End(V_\sigma,\hH_\sigma)$ satisfying $\alpha_g(s(\sigma))=s(\sigma) \cdot \sigma_g$ for all $g \in G$.
A rich class of free actions is given by so-called \emph{cleft} actions, which are characterized as follows:
For each irreducible representation $(\sigma,V_\sigma)$ of $G$ there is a unitary element \mbox{$u(\sigma) \in \aA \tensor \End(V_\sigma)$} such that $\alpha_g(u(\sigma))=u(\sigma) \cdot \sigma_g$ for all $g \in G$ (cf.~\cite[Definition~4.1]{SchWa17b}).

{\bf About 1-parameter groups.}
Let $\aA$ be a unital C\Star-algebra and let $(\varphi_t)_{t \in \R}$ be a 1\nobreakdash-parameter group of \Star-automorphisms $\varphi_t \in \Aut(\aA)$.
We typically use the letter $\aA^\infty$ to denote the smooth domain of $(\varphi_t)_{t \in \R}$, which is the set of elements $x \in \aA$ such that $\R \ni t \mapsto \varphi_t(x) \in \aA$ is smooth.
Moreover, we let
\begin{gather*}
	D \varphi_t \colon \ \aA^\infty \to \aA^\infty,
	\qquad
	D \varphi_t(x) := \lim_{t \to 0} \frac{\varphi_t(x)-x}{t}
\end{gather*}
stand for the corresponding \Star-derivation.

{\bf About derivations.}
Let $A$ be a unital \Star-algebra.
We let $\der(A)$ stand for the Lie algebra of \Star-derivations of $A$.
Furthermore, we write $A^\text{skew} \subseteq A$ for the subset of skew-adjoint elements, i.e., $A^{\text{skew}} := \{a \in A\colon a^* = -a \}$, and recall that each $a \in A^\text{skew}$ gives rise to a \Star-derivation defined by $A \ni x \mapsto [a,x] = ax -xa \in A$.

\section{Factor systems}\label{sec:facsys}

Let $(\aA,G,\alpha)$ be a free C\Star-dynamical system.
Furthermore, for each $\sigma \in \Irrep(G)$ let $\hH_\sigma$ be a finite-dimensional Hilbert space and let $s(\sigma) \in \aA \tensor \End(V_\sigma,\hH_\sigma)$ an isometry satisfying $\alpha_g\bigl(s(\sigma)\bigr)=s(\sigma) \cdot \sigma_g$ for all $g \in G$ (cf.~\cite[Lemma~3.3]{SchWa17c}).
For $1 \in \Irrep(G)$ we choose $\hH_1 := \C$ and let $s(1) := \one_\aA$.
A~key feature of $(\aA,G,\alpha)$ is the factor system associated with the isometries $s(\sigma)$, $\sigma \in \Irrep(G)$, (see~\cite[Definition~4.1]{SchWa17c}), which we now recall for the convenience of the reader.
First, we put $\aB := \aA^G$.
Second, for expediency, we naturally extend $\sigma \mapsto \hH_\sigma$ and $\sigma \mapsto s(\sigma)$ to arbitrary finite-dimensional representations $\sigma$ of $G$ by taking the direct sum with respect to irreducible subrepresentations.
For each finite-dimensional representation $\sigma$ of $G$ we may then define the \Star-homomorphism
\begin{gather*}
	\gamma_\sigma\colon \ \aB \to \aB \otimes \End(\hH_\sigma),
	\qquad
	\gamma_\sigma(b) := s(\sigma) (b \tensor \one_{V_\sigma}) s(\sigma)^*,
\end{gather*}
and for each pair $(\sigma,\pi)$ of finite-dimensional representations of $G$ the element
\begin{gather*}
	\omega(\sigma,\pi) := s(\sigma) s(\pi) s(\sigma \tensor \pi)^* \in \aB \tensor \End(\hH_{\sigma \tensor \pi},\hH_\sigma \tensor \hH_\pi).
\end{gather*}
Then the following relations hold:
\begin{gather}
	\label{eq:ranges_sys}
	\omega(\sigma,\pi)^* \omega(\sigma,\pi) = \gamma_{\sigma \tensor \pi}(\one_\aB),
	\qquad
	\omega(\sigma,\pi) \omega(\sigma,\pi)^* = \gamma_\sigma (\gamma_\pi(\one_\aB) ),
	\\
	\label{eq:coaction_sys}
	\omega(\sigma, \pi) \gamma_{\sigma \tensor \pi}(b) = \gamma_\sigma( \gamma_\pi(b) ) \omega(\sigma, \pi),
	\\
	\label{eq:cocycle_sys}
	\omega(\sigma, \pi) \omega(\sigma \tensor \pi, \rho)
		= \gamma_\sigma ( \omega(\pi, \rho) ) \omega(\sigma, \pi \tensor \rho)
\end{gather}
for all finite-dimensional representations $\sigma$, $\pi$, $\rho$ of $G$ and $b \in \aB$ (see~\cite[Lemma~4.3]{SchWa20}).
The triple $(\hH, \gamma, \omega)$ of the above families is referred to as the \emph{factor system of $(\aA, G, \alpha)$} associated with $s(\sigma)$, $\sigma \in \Irrep(G)$, or simply as a factor system of $(\aA, G, \alpha)$ when no explicit reference to the isometries is needed.

We recall from~\cite{SchWa17c} that, for $\sigma \in \Irrep(G)$, the isotypic component~$A(\sigma)$ of $\aA$ can be written as
\begin{gather*}
	A(\sigma) = \bigl\{ \Tr ( y s(\sigma) )\colon y \in \aB \tensor \End(\hH_\sigma, V_\sigma) \bigr\}.
\end{gather*}
In fact, the map $y \mapsto \Tr ( y s(\sigma) )$ is bijective from $\aB \tensor \End(\hH_\sigma, V_\sigma) \gamma_\sigma(\one_\aB)$ to $A(\sigma)$.
The action~$\alpha$ on the isotypic component takes the form
\begin{gather}
	\alpha_g \bigl( \Tr ( ys(\sigma) ) \bigr) = \Tr\bigl( \sigma_g \cdot ys(\sigma) \bigr)\label{eq:A_0action}
\end{gather}
for all $g \in G$ and $y \in \aB \tensor \End(\hH_\sigma,V_\sigma)$.
The multiplication between isotypic components can be written as
	\begin{gather}
		\Tr \bigl( y_\sigma s(\sigma) \bigr) \cdot \Tr \bigl( y_\pi s(\pi) \bigr)
		=
		\Tr \bigl( y_\sigma \gamma_\sigma(y_\pi) \omega(\sigma, \pi) s(\sigma \tensor \pi) \bigr)\label{eq:A_0mul}
	\end{gather}
	for all $\sigma, \pi \in \Irrep(G)$, $y_\sigma \in \aB \tensor \End(\hH_\sigma, V_\sigma)$, and $y_\pi \in \aB \tensor \End(\hH_\pi, V_\pi)$.
The element on the right hand side is, in fact, a sum of elements in the isotypic components corresponding to subrepresentations of $\sigma \tensor \pi$.
Hence equation~\eqref{eq:A_0mul} uniquely determines the multiplication on the dense \Star-subalgebra~$\aA_f$ of~$\aA$.
The involution can be phrased in terms of the factor system, too (see~\cite[Section~5]{SchWa17c} or~\cite[Lemma~2.4]{Ne13}).
In order to see this, let us fix $\sigma \in \Irrep(G)$, choose an isometric intertwiner $v_0\colon \C \to V_{\bar\sigma} \tensor V_\sigma$, and write $w_0\colon \C \to \hH_{\bar\sigma \tensor \sigma}$ for the associated isometry.
Then the element $p_\sigma := (\dim \sigma)^2 w_0^{}v_0^*$ does not depend on the choice of the intertwiner.
It can be shown that there are element $\bar y_1, \dots, \bar y_n \in \aB \tensor \End(\hH_{\bar\sigma}, V_{\bar\sigma})$ and $y_1, \dots, y_n \in \aB \tensor \End(\hH_\sigma, V_\sigma)$ such that
\begin{gather}
	\label{eq:involution_decomposition}
	p_\sigma = \sum_{k=1}^n \bar y_k \gamma_{\bar\sigma}(y_k) \omega(\bar \sigma, \sigma)
\end{gather}
and that, for each $y \in \aB \tensor \End(\hH_\sigma, V_\sigma)$, the element
\begin{gather*}
	J(y) := \frac{1}{\dim \sigma}\sum_{k=1}^n \bar y_k \gamma_{\bar\sigma} \bigl( \Tr(y_k y^*) \bigr) \in \aB \tensor \End(\hH_{\bar\sigma}, V_{\bar \sigma})
\end{gather*}
does not depend on the choice of $\bar y_1, \dots, \bar y_n$ and $y_1, \dots, y_n$.
The involution on $\aA_f$ then reads as
\begin{gather*}
	\Tr \bigl( y s(\sigma) \bigr)^* = \Tr \bigl( J(y) s(\sigma)^* \bigr)
\end{gather*}
for all $y \in \aB \tensor \End(\hH_\sigma, V_\sigma)$.

Noteworthily, the notion of a factor system of $(\aA, G, \alpha)$ only depends on the fixed point algebra $\aB$ and the group $G$, which leads to the following definition:

\begin{Definition}\label{def:conjugancy}
	Let $\aB$ be a unital C\Star-algebra and let $G$ be a compact group.
	\begin{enumerate}\itemsep=0pt
	\item 		
		A \emph{factor system for $(\aB,G)$} is a triple $(\hH, \gamma, \omega)$ comprising families of Hilbert~spaces~$\hH_\sigma$, $\sigma \in \Irrep(G)$, \Star-homomorphisms $\gamma_\sigma\colon \aB \to \aB \tensor \End(\hH_\sigma)$, $\sigma \in \Irrep(G)$, and elements $\omega(\sigma, \pi)$ in $\aB \tensor \End(\hH_{\sigma \tensor \pi}, \hH_\sigma \tensor \hH_\pi)$, $\sigma, \pi\in \Irrep(G)$, satisfying equations~\eqref{eq:ranges_sys},~\eqref{eq:coaction_sys},~\eqref{eq:cocycle_sys}, as well as the normalization conditions $\hH_1 = \C$, $\gamma_1 = \id_\aB$, and $\omega(1, \sigma) = \gamma_\sigma(\one_\aB) = \omega(\sigma, 1)$ for all $\sigma \in \Irrep(G)$.
	\item
		Two factor systems $(\hH, \gamma, \omega)$ and $(\hH', \gamma', \omega')$ for $(\aB,G)$ are called \emph{conjugated} if there are partial isometries $v(\sigma) \in \aB \tensor \End(\hH_\sigma,\hH'_\sigma)$, $\sigma \in \Irrep(G)$, normalized to $v(1) = \one_\aB$, such that
		\begin{gather*}
			\Ad[v(\sigma)] \circ \gamma_\sigma = \gamma_\sigma',
			\qquad
			\Ad[v(\sigma)^*] \circ \gamma_\sigma' = \gamma_\sigma,		\\
			v(\sigma) \gamma_\sigma\bigl( v(\pi) \bigr) \omega(\sigma, \pi)
			= \omega'(\sigma, \pi) v(\sigma \tensor \pi)
		\end{gather*}
		for all $\sigma, \pi \in \Irrep(G)$.
		In this case we write $(\hH',\gamma', \omega') = v(\hH, \gamma, \omega)v^*$ or simply $(\hH,\gamma, \omega) \sim (\hH', \gamma', \omega')$ when no reference to the partial isometries is needed.
	\end{enumerate}
Note that, above, we have implicitly used functorial versions of the families $\hH_\sigma$, $\gamma_\sigma$, $v(\sigma)$, and $\omega(\sigma, \pi)$, $\sigma, \pi \in \Irrep(G)$.
\end{Definition}

By construction, each factor system of $(\aA,G,\alpha)$ is a factor system for $(\aB,G)$ and, by \cite[Lemma~4.3]{SchWa17c}, all factor systems of $(\aA, G, \alpha)$ are conjugated. In fact, given any unital C\Star\nobreakdash-algebra~$\aB$ and any compact group~$G$, we have shown in~\cite[Section~5]{SchWa17c} that the equivalence classes of free C\Star-dynamical systems $(\aA, G, \alpha)$ with fixed point algebra~$\aB$ are in 1-to-1 correspondence with conjugacy classes of factor systems of $(\aB,G)$.

\begin{Remark}\label{rem:K0_factor_sys}
For a factor system $(\hH, \gamma, \omega)$ of $(\aA,G,\alpha)$ equations~\eqref{eq:ranges_sys} and~\eqref{eq:coaction_sys} suggest to look at the $K$-theory of~$\aB$ and the induce positive group homomorphisms $K_0(\gamma_\sigma)\colon K_0(\aB) \to K_0(\aB)$ for a finite-dimensional representation~$\sigma$ of~$G$.
	Indeed, these maps only depend on the conjugacy class of the factor system and thus amount to invariants for $(\aA, G, \alpha)$.
	In fact, the mapping $\sigma \mapsto K_0(\gamma_\sigma)$ constitutes a nice functor:
	For direct sums $\sigma \oplus \pi$ of representation $\sigma$, $\pi$ of~$G$, we obviously have $K_0(\gamma_{\sigma \oplus \pi}) = K_0(\gamma_\sigma) + K_0(\gamma_\pi)$. Moreover, by equation~\eqref{eq:coaction_sys}, we have
	\begin{gather*}
		K_0(\gamma_{\sigma \tensor \pi}) = K_0(\gamma_\sigma) \circ K_0(\gamma_\pi)
	\end{gather*}
	for all finite-dimensional representations $\sigma$, $\pi$ of $G$.
\end{Remark}

\section{Lifting an automorphism}\label{sec:lift_automorphism}

Let $(\aA, G, \alpha)$ be a free C\Star-dynamical system with fixed point algebra $\aB$ and let $\beta$ be a \Star-auto\-mor\-phism of~$\aB$.
In this section we address the question whether $\beta$ can be lifted to a \Star-automorphism~$\hat \beta$ of~$\aA$ that commutes with all~ $\alpha_g$, $g \in G$.
In the affirmative case we say that \emph{$\beta$ lifts to $\aA$} and that $\hat \beta$ is a \emph{lift of~$\beta$}.

To phrase our result we note that $\Aut(\aB)$ acts on the factor systems for $(\aB,G)$.
For $\beta \in \Aut(\aB)$ and a factor system $(\hH, \gamma, \omega)$ for $(\aB,G)$ we may define a new factor system $\big(\hH, \gamma^\beta, \omega^\beta\big)$ for $(\aB,G)$ by putting
\begin{gather*}
	\gamma^\beta_\sigma
	:= \beta \circ \gamma_\sigma \circ \beta^{-1},
	\qquad
	\omega^\beta(\sigma, \pi)
	:= \beta \bigl( \omega(\sigma, \pi) \bigr)
\end{gather*}
for all $\sigma, \pi \in \Irrep(G)$.
With this we give an answer to the above lifting problem by proving the following theorem:

\begin{Theorem}\label{thm:lift_conjugated_factor_sys}
	Let $(\aA, G, \alpha)$ be a free C\Star-dynamical system with fixed point algebra $\aB$ and let~$\beta$ be a \Star-automorphism of $\aB$.
	Then the following statements are equivalent:
	\begin{enumerate}\itemsep=0pt
	\item[$(a)$] $\beta$ lifts to $\aA$.
	\item[$(b)$] For any factor system $(\hH, \gamma, \omega)$ of $(\aA, G, \alpha)$ we have $(\hH, \gamma, \omega) \sim \big(\hH, \gamma^\beta, \omega^\beta\big)$.
	\end{enumerate}
\end{Theorem}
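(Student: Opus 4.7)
The plan is to pass through the structure theory of \cite{SchWa17c}, which identifies equivalence classes of free C\Star-dynamical systems with fixed point algebra $\aB$ and conjugacy classes of factor systems, and to read the existence of a lift $\hat\beta$ off the equality of factor systems realized inside $\aA$.

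For $(a) \Rightarrow (b)$, given a lift $\hat\beta$, consider the new family $\tilde s(\sigma) := (\hat\beta \tensor \id)(s(\sigma))$. Since $\hat\beta$ commutes with $\alpha$ and satisfies $\hat\beta|_\aB = \beta$, these are isometries with $\alpha_g(\tilde s(\sigma)) = \tilde s(\sigma) \sigma_g$, and a direct computation yields
\[
	\tilde s(\sigma)(b \tensor \one) \tilde s(\sigma)^* = \gamma_\sigma^\beta(b),
	\qquad
	\tilde s(\sigma) \tilde s(\pi) \tilde s(\sigma \tensor \pi)^* = \omega^\beta(\sigma, \pi).
\]
Thus $\tilde s(\sigma)$ realizes the factor system $(\hH, \gamma^\beta, \omega^\beta)$ inside $(\aA, G, \alpha)$, and since any two factor systems of a free system are conjugated by \cite[Lemma~4.3]{SchWa17c}, the conjugation $(\hH, \gamma, \omega) \sim (\hH, \gamma^\beta, \omega^\beta)$ follows.

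For the converse $(b) \Rightarrow (a)$, given partial isometries $v(\sigma)$ implementing the conjugation, I may assume after replacing $v(\sigma)$ by $v(\sigma)\gamma_\sigma(\one)$ that $v(\sigma)^*v(\sigma) = \gamma_\sigma(\one)$ and $v(\sigma)v(\sigma)^* = \gamma_\sigma^\beta(\one)$. Set $\tilde s(\sigma) := v(\sigma) s(\sigma)$. Then $\tilde s(\sigma)$ is an isometry in $\aA \tensor \End(V_\sigma, \hH_\sigma)$ with $\alpha_g(\tilde s(\sigma)) = \tilde s(\sigma) \sigma_g$, and the three conjugation relations of Definition~\ref{def:conjugancy}, together with the basic commutation $s(\sigma) x = \gamma_\sigma(x) s(\sigma)$ for $x \in \aB \tensor \End(\hH_\pi)$, show that its factor system is precisely $(\hH, \gamma^\beta, \omega^\beta)$. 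I then define $\hat\beta$ on the dense *-subalgebra $\aA_f$ by
\[
	\hat\beta(\Tr(y s(\sigma))) := \Tr(\beta(y) \tilde s(\sigma)), \qquad y \in \aB \tensor \End(\hH_\sigma, V_\sigma),
\]
which is consistent with $\hat\beta|_\aB = \beta$ thanks to the normalization $v(1) = \one_\aB$. Because $\tilde s$ carries the $\beta$-twisted factor system, both sides of $\hat\beta(ab) = \hat\beta(a)\hat\beta(b)$ reduce through \eqref{eq:A_0mul} to the same expression, and an analogous computation using the reconstruction formula \eqref{eq:involution_decomposition} yields $\hat\beta(a^*) = \hat\beta(a)^*$. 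On each isotypic component $A(\sigma)$ the map is essentially $\beta$ on coefficients, hence isometric in the natural $\aB$-valued inner product, so $\hat\beta$ extends uniquely to a bounded *-automorphism of $\aA$; it commutes with $\alpha$ by construction.

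The main obstacle is the detailed bookkeeping in the direction $(b) \Rightarrow (a)$: each of the three conjugation relations of Definition~\ref{def:conjugancy} enters at a different step in verifying that $\tilde s(\sigma)$ has factor system $(\hH, \gamma^\beta, \omega^\beta)$ and that the candidate $\hat\beta$ is a *-homomorphism, and the involution verification additionally requires that the reconstruction data in \eqref{eq:involution_decomposition} transform covariantly under $\beta$ and the twist $v$. Continuity and $\alpha$-equivariance, by contrast, fall out of the construction.
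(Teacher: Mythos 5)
Your proposal is correct and follows essentially the same route as the paper: the same $s^\beta(\sigma)=\hat\beta(s(\sigma))$ argument for (a)$\Rightarrow$(b) via conjugacy of all factor systems of $(\aA,G,\alpha)$, and for (b)$\Rightarrow$(a) the identical lifting formula $\hat\beta(\Tr(ys(\sigma))):=\Tr(\beta(y)v(\sigma)s(\sigma))$ of equation~\eqref{eq:beta}, with the same verifications of multiplicativity via~\eqref{eq:A_0mul}, involutivity via~\eqref{eq:involution_decomposition}, and isometry of the $\aB$-valued inner product. The only step you compress is the final extension, where the paper passes through the unitary implementation on $L^2(\aA)$ to upgrade the $\aB$-valued isometry on $\aA_f$ to a C\Star-norm-preserving \Star-automorphism of $\aA$; that is the precise content behind your ``extends uniquely to a bounded \Star-automorphism.''
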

\begin{Remark}
	By the above theorem, a necessary condition for $\beta$ to lift to $\aA$ is that $K_0\big(\gamma_\sigma^\beta\big) = K_0(\gamma_\sigma)$ for all $\sigma \in \Irrep(G)$ or, equivalently, that $K_0(\gamma_\sigma)$ commutes with $K_0(\beta)$ for all $\sigma \in \Irrep(G)$ (cf.~Remark~\ref{rem:K0_factor_sys}).
	In particular, $K_0(\beta)$ is needs to fix the characteristic classes $[\gamma_\sigma(\one_\aB)] \in K_0(\aB)$, $\sigma \in \Irrep(G)$.
\end{Remark}

We split the proof of Theorem~\ref{thm:lift_conjugated_factor_sys} into a sequence of lemmas.
For a start we fix, for each $\sigma \in \Irrep(G)$, a finite-dimensional Hilbert space $\hH_\sigma$ and an isometry $s(\sigma)\in \aA \tensor \End(V_\sigma,\hH_\sigma)$ satisfying $\alpha_g (s(\sigma) )=s(\sigma) \cdot \sigma_g$ for all $g \in G$; for $1 \in \Irrep(G)$ we take $\hH_1 := \C$ and $s(1) := \one_\aA$.
As in Section~\ref{sec:facsys}, we write $(\hH, \gamma, \omega)$ for the associated factor system.
Our first result establishes the implication ``(a)$\Rightarrow$(b)'' of Theorem~\ref{thm:lift_conjugated_factor_sys}:

\begin{Lemma}\label{lem:conjugated}
	If $\beta$ lifts to $\aA$, then $(\hH, \gamma, \omega)$ and $\big(\hH, \gamma^\beta, \omega^\beta\big)$ are conjugated.
\end{Lemma}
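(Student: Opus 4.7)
The plan is to transport the chosen isometries $s(\sigma)$ by the lift $\hat\beta$ and observe that the resulting new isometries produce precisely the factor system $\bigl(\hH, \gamma^\beta, \omega^\beta\bigr)$. Conjugacy then follows from the already established fact that any two factor systems of the same free C\Star-dynamical system are conjugated.

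In detail, I would first fix a \Star-automorphism $\hat\beta \in \Aut(\aA)$ with $\hat\beta|_\aB = \beta$ and $\hat\beta \circ \alpha_g = \alpha_g \circ \hat\beta$ for all $g \in G$, and define $s'(\sigma) := (\hat\beta \tensor \id_{\End(V_\sigma,\hH_\sigma)})(s(\sigma))$ for each $\sigma \in \Irrep(G)$, with $s'(1) = \one_\aA$. Since $\hat\beta$ is a \Star-automorphism, $s'(\sigma)$ is still an isometry in $\aA \tensor \End(V_\sigma, \hH_\sigma)$. Moreover, its equivariance
\begin{gather*}
	\alpha_g\bigl(s'(\sigma)\bigr)
	= \hat\beta\bigl(\alpha_g(s(\sigma))\bigr)
	= \hat\beta\bigl(s(\sigma)\bigr) \cdot \sigma_g
	= s'(\sigma) \cdot \sigma_g
\end{gather*}
follows immediately from $\hat\beta \circ \alpha_g = \alpha_g \circ \hat\beta$, so the family $s'(\sigma)$, $\sigma \in \Irrep(G)$, is admissible in the sense of~\cite[Lemma 3.3]{SchWa17c} and gives rise to a factor system $(\hH, \gamma', \omega')$ of $(\aA, G, \alpha)$.

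The key calculation is to identify $(\hH, \gamma', \omega')$ with $\bigl(\hH, \gamma^\beta, \omega^\beta\bigr)$. For $b \in \aB$ write $b = \beta(\beta^{-1}(b))$; then, using that $\hat\beta \otimes \id$ is a \Star-homomorphism and that $\hat\beta|_\aB = \beta$,
\begin{gather*}
	\gamma'_\sigma(b)
	= s'(\sigma)\bigl(b \tensor \one_{V_\sigma}\bigr) s'(\sigma)^*
	= \hat\beta\bigl( s(\sigma)(\beta^{-1}(b) \tensor \one_{V_\sigma}) s(\sigma)^* \bigr)
	= \beta\bigl( \gamma_\sigma(\beta^{-1}(b)) \bigr)
	= \gamma^\beta_\sigma(b),
\end{gather*}
and analogously $\omega'(\sigma, \pi) = \hat\beta\bigl(s(\sigma) s(\pi) s(\sigma \tensor \pi)^*\bigr) = \beta(\omega(\sigma, \pi)) = \omega^\beta(\sigma, \pi)$. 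Hence $\bigl(\hH, \gamma', \omega'\bigr) = \bigl(\hH, \gamma^\beta, \omega^\beta\bigr)$ is indeed a factor system of $(\aA, G, \alpha)$, and by \cite[Lemma 4.3]{SchWa17c} it is conjugated to the original factor system $(\hH, \gamma, \omega)$.

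There is essentially no obstacle: the only point requiring care is the bookkeeping that $\hat\beta$ restricts to $\beta$ on $\aB$ and the extension $\hat\beta \tensor \id$ maps $\aB \tensor \End(\hH_\sigma)$ to itself, so that the defining formulas of $\gamma_\sigma$ and $\omega(\sigma,\pi)$ transform into those of $\gamma_\sigma^\beta$ and $\omega^\beta(\sigma,\pi)$ exactly. The existence of the conjugating partial isometries $v(\sigma)$ is then not produced by hand but inherited from the general conjugacy result for factor systems of a fixed free C\Star-dynamical system.
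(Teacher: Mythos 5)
Your proposal is correct and follows essentially the same route as the paper's proof: transport the isometries $s(\sigma)$ by the lift $\hat\beta$, verify equivariance, identify the resulting factor system with $\bigl(\hH, \gamma^\beta, \omega^\beta\bigr)$, and conclude by the general fact that all factor systems of $(\aA, G, \alpha)$ are conjugated. The only difference is that you spell out the computations for $\gamma'_\sigma$ and $\omega'(\sigma,\pi)$ which the paper leaves as "readily checked."
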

\begin{proof}
	Let $\hat \beta$ be a lift of $\beta$.
	For each $\sigma \in \Irrep(G)$ we put $s^\beta(\sigma) := \hat \beta ( s(\sigma) )$ and note that $s^\beta(\sigma)$ is an isometry in $\aA \tensor \End(V_\sigma,\hH_\sigma)$ satisfying $\alpha_g\big(s^\beta(\sigma)\big)=s^\beta(\sigma) \cdot \sigma_g$ for all $g \in G$.
	Clearly, $s^\beta(1) = \one_\aA$.
	Furthermore, it is readily checked that the associated factor system is equal to $\big(\hH, \gamma^\beta, \omega^\beta\big)$.
	The~claim thus follows from the fact that all factor systems of $(\aA, G, \alpha)$ are conjugated.
\end{proof}

The task is now to prove the converse implication, ``(b)$\Rightarrow$(a)'', of Theorem~\ref{thm:lift_conjugated_factor_sys}.
For this purpose, we consider partial isometries $v(\sigma) \in \aB \tensor \End(\hH_\sigma)$, $\sigma \in \Irrep(G)$, normalized to $v(1) = \one_\aB$, such that $ \big(\hH, \gamma^\beta, \omega^\beta\big) = v (\hH, \gamma, \omega) v^*$.
Then for each $\sigma \in \Irrep(G)$ we obtain a well-defined map on the isotypic component $A(\sigma)$ by putting
\begin{gather}\label{eq:beta}
	\hat\beta_\sigma \bigl( \Tr (y s(\sigma)) \bigr) := \Tr \bigl( \beta(y) v(\sigma) s(\sigma) \bigr)
\end{gather}
for all $y\in \aB \tensor \End(\hH_\sigma, V_\sigma)$.
Taking direct sums gives a map~$\hat\beta$ on the dense \Star-subalgebra~$\aA_f$.
Due to the normalizations $v(1) = \one_\aB$ and $s(1) = \one_\aA$, we have $\hat \beta(b) = \beta(b)$ for all $b \in \aB$.
That~is, $\hat \beta$ extends $\beta$.
Furthermore, a few moments of thought show that $\hat \beta$ is bijective.
In fact, its inverse is given by the direct sum of the maps $\hat\beta^{-1}\colon A(\sigma) \to A(\sigma)$, $\sigma \in \Irrep(G)$, defined by
\[
\hat\beta^{-1}_\sigma \bigl(\Tr ( y s(\sigma)) \bigr)
	= \Tr \bigl( \beta^{-1} (y v(\sigma)^*) s(\sigma) \bigr)
\] for all $y \in \aB \tensor \End(\hH_\sigma, V_\sigma)$.
We proceed to establish further properties.

\begin{Lemma}\label{lem:prop_beta}
	The following assertions hold for the map $\hat \beta$ on $\aA_f$:
	\begin{enumerate}\itemsep=0pt
	\item[$1.$]
		$\hat\beta \circ \alpha_g = \alpha_g \circ \hat \beta$ for all $g \in G$.
	\item[$2.$]
		$\hat\beta$ is multiplicative.
	\item[$3.$]
		$\hat\beta$ is involutive.
	\end{enumerate}
\end{Lemma}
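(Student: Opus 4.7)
The plan is to verify each of the three assertions in turn on a generic element $\Tr(ys(\sigma))$ with $y \in \aB \tensor \End(\hH_\sigma, V_\sigma)$, using the defining formula~\eqref{eq:beta} together with the explicit expressions for the action~\eqref{eq:A_0action}, the product~\eqref{eq:A_0mul}, and the involution from Section~\ref{sec:facsys}. The algebraic input is supplied entirely by the two conjugacy conditions on the $v(\sigma)$: the identity $\Ad[v(\sigma)] \circ \gamma_\sigma = \gamma^\beta_\sigma = \beta \circ \gamma_\sigma \circ \beta^{-1}$ and the cocycle identity $v(\sigma) \gamma_\sigma(v(\pi)) \omega(\sigma,\pi) = \beta(\omega(\sigma,\pi)) v(\sigma \tensor \pi)$.

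Assertion~1 is the quickest. Since $\beta$ acts only on the $\aB$-factor, it commutes with left multiplication by $\sigma_g \in \End(V_\sigma)$ and by $v(\sigma) \in \aB \tensor \End(\hH_\sigma)$. Substituting~\eqref{eq:A_0action} and~\eqref{eq:beta} in either order yields the common value $\Tr(\sigma_g \cdot \beta(y) v(\sigma) s(\sigma))$.

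For assertion~2, I would expand $\hat\beta\bigl(\Tr(y_\sigma s(\sigma)) \cdot \Tr(y_\pi s(\pi))\bigr)$ via~\eqref{eq:A_0mul} and then~\eqref{eq:beta}, obtaining $\Tr\bigl(\beta(y_\sigma) \beta(\gamma_\sigma(y_\pi)) \beta(\omega(\sigma,\pi)) v(\sigma \tensor \pi) s(\sigma \tensor \pi)\bigr)$, and conversely expand $\hat\beta(\Tr(y_\sigma s(\sigma))) \cdot \hat\beta(\Tr(y_\pi s(\pi)))$ via~\eqref{eq:beta} and then~\eqref{eq:A_0mul}, obtaining $\Tr\bigl(\beta(y_\sigma) v(\sigma) \gamma_\sigma(\beta(y_\pi)) \gamma_\sigma(v(\pi)) \omega(\sigma,\pi) s(\sigma \tensor \pi)\bigr)$. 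Inserting the support projection $\gamma_\sigma(\one_\aB) = v(\sigma)^* v(\sigma)$ and applying the $\Ad$-identity converts $v(\sigma) \gamma_\sigma(\beta(y_\pi))$ into $\beta(\gamma_\sigma(y_\pi)) v(\sigma)$; the cocycle identity then turns the remaining $v(\sigma) \gamma_\sigma(v(\pi)) \omega(\sigma,\pi)$ into $\beta(\omega(\sigma,\pi)) v(\sigma \tensor \pi)$, matching the two expressions.

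Assertion~3 is the main obstacle. Using $\Tr(y s(\sigma))^* = \Tr(J(y) s(\bar\sigma))$, the claim $\hat\beta(x^*) = \hat\beta(x)^*$ on $A(\sigma)$ reduces to the pointwise identity $J(\beta(y) v(\sigma)) = \beta(J(y)) v(\bar\sigma)$. Since $p_\sigma$ is defined intrinsically from intertwiners and is therefore fixed by $\beta$, a decomposition~\eqref{eq:involution_decomposition} $p_\sigma = \sum_k \bar y_k \gamma_{\bar\sigma}(y_k) \omega(\bar\sigma, \sigma)$ in $(\hH, \gamma, \omega)$ can be transported through $\beta$ and the $v$'s into a decomposition of $p_\sigma$ in the conjugated factor system $\big(\hH, \gamma^\beta, \omega^\beta\big)$. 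Because the $J$-formula depends only on the factor system and not on the particular decomposition, this produces the desired relation between $J(\beta(y) v(\sigma))$ and $\beta(J(y))$. The delicate point is tracking the partial-isometry factors through the inner $\Tr(y_k y^*)$ in the definition of $J$ and verifying that the source and range projections $v(\sigma)^* v(\sigma) = \gamma_\sigma(\one_\aB)$ and $v(\bar\sigma) v(\bar\sigma)^* = \gamma^\beta_{\bar\sigma}(\one_\aB)$ conspire to yield exactly the factor $v(\bar\sigma)$ on the right.
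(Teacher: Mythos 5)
Your plan coincides with the paper's proof in all three parts: part~1 by direct substitution, part~2 by inserting the support projection and applying the $\Ad$- and cocycle identities to match the two expansions, and part~3 by transporting the decomposition~\eqref{eq:involution_decomposition} via $\bar y_k \mapsto \beta(\bar y_k)v(\bar\sigma)$, $y_k \mapsto \beta(y_k)v(\sigma)$ and exploiting the independence of $J$ from the chosen decomposition. The only slight imprecision is that the transported elements should be checked to satisfy~\eqref{eq:involution_decomposition} with respect to the \emph{original} $(\gamma,\omega)$ (which is what the $J$-formula for the involution on $\aA_f$ requires), rather than yielding a decomposition in the conjugated system; this is what the paper verifies, and it is equivalent to your formulation after applying $\beta^{-1}$ and the conjugacy relations.
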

\begin{proof}1.~This is immediate from equations~\eqref{eq:A_0action} and~\eqref{eq:beta}.

2.~Let $\sigma, \pi \in \Irrep(G)$, let $x_\sigma = \Tr (y_\sigma s(\sigma)) \in A(\sigma)$ for some $y_\sigma \in \aB \tensor \End(\hH_\sigma, V_\sigma)$, and let $x_\pi = \Tr ( y_\pi s(\pi) ) \in A(\pi)$ for some $y_\pi \in \aB \tensor \End(\hH_\pi, V_\pi)$.
		Then
		\begin{align*}
			\hat\beta(x_\sigma) \cdot \hat\beta(x_\pi)
			&=
			\hat\beta \bigl( \Tr( y_\sigma s(\sigma) ) \bigr)
			\cdot \hat \beta \bigl( \Tr (y_\pi s(\pi) ) \bigr)
			\\
			&=
			\Tr \bigl( \beta(y_\sigma) v(\sigma) s(\sigma) \bigr)
			\cdot \id \tensor \Tr \bigl( \beta(y_\pi) v(\pi) s(\pi) \bigr)
			\\
			&=
			\Tr \bigl( \beta(y_\sigma) v(\sigma) \gamma_\sigma ( \beta(y_\pi) v(\pi) ) \omega(\sigma, \pi) s(\sigma \tensor \pi) \bigr).
		\end{align*}
		Using the conjugacy equations in Definition~\ref{def:conjugancy} now yields
		\begin{align*}
			\hat\beta(x_\sigma) \cdot \hat\beta(x_\pi)
			&=
			\Tr \bigl( \beta(y_\sigma) \gamma^\beta_\sigma ( \beta(y_\pi) ) v(\sigma) \gamma_\sigma ( v(\pi) ) \omega(\sigma, \pi) s(\sigma \tensor \pi) \bigr)
			\\
			&= \Tr \bigl( \beta ( y_\sigma \gamma_\sigma(y_\pi) ) \omega^\beta(\sigma, \pi) v(\sigma \tensor \pi) s(\sigma \tensor \pi) \bigr)
			\\
			&= \Tr \bigl( \beta ( y_\sigma \gamma_\sigma(y_\pi) \omega(\sigma, \pi) ) v(\sigma \tensor \pi) s(\sigma \tensor \pi) \bigr)
			=
			\hat\beta(x_\sigma \cdot x_\pi),
		\end{align*}
		which establishes that $\hat\beta$ is multiplicative.

3.~Let $\sigma \in \Irrep(G)$.
		To deal with the involution, we choose $\bar y_1, \dots \bar y_n \in \aB \tensor \End(\hH_{\bar\sigma}, V_{\bar\sigma})$ and $y_1, \dots, y_n \in \aB \tensor \End(\hH_\sigma, V_\sigma)$ satisfying equation~\eqref{eq:involution_decomposition} (cf.~Section~\ref{sec:facsys}).
		Now, let $x = \Tr ( y s(\sigma) )$ be in $A(\sigma)$ for some $y \in \aB \tensor \End(\hH_\sigma, V_\sigma)$.
		Then
		\begin{align*}
			\hat\beta(x^*)
			&= \hat\beta \bigl( \Tr ( J(y) s(\bar \sigma) ) \bigr)
			= \frac{1}{\dim \sigma} \sum_{k=1}^n \hat\beta \bigl( \Tr ( \bar y_k \gamma_{\bar\sigma} (\Tr(y_k y^*) ) s(\bar \sigma) ) \bigr)
			\\
			&=
			\frac{1}{\dim \sigma} \sum_{k=1}^n \Tr \bigl( \beta ( \bar y_k \gamma_{\bar \sigma} ( \Tr(y_k y^*) ) ) v(\bar \sigma) s(\bar\sigma) \bigr).
		\end{align*}
		It is easily checked that $\bar y_k^\beta := \beta(\bar y_k) v(\bar\sigma)$ and $y_k^\beta := \beta(y_k)v(\sigma)$, $1 \le k \le n$, also satisfy equation~\eqref{eq:involution_decomposition}.
		Since there is no loss of generality in assuming $y = y \gamma_\sigma(\one_\aB)$, we thus get
		\begin{align*}
			\hat\beta(x)^*
			&= \Tr ( \beta(y) v(\sigma) s(\sigma) )^*
			= \Tr \bigl( J ( \beta(y) v(\sigma) ) s(\bar\sigma) \bigr)
			\\
			&=
			\frac{1}{\dim \sigma} \sum_{k=1}^n \Tr \bigl( \beta(\bar y_k) v(\bar\sigma) \gamma_{\bar\sigma} ( \Tr ( \beta(y_k) v(\sigma) v(\sigma)^* \beta(y^*) ) ) s(\bar \sigma) \bigr)
			\\
			&=
			\frac{1}{\dim \sigma} \sum_{k=1}^n \Tr \bigl( \beta(\bar y_k) v(\bar\sigma) \gamma_{\bar\sigma} ( \beta \tensor \Tr( y_k y^*) ) s(\bar \sigma) \bigr).
		\end{align*}
		Invoking the conjugacy equations in Definition~\ref{def:conjugancy} finally gives
		\begin{align*}
			\hat\beta(x)^*
			&=
			\frac{1}{\dim \sigma} \sum_{k=1}^n \Tr \bigl( \beta(\bar y_k) \gamma^\beta_{\bar\sigma} ( \beta \tensor \Tr( y_k y^*) ) v(\bar\sigma) s(\bar \sigma) \bigr)
			\\
			&=
			\frac{1}{\dim \sigma} \sum_{k=1}^n \Tr \bigl( \beta ( \bar y_k) \gamma_{\bar\sigma}( \Tr( y_k y^*) ) v(\bar\sigma) s(\bar \sigma)\bigr)
			= \hat\beta(x^*),
		\end{align*}
		which shows that $\hat \beta$ is involutive.
\end{proof}

\begin{Lemma}\label{lem:beta_iso}
	We have $\rprod{\aB}{\hat\beta(x_1), \hat\beta(x_2)} = \beta\bigl( \rprod{\aB}{x_1, x_2} \bigr)$ for all $x_1, x_2 \in \aA_f$.
\end{Lemma}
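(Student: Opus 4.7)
The plan is to reduce the identity to a statement about the conditional expectation $P_1$ and the action of $\hat\beta$ on individual isotypic components. Recall that for $x_1, x_2 \in \aA_f$ the inner product is given by $\rprod{\aB}{x_1, x_2} = P_1(x_1^* x_2)$. By parts~2 and~3 of Lemma~\ref{lem:prop_beta}, $\hat\beta$ is a \Star-homomorphism on $\aA_f$, so
\begin{gather*}
	\hat\beta(x_1)^* \hat\beta(x_2) = \hat\beta(x_1^* x_2).
\end{gather*}
Hence the claim reduces to showing that $P_1 \circ \hat\beta = \beta \circ P_1$ on $\aA_f$.

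Since $\aA_f$ is the algebraic direct sum of the isotypic components $A(\sigma)$, $\sigma \in \Irrep(G)$, it suffices to verify this equality on each summand. First I would check that $\hat\beta$ preserves isotypic components: for $\sigma \in \Irrep(G)$ and $x = \Tr(y s(\sigma)) \in A(\sigma)$, the defining formula~\eqref{eq:beta} gives $\hat\beta(x) = \Tr(\beta(y) v(\sigma) s(\sigma))$, which again lies in $A(\sigma)$ because $\beta(y) v(\sigma) \in \aB \tensor \End(\hH_\sigma, V_\sigma)$ and $s(\sigma)$ transforms as $\sigma$ under $\alpha$. (This also follows abstractly from part~1 of Lemma~\ref{lem:prop_beta}, since isotypic components are the joint spectral subspaces of the $\alpha_g$.)

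Consequently, for $\sigma \neq 1$ and $x \in A(\sigma)$ we have $P_1(x) = 0$ and $\hat\beta(x) \in A(\sigma)$, so $P_1(\hat\beta(x)) = 0 = \beta(P_1(x))$. For $\sigma = 1$, an element $x \in A(1) = \aB$ satisfies $P_1(x) = x$ and $\hat\beta(x) = \beta(x)$ (by the normalizations $v(1) = \one_\aB$ and $s(1) = \one_\aA$), so $P_1(\hat\beta(x)) = \beta(x) = \beta(P_1(x))$. Summing over the isotypic components proves $P_1 \circ \hat\beta = \beta \circ P_1$ on $\aA_f$, and hence the desired identity.

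No serious obstacle is expected here; the statement is essentially a bookkeeping consequence of the previous lemma together with the fact that $\hat\beta$ respects the isotypic decomposition and extends $\beta$ on the fixed point algebra.
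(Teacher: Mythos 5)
Your proposal is correct and follows essentially the same route as the paper: both rest on Lemma~\ref{lem:prop_beta} to write $\hat\beta(x_1)^*\hat\beta(x_2) = \hat\beta(x_1^*x_2)$ and then observe that only the trivial isotypic component survives $P_1$, where $\hat\beta$ agrees with $\beta$. Your repackaging of this as the intertwining relation $P_1 \circ \hat\beta = \beta \circ P_1$ (justified by $\hat\beta$ preserving the isotypic decomposition) is just a slightly tidier organization of the same argument.
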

\begin{proof}
	Since isotypic components are pairwise orthogonal, it suffices to show $\rprod{\aB}{\hat\beta(x_1), \hat\beta(x_2)} \allowbreak = \beta\bigl( \rprod{\aB}{x_1, x_2} \bigr)$ for all $x_1, x_2 \in A(\sigma)$ and $\sigma \in \Irrep(G)$.
	To this end, let $\sigma \in \Irrep(G)$ and let $x_1, x_2 \in A(\sigma)$.
	By Lemma~\ref{lem:prop_beta}, we obtain
	\begin{align*}
		\rprod{\aB}{\hat\beta(x_1), \hat\beta(x_2)}
		&=
		P_1\big( \hat\beta(x_1)^*\hat\beta(x_2) \big)
		=
		P_1\big( \hat\beta(x_1^*x_2) \big).
	\end{align*}
	We now decompose $x_1^*x_2$ as $\sum_{i \in I} \Tr( y_i s(\sigma_i) )$ for some mutually distinct representations $\sigma_i \in \Irrep(G)$ and $y_i \in \aB \tensor \End(\hH_{\sigma_i},V_{\sigma_i})$ and note that there is $i_0 \in I$ such that $\sigma_{i_0} = 1$, which is due to the fact that $\sigma \tensor \bar \sigma$ contains $1$ as a subrepresentation.
	Hence
	\begin{gather*}
		P_1\big( \hat\beta(x_1^*x_2) \big)
		=
		P_1\big( \beta(y_{i_0}) \big)
		=
		\beta\big( P_1(y_{i_0}) \big)
		=
		\beta\big( P_1(x_1^*x_2) \big)
		=
		\beta\bigl( \rprod{\aB}{x_1, x_2} \bigr).
		\tag*{\qed}
	\end{gather*}
	\renewcommand{\qed}{}
\end{proof}

By Lemma~\ref{lem:beta_iso}, the bijectivity of $\hat \beta$, and the fact that $\aA_f$ is dense in $L^2(\aA)$, we may extend~$\hat \beta$ to a unitary map, let's say, $U$ on $L^2(\aA)$.
Consequently, there is a \Star-automorphism on~$\aA$, for which we shall use the same letter $\hat \beta$ by a slight abuse of notation, such that
\begin{align*}
	\lambda\big( \hat \beta(x) \big) = U \lambda(x) U^*
\end{align*}
for all $x \in \aA$.
It is easily checked that $\hat \beta$ extends $\beta$ and commutes with all $\alpha_g$, $g\in G$.
Summarizing, we have shown the implication ``(b)$\Rightarrow$(a)'' of Theorem~\ref{thm:lift_conjugated_factor_sys}, which concludes the proof of this theorem.

We now turn to an application of our findings:
The group
\begin{gather*}
	\Aut_G(\aA) := \bigl\{ \varphi \in \Aut(\aA)\colon \alpha_g \circ \varphi = \varphi\circ \alpha_g \quad \forall g \in G \bigr\}
\end{gather*}
admits a short exact sequence
\begin{gather}\label{eq:AtiyahNCgroup}
	1 \longrightarrow \Gau(\aA) \longrightarrow \Aut_G(\aA)\longrightarrow \Aut(\aB)_{[\aA]} \longrightarrow 1,
\end{gather}
where
\begin{gather*}
	\Gau(\aA) := \bigl\{\varphi \in \Aut_G(\aA)\colon \varphi_{\mid \aB} = \id_\aB \bigr\}
\end{gather*}
is the group of \emph{gauge transformations} of $(\aA, G, \alpha)$ and $\Aut(\aB)_{[\aA]} \subseteq \Aut(\aB)$ is the group of \Star-automorphisms that lift to $\aA$.
Theorem~\ref{thm:lift_conjugated_factor_sys} states that $\Aut(\aB)_{[\aA]}$ can be characterized in terms of a factor system $(\hH, \gamma, \omega)$ of $(\aA, G, \alpha)$ as
\begin{gather*}
	\Aut(\aB)_{[\aA]} = \bigl\{ \beta \in \Aut(\aB)\colon (\hH,\gamma, \omega) \sim \big(\hH, \gamma^\beta, \omega^\beta\big) \bigr\}.
\end{gather*}
Looking at equation~\eqref{eq:beta}, we easily see that different choices of $v(\sigma)$, $\sigma \in \Irrep(G)$, amount to different lifts.
Hence the construction, in fact, shows that $\Gau(\aA)$ is isomorphic to the group
\begin{gather*}
	U(\hH, \gamma, \omega) := \bigl\{ u = (u(\sigma))_{\sigma \in \Irrep(G)}\colon u(\hH, \gamma, \omega)u^* = (\hH, \gamma, \omega) \bigr\},
\end{gather*}
which consists of all families of unitaries $u(\sigma) \in \gamma_\sigma(\one_\aB) \big( \aB \tensor \End(\hH_\sigma) \big) \gamma_\sigma(\one_\aB)$, $\sigma \in \Irrep(G)$, that lie in the commutant of $\gamma_\sigma(\aB)$ and satisfy the equation
\begin{gather*}
	u(\sigma) \gamma_\sigma ( u(\pi)) \omega(\sigma, \pi) = \omega(\sigma, \pi) u(\sigma \tensor \pi)
\end{gather*}
for all $\sigma, \pi \in \Irrep(G)$.

\section{The special case of a compact Abelian structure group}\label{sec:cpt_abelian}

Let $G$ be a compact Abelian group, let $(\aA, G, \alpha)$ be a free C\Star-dynamical system with fixed point algebra $\aB$, and let $\beta$ be a \Star-automorphism of $\aB$.
In the previous section we showed in Theorem~\ref{thm:lift_conjugated_factor_sys} that $\beta$ lifts to $\aA$ if and only if $(\hH,\gamma, \omega) \sim \big(\hH, \gamma^\beta, \omega^\beta\big)$ for any factor system $(\hH, \gamma, \omega)$ of $(\aA, G, \alpha)$.
In this section we give another characterization of when $\beta$ lifts to $\aA$ in terms of second group cohomology on the dual group $\hat G := \Hom(G,\mathbb{T})$ with values in the group $\mathcal{U}Z(\aB)$ of unitaries in the center of $\aB$.

To begin with, let us fix, for each $\sigma \in \hat G$, a finite-dimensional Hilbert space $\hH_\sigma$ and an isometry $s(\sigma)\in \aA \tensor \End(\C,\hH_\sigma)$ satisfying $\alpha_g(s(\sigma)) = \sigma(g) \cdot s(\sigma)$ for all $g \in G$.
For the trivial character, denoted by $0$, we choose $\hH_0 := \C$ and $s(0) := \one_\aA$.
Just as before, we write $(\hH, \gamma, \omega)$ for the associated factor system.
We shall also make use of the so-called \emph{Fr\"ohlich map} \mbox{$\Delta\colon \hat G \to \Aut ( \mathcal{U}Z(\aB) )$} associated with $(\aA, G, \alpha)$, which is for each $\sigma \in \hat G$ defined by restricting the map
\begin{gather*}
	\Delta_\sigma\colon \ \aB \to \aB,
	\qquad \Delta_\sigma(b) := s(\sigma)^* b s(\sigma).
\end{gather*}
We point out that, since all factor systems of $(\aA, G, \alpha)$ are conjugated, the Fr\"ohlich map does not depend on the choice of the factor system.

Given a lift $\hat \beta$ of $\beta$, we have seen in the proof of Lemma~\ref{lem:conjugated} that $v(\sigma) := \hat \beta\big( s(\sigma) \big) s(\sigma)^*$, $\sigma \in \hat G$, are partial isometries in $\aB \tensor \End(\hH_\sigma)$, $\sigma \in \hat G$, normalized to $v(0)=\one_\aB$, such~that
\begin{gather}	\label{eq:comm_gamma}
	\gamma^\beta_\sigma = \Ad[v(\sigma)] \circ \gamma_\sigma,
	\qquad
	\gamma_\sigma = \Ad[v(\sigma)^*] \circ \gamma_\sigma^\beta,
	\\
	v(\sigma) \gamma_\sigma( v(\pi)) \omega(\sigma, \pi)
	= \omega^\beta(\sigma, \pi) v(\sigma + \pi)
	\label{eq:comm_omega}
\end{gather}
for all $\sigma, \pi \in \hat G$.
A moment's thought shows that equation~\eqref{eq:comm_omega} can be rewritten as
\begin{gather*}
	s(\sigma + \pi)^* \beta^{-1}\big( v(\sigma + \pi) \omega(\sigma,\pi)^* \gamma_\sigma\big(v(\pi)^*\big) v(\sigma)^*\big) s(\sigma) s(\pi) = \one_\aB.
\end{gather*}

Our objective is now to give a group cohomological interpretation of the latter equation.
For~this purpose, let us for a moment assume that for all $\sigma \in \hat G$ the \Star-homomorphisms~$\gamma_\sigma$ and~$\gamma_\sigma^\beta$ are conjugated, i.e., there is a partial isometry $v(\sigma) \in \aB \tensor \End(\hH_\sigma)$ satisfying equation~\eqref{eq:comm_gamma}.
We freely use the fact that there is no loss of generality in assuming that $\gamma_\sigma(\one_\aB)$ and $\gamma_\sigma^\beta(\one_\aB)$ are the initial and final projections of~$v(\sigma)$, respectively.
Let us now consider the map $u\colon \hat G \times \hat G \to \aB$ defined by
\begin{gather}\label{eq:unitary_central_element}
	u(\sigma, \pi)
	:=
	s(\sigma + \pi)^* \beta^{-1}\big( v(\sigma + \pi) \omega(\pi,\sigma)^* \gamma_\pi\big(v(\sigma)^*\big) v(\pi)^*\big) s(\pi) s(\sigma).
\end{gather}

\begin{Lemma}\label{lem:unitary_central_cocycle}
	The following assertions hold:
	\begin{enumerate}\itemsep=0pt
	\item[$1.$]
		$u(\sigma, \pi)$ is central in $\aB$ for all $\sigma, \pi \in \hat G$.
	\item[$2.$]
		$u(\sigma, \pi)$ is unitary for all $\sigma, \pi \in \hat G$.
	\item[$3.$]
		$u$ constitutes a $2$-cocycle, i.e., for all $\sigma, \pi, \rho \in \hat G$ it satisfies
		\begin{gather*}
			u(\sigma + \pi, \rho) u(\sigma, \pi)
			= u(\sigma, \pi + \rho) \Delta_\sigma ( u(\pi, \rho) ).
		\end{gather*}
	\end{enumerate}
\end{Lemma}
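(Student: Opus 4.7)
The plan is to verify each of the three assertions by direct computation from \eqref{eq:unitary_central_element}, using systematically the factor-system relations \eqref{eq:ranges_sys}--\eqref{eq:cocycle_sys}, the equivariance $\alpha_g(s(\sigma)) = \sigma(g) s(\sigma)$, the conjugacy assumption \eqref{eq:comm_gamma} together with the intertwining $\beta \circ \gamma_\sigma = \gamma_\sigma^\beta \circ \beta$, and, throughout, the two identities valid in the Abelian case: $s(\tau) b = \gamma_\tau(b) s(\tau)$ for $b \in \aB$, and $s(\sigma) s(\pi) = \omega(\sigma,\pi) s(\sigma+\pi)$.

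For assertion~(1) I would first show that $u(\sigma,\pi) \in \aB$ by applying $\alpha_g$: the middle factor $\beta^{-1}(W)$, with $W := v(\sigma+\pi) \omega(\pi,\sigma)^* \gamma_\pi(v(\sigma)^*) v(\pi)^*$, lies in $\aB$ and is hence $\alpha$-fixed, while the three equivariant factors $s(\sigma+\pi)^*$, $s(\pi)$ and $s(\sigma)$ contribute characters cancelling by $(\sigma+\pi)(g) = \sigma(g)\pi(g)$. For centrality I would fix $b \in \aB$ and push it from the right end of $u(\sigma,\pi)\,b$ all the way to the left: the commutation rule $s(\tau) b = \gamma_\tau(b) s(\tau)$ moves $b$ past $s(\sigma)$ and then $s(\pi)$, yielding $\gamma_\pi(\gamma_\sigma(b))$. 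This element is then transported past $\beta^{-1}(W)$ by applying $\beta$ and invoking $\gamma_\tau^\beta = \Ad[v(\tau)] \circ \gamma_\tau$ together with the coaction relation \eqref{eq:coaction_sys} to re-associate the resulting $\gamma_\tau$'s with the $\omega$-factor inside $W$. Finally the remaining $\gamma_{\sigma+\pi}(b)$ is absorbed into $s(\sigma+\pi)^*$ via the same commutation rule, leaving exactly $b \cdot u(\sigma,\pi)$.

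Assertion~(2) is a telescoping calculation: form $u(\sigma,\pi)\,u(\sigma,\pi)^*$ and $u(\sigma,\pi)^*\,u(\sigma,\pi)$, expand \eqref{eq:unitary_central_element}, and use the support projections of the partial isometries, namely $v(\tau)^* v(\tau) = \gamma_\tau(\one_\aB)$, $v(\tau) v(\tau)^* = \gamma_\tau^\beta(\one_\aB)$, together with \eqref{eq:ranges_sys}, to collapse the intermediate partial isometries. What remains are adjacent products like $s(\tau)^* s(\tau) = \one_\aB$, which complete the proof.

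The main obstacle is assertion~(3). I would expand both sides of
\[
	u(\sigma+\pi, \rho)\,u(\sigma,\pi) \;=\; u(\sigma, \pi+\rho)\,\Delta_\sigma\bigl(u(\pi,\rho)\bigr)
\]
using \eqref{eq:unitary_central_element}, observing that $\Delta_\sigma(\cdot) = s(\sigma)^* \cdot s(\sigma)$ inserts an extra $s(\sigma)$/$s(\sigma)^*$ sandwich on the right-hand side. After collecting all the $\omega$- and $v$-factors and moving the $s(\tau)$'s around with the commutation identities, the equality reduces to repeated invocations of the $\omega$-cocycle relation \eqref{eq:cocycle_sys}, of the coaction relation \eqref{eq:coaction_sys} to commute $\gamma_\tau$'s past $\omega$'s, and of \eqref{eq:comm_gamma} to convert $v$-conjugates of $\gamma_\tau$'s into $\gamma_\tau^\beta$'s and back. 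Conceptually, $u$ encodes the obstruction of the chosen family $v(\cdot)$ to satisfy \eqref{eq:comm_omega}, and its cocycle property reflects the associativity of the triple product $s(\sigma) s(\pi) s(\rho)$ in $\aA$ combined with \eqref{eq:cocycle_sys}; making this reduction explicit is the main bookkeeping challenge.
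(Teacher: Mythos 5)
Your plan is correct and follows essentially the same route as the paper: push $b$ through $u(\sigma,\pi)$ using $s(\tau)b=\gamma_\tau(b)s(\tau)$, \eqref{eq:comm_gamma} and \eqref{eq:coaction_sys} for centrality; telescope the partial isometries for unitarity; and reduce the cocycle identity to (the adjoint of) \eqref{eq:cocycle_sys} after expanding both sides. The only cosmetic difference is that the paper computes just $u(\sigma,\pi)^*u(\sigma,\pi)=\one_\aB$ and deduces unitarity from centrality, which spares you the messier product $u(\sigma,\pi)u(\sigma,\pi)^*$.
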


\begin{Remark}We recall from~\cite[Corollary~3.11]{SchWa17a} that, for each $\sigma \in \hat G$, the isotypic component~$A(\sigma)$ is a Morita equivalence bimodule over $\aB$ and that, for all $\sigma, \pi \in \hat G$, the canonical multiplication map $m_{\sigma, \pi}\colon A(\sigma) \tensor_\aB A(\pi) \to A(\sigma + \pi)$ is an isomorphisms of Morita equivalence bimodules over~$\aB$.
	All~assertions in Lemma~\ref{lem:unitary_central_cocycle} may be derived from the facts that $m_{\sigma + \pi, \rho} \circ (m_{\sigma, \pi} \tensor \id) = m_{\sigma, \pi + \rho} \circ (\id \tensor m_{\pi, \rho})$ for all $\sigma,\pi,\rho \in \hat G$ and that the maps
	\begin{gather*}
		\Phi(\sigma,\pi)\colon \ A(\sigma + \pi) \to A(\sigma + \pi),
		\qquad
		\Phi(\sigma, \pi) := \hat\beta_{\sigma+ \pi}^{-1} \circ m_{\sigma, \pi} \circ \big(\hat\beta_\sigma \tensor \hat\beta_\pi\big) \circ m_{\sigma, \pi}^*
	\end{gather*}
	for all $\sigma, \pi \in \hat G$, with $\hat \beta_\sigma$, $\sigma \in \hat G$, being defined by equation~\eqref{eq:beta}, are automorphisms of Morita equivalence bimodules over $\aB$.
	Instead, we decided to present basic direct proofs despite rather long computations.
\end{Remark}
\begin{proof}
1.~Let $\sigma, \pi \in \hat G$.
	Then for each $b \in \aB$ we have
	\begin{alignat*}{2}
		b u(\sigma, \pi)
		&= &&bs(\sigma + \pi)^* \beta^{-1}\big( v(\sigma + \pi) \omega(\pi,\sigma)^* \gamma_\pi (v(\sigma)^* ) v(\pi)^*\big) s(\pi) s(\sigma)
			\\
			&= &&s(\sigma + \pi)^* \gamma_{\sigma + \pi}(b) \beta^{-1}\big( v(\sigma + \pi) \omega(\pi,\sigma)^* \gamma_\pi (v(\sigma)^* ) v(\pi)^*\big) s(\pi) s(\sigma)
			\\
			\overset{\eqref{eq:comm_gamma}}&{=} \,
			&&s(\sigma + \pi)^* \beta^{-1}\big( v(\sigma + \pi) \gamma_{\sigma + \pi} ( \beta(b) ) \omega(\pi,\sigma)^* \gamma_\pi (v(\sigma)^* ) v(\pi)^*\big) s(\pi) s(\sigma)
			\\
			\overset{\eqref{eq:coaction_sys}}&{=} \,
			&&s(\sigma + \pi)^* \beta^{-1} ( v(\sigma + \pi) \omega(\pi,\sigma)^* \gamma_\pi ( \gamma_\sigma ( \beta(b) )v(\sigma)^* ) v(\pi)^* ) s(\pi) s(\sigma)
			\\
			\overset{\eqref{eq:comm_gamma}}&{=} \,
			&&s(\sigma + \pi)^* \beta^{-1}\big( v(\sigma + \pi) \omega(\pi,\sigma)^* \gamma_\pi (v(\sigma)^* ) v(\pi)^*\big) \gamma_\pi ( \gamma_\sigma(b) ) s(\pi) s(\sigma)
			\\
			&= &&s(\sigma + \pi)^* \beta^{-1}\big( v(\sigma + \pi) \omega(\pi,\sigma)^* \gamma_\pi (v(\sigma)^* ) v(\pi)^*\big) s(\pi) s(\sigma) b = u(\sigma, \pi) b.
	\end{alignat*}
	That is, $u(\sigma, \pi)$ is central in $\aB$ as claimed.

2.~Let $\sigma, \pi \in \hat G$.
	Then $u(\sigma, \pi)^* u(\sigma, \pi)$ is equal to
	\begin{gather*}		
 s(\sigma)^* s(\pi)^* \beta^{-1}\big( v(\pi) \gamma_\pi (v(\sigma) ) \omega(\pi,\sigma) v(\sigma + \pi)^*\big)
		\\		
\	\qquad\quad {}\times	\gamma_{\sigma + \pi}(\one_\aB) \cdot \beta^{-1}\big( v(\sigma + \pi) \omega(\pi,\sigma)^* \gamma_\pi (v(\sigma)^* ) v(\pi)^*\big) s(\pi) s(\sigma)
		\\
\ \qquad{} =
		 s(\sigma)^* s(\pi)^* \beta^{-1}\big( v(\pi) \gamma_\pi (v(\sigma) ) \omega(\pi,\sigma) \omega(\pi,\sigma)^* \gamma_\pi (v(\sigma)^* ) v(\pi)^* \big) s(\pi) s(\sigma)
		\\
		\qquad {} \overset{\eqref{eq:ranges_sys}} {=}
		 s(\sigma)^* s(\pi)^* \beta^{-1}\big( v(\pi) \gamma_\pi (v(\sigma) ) \gamma_\pi ( \gamma_\sigma( \one_\aB) ) \gamma_\pi (v(\sigma)^* ) v(\pi)^* \big) s(\pi) s(\sigma)			
		\\
	\qquad {}	\overset{\eqref{eq:comm_gamma}} {=}
		 s(\sigma)^* s(\pi)^* \gamma_\sigma ( \gamma_\pi(\one_\aB) ) s(\sigma) s(\pi) = \one_\aB.
	\end{gather*}
	In other words, $u(\sigma, \pi)$ is an isometry, and hence unitary due to part~1 above.

3.~Let $\sigma, \pi, \rho \in \hat G$.
	To prove that $u(\sigma + \pi, \rho) u(\sigma, \pi)
	= u(\sigma, \pi + \rho) \Delta_\sigma ( u(\pi, \rho) )$, we examine both sides of the equation.
	Indeed, the left hand side reads as
	\begin{gather*}
s(\sigma + \pi + \rho)^* \beta^{-1}\big( v(\sigma + \pi + \rho) \omega(\rho,\sigma + \pi)^* \gamma_\rho (v(\sigma + \pi)^* ) v(\rho)^* \big) s(\rho)
		\\
\ \qquad\quad {}\times \gamma_{\sigma + \pi}(\one_\aB) \cdot \beta^{-1}\big( v(\sigma + \pi) \omega(\pi,\sigma)^* \gamma_\pi (v(\sigma)^* ) v(\pi)^*\big) s(\pi) s(\sigma)
		\\
\ \qquad{} =
		 s(\sigma + \pi + \rho)^* \beta^{-1}\big( v(\sigma + \pi + \rho) \omega(\rho,\sigma + \pi)^* \gamma_\rho (v(\sigma + \pi)^* ) v(\rho)^* \big) s(\rho)
		\\
\ \qquad\quad {} \times \beta^{-1}\big( v(\sigma + \pi) \omega(\pi,\sigma)^* \gamma_\pi (v(\sigma)^* ) v(\pi)^*\big) s(\pi) s(\sigma)
		\\
\ \qquad {} = s(\sigma + \pi + \rho)^* \beta^{-1}\big( v(\sigma + \pi + \rho) \omega(\rho,\sigma + \pi)^* \gamma_\rho (v(\sigma + \pi)^* ) v(\rho)^* \big)
		\\
\ \qquad \quad {}\times \big(\gamma_\rho \circ \beta^{-1} \big) \big( v(\sigma + \pi) \omega(\pi,\sigma)^* \gamma_\pi (v(\sigma)^* ) v(\pi)^*\big) s(\rho) s(\pi) s(\sigma)
		\\
\qquad{} \overset{\eqref{eq:comm_gamma}} {=}
		 s(\sigma + \pi + \rho)^* \beta^{-1}\big( v(\sigma + \pi + \rho) \omega(\rho,\sigma + \pi)^* \gamma_\rho ( \omega(\pi,\sigma)^* )
		\\
\ \qquad \quad {} \times \gamma_\rho ( \gamma_\pi ( v(\sigma)^* ) ) \gamma_\rho ( v(\pi)^* ) v(\rho)^* \big) s(\rho) s(\pi) s(\sigma).
	\end{gather*}
	Moreover, a similar computation shows that the right hand side is given by
	\begin{gather*}
	 s(\sigma + \pi +\rho)^* \beta^{-1}\big( v(\sigma + \pi + \rho) \omega(\pi + \rho,\sigma)^* \omega(\rho,\pi)^*
		\\			
\qquad \quad{}\times \gamma_\rho ( \gamma_\pi ( v(\sigma)^* ) ) \gamma_\rho ( v(\pi)^* ) v(\rho)^* \big) s(\rho) s(\pi) s(\sigma).	
	\end{gather*}
	Comparing these two expressions, we see that the 2-cocycle equation will be established once we verify that $\omega(\rho,\sigma + \pi)^* \gamma_\rho\big( \omega(\pi,\sigma)^* \big)= \omega(\pi + \rho,\sigma)^* \omega(\rho,\pi)^*$.
	But this is immediate from equation~\eqref{eq:cocycle_sys}.
\end{proof}

\begin{Lemma}\label{lem:independency}
	The cohomology class of $u$ is independent of all choices made.
	More precisely, the following holds:
	Let $(\hH', \gamma', \omega')$ be another factor system of $(\aA, G, \alpha)$ with generators $s'(\sigma)$, $\sigma \in \hat G$.
	Furthermore, let $v'(\sigma) \in \aB \tensor \End(\hH_\sigma')$, $\sigma \in \hat G$, be partial isometries, normalized to $v'(0) = \one_\aB$, satisfying equation~\eqref{eq:comm_gamma} for $\gamma_\sigma'$.
	Write $u'\colon \hat G \times \hat G \to \mathcal{U}Z(\aB)$ for the corresponding $2$-cocycle defined by equation~\eqref{eq:unitary_central_element}.
	Then $u$ and $u'$ are cohomologous.
\end{Lemma}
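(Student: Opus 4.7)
The plan is to construct a 1-cochain $c \colon \hat G \to \mathcal{U}Z(\aB)$, normalized by $c(0) = \one_\aB$, whose coboundary links $u$ with $u'$. I would split the argument into two independent reductions: (i) changing only the partial isometries $v(\sigma)$ within a fixed factor system, and (ii) replacing the factor system (together with its generators) by a conjugated one.

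For (i), fix the factor system $(\hH, \gamma, \omega)$ together with the generators $s(\sigma)$, and suppose $v(\sigma)$ and $\tilde v(\sigma)$ both satisfy equation~\eqref{eq:comm_gamma}. A short computation in the spirit of Lemma~\ref{lem:unitary_central_cocycle}(1), combining $\gamma^\beta_\sigma = \Ad[v(\sigma)] \circ \gamma_\sigma = \Ad[\tilde v(\sigma)] \circ \gamma_\sigma$, shows that $\tilde v(\sigma) v(\sigma)^*$ commutes with $\gamma^\beta_\sigma(\aB)$ on the final projection $\gamma^\beta_\sigma(\one_\aB)$. Setting
\[
c(\sigma) := s(\sigma)^* \beta^{-1}\bigl( \tilde v(\sigma) v(\sigma)^* \bigr) s(\sigma),
\]
the arguments in parts (1) and (2) of Lemma~\ref{lem:unitary_central_cocycle} adapt to yield $c(\sigma) \in \mathcal{U}Z(\aB)$, with $c(0) = \one_\aB$ from the chosen normalizations. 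Plugging the decomposition of $\tilde v(\sigma)$ into the definition~\eqref{eq:unitary_central_element}, sliding the $c$-factors across $\gamma_\pi$ and $\omega$ using centrality and the intertwining relations~\eqref{eq:coaction_sys} and~\eqref{eq:cocycle_sys}, and noting that a term $s(\sigma)^* c(\pi) s(\sigma)$ equals $\Delta_\sigma(c(\pi))$, I expect to obtain
\[
u_{\tilde v}(\sigma,\pi) = c(\sigma+\pi)^{-1}\, c(\sigma)\, \Delta_\sigma\bigl(c(\pi)\bigr)\, u_v(\sigma,\pi),
\]
which exhibits $u_{\tilde v} u_v^{-1}$ as a 2-coboundary.

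For (ii), the classification recalled after Definition~\ref{def:conjugancy} furnishes partial isometries $w(\sigma) \in \aB \tensor \End(\hH_\sigma, \hH'_\sigma)$ with $w(0) = \one_\aB$ implementing $(\hH', \gamma', \omega') = w(\hH, \gamma, \omega) w^*$. Inspecting the construction, I may assume without loss of generality that $s'(\sigma) = w(\sigma) s(\sigma)$, absorbing any remaining discrepancy into the $w(\sigma)$. Defining $\tilde v(\sigma) := \beta(w(\sigma))^* v'(\sigma) w(\sigma)$, a direct check using $\gamma'_\sigma = \Ad[w(\sigma)] \circ \gamma_\sigma$ and $\gamma^{\prime\beta}_\sigma = \Ad[\beta(w(\sigma))] \circ \gamma^\beta_\sigma$ shows that $\tilde v(\sigma)$ is a partial isometry satisfying equation~\eqref{eq:comm_gamma} relative to the original factor system $(\hH, \gamma, \omega)$. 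Substituting $s'$, $v'$, $\omega'$, and $\gamma'$ into the definition of $u'$, the $w$-factors should telescope -- via the conjugacy identity $\omega'(\pi, \sigma) = w(\pi) \gamma_\pi(w(\sigma)) \omega(\pi, \sigma) w(\pi + \sigma)^*$ and the rule $\beta^{-1} \circ \Ad[\beta(w)] = \Ad[w] \circ \beta^{-1}$ -- leaving the identity $u'(\sigma, \pi) = u_{\tilde v}(\sigma, \pi)$. Combining (i) and (ii) then yields the claim. The main obstacle is this final telescoping: the $w$-factors are nested deeply inside $\beta^{-1}$, $\gamma_\pi$, and $\omega(\pi, \sigma)^*$, so the cancellations demand precise orchestration, and the range projections $\gamma_\sigma(\one_\aB) = w(\sigma)^* w(\sigma)$ must be tracked throughout since $v$, $v'$, and $w$ are genuinely partial isometries rather than unitaries.
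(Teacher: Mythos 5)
Your proposal is correct, and it rests on the same mechanism as the paper's proof: a central unitary $1$-cochain manufactured from $s$, $s'$, $v$, $v'$, whose coboundary accounts for the discrepancy between the two $2$-cocycles. The organization, however, is genuinely different. The paper treats both changes in one stroke: it sets $w(\sigma) := s(\sigma)s'(\sigma)^*$, defines the single cochain $u(\sigma) := s(\sigma)^* \beta^{-1}\bigl( v(\sigma) w(\sigma) v'(\sigma)^* \bigr) s'(\sigma)$, proves its centrality, and then runs one long computation through the auxiliary identity~\eqref{eq:proof_cohomologous}. You instead factor the argument: step~(ii) transports $v'$ back to the original factor system via $\tilde v(\sigma) := \beta(w(\sigma))^* v'(\sigma) w(\sigma)$ and shows that this leaves the $2$-cocycle \emph{literally} unchanged, $u' = u_{\tilde v}$ (this checks out -- the $w$-factors do telescope, using $w(\sigma)^* w(\sigma) = \gamma_\sigma(\one_\aB)$ and the intertwining $\gamma_\pi(Y)\tilde v(\pi)^* = \tilde v(\pi)^* \gamma_\pi^\beta(Y)$ to flush $\gamma_\pi\bigl(\beta(w(\sigma))^*\bigr)$ out to the right); step~(i) then compares two admissible families over the \emph{same} factor system and produces the coboundary with cochain $c(\sigma) = s(\sigma)^* \beta^{-1}\bigl(\tilde v(\sigma) v(\sigma)^*\bigr) s(\sigma)$. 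The composite of your two cochains is exactly the inverse of the paper's $u(\sigma)$, so the proofs are algebraically equivalent; yours has the conceptual advantage of isolating the fact that conjugating the factor system alone does not move the cocycle at all. Two small remarks. First, the reduction $s'(\sigma) = w(\sigma) s(\sigma)$ is not a ``without loss of generality'' but an identity: with $w(\sigma) := s'(\sigma) s(\sigma)^*$ one has $w(\sigma)s(\sigma) = s'(\sigma) s(\sigma)^* s(\sigma) = s'(\sigma)$ because $s(\sigma)$ is an isometry. Second, in step~(i) the relation $\tilde v(\sigma) v(\sigma)^* = \beta\bigl(\gamma_\sigma(c(\sigma))\bigr)$, hence $\tilde v(\sigma) = \beta\gamma_\sigma\bigl(c(\sigma)\bigr) v(\sigma)$, shows that your coboundary computation is precisely the one already carried out in Lemma~\ref{lem:cocycel_trivial}, which you could cite instead of redoing; the exact placement of inverses in your displayed coboundary formula should be checked against the convention in~\eqref{eq:coboundary}, but this does not affect the conclusion that $u_{\tilde v} u_v^{-1}$ is a coboundary.
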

\begin{proof}
	As a preliminary step, let us denote by $w(\sigma) := s(\sigma) s'(\sigma)^*$, $\sigma \in \hat G$, the family in $\aB \tensor \End(\hH'_\sigma,\hH_\sigma)$ implementing the conjugation between $(\hH', \gamma', \omega')$ and $(\hH, \gamma, \omega)$.
	We define a map $u\colon \hat G \to \aB$ by
	\begin{gather*}
		u(\sigma) := s(\sigma)^* \beta^{-1} \bigl( v(\sigma) w(\sigma) v'(\sigma)^* \bigr) s'(\sigma).
	\end{gather*}
	For each $\sigma \in \hat G$ the element $u(\sigma)$ is, in fact, central in $\aB$, because for each $b\in \aB$ we have
	\begin{align*}
		u(\sigma) b
		&=
		s(\sigma)^* \beta^{-1} \bigl( v(\sigma) w(\sigma) v'(\sigma)^* \beta\gamma_\sigma'(b) \bigr) s'(\sigma)
		\\
		\overset{\eqref{eq:comm_gamma}}&{=}
		s(\sigma)^* \beta^{-1} \bigl( \beta\gamma(b) v(\sigma) w(\sigma) v'(\sigma)^* \bigr) s'(\sigma)
		=
		b u(\sigma).
	\end{align*}
	Furthermore, it is straightforwardly checked that
	\begin{gather}\label{eq:proof_cohomologous}
		\beta^{-1} ( w(\sigma)^* v(\sigma^*) ) s(\sigma) u(\sigma) = \beta^{-1} ( v'(\sigma)^* ) s'(\sigma)
	\end{gather}
	for all $\sigma \in \hat G$.
	For each $(\sigma,\pi) \in \hat G \times \hat G$ we may thus compute
	\begin{alignat*}{2}
		u'(\sigma, \pi)
		&=
		&&s'(\sigma + \pi)^* \beta^{-1} \bigl( v'(\sigma + \pi) \omega'(\pi, \sigma)^* \gamma_\pi ( v'(\sigma)^* ) v'(\pi)^* \bigr) s'(\pi) s'(\sigma)
		\\
		\overset{\eqref{eq:comm_gamma}}&{=}
		&&s'(\sigma + \pi)^* \beta^{-1} \bigl( v'(\sigma + \pi) \omega'(\pi, \sigma)^* v'(\pi)^* \bigr) \gamma_\pi \beta^{-1} ( v'(\sigma)^* ) s'(\pi) s'(\sigma)
		\\
		&=
		&&s'(\sigma + \pi)^* \beta^{-1} \bigl( v'(\sigma + \pi) \omega'(\pi, \sigma)^* v'(\pi)^* \bigr) s'(\pi) \beta^{-1} ( v'(\sigma)^* ) s'(\sigma)
		\\
		\overset{\eqref{eq:proof_cohomologous}}&{=}
		&&u(\sigma+\pi)^* s(\sigma + \pi)^* \beta^{-1} \bigl( v(\sigma + \pi) w(\sigma+\pi)^*
		\omega'(\pi, \sigma)^*
		w(\pi)^* v(\pi)^* \bigr) s(\pi) u(\pi)
		\\
		&{} &&\quad \times
		\beta^{-1} ( w(\sigma)^* v(\sigma)^* ) s(\sigma) u(\sigma).
	\end{alignat*}
	Since $u(\sigma)$ and $u(\pi)$ are central in $\aB$, we may collect the terms $u(\sigma+\pi)$, $u(\sigma)$, and $u(\pi)$.
	Traversing the arguments for the first equalities further yields
	\begin{align*}
		u'(\sigma, \pi)
		&=
		\Delta_\sigma ( u(\pi) ) u(\sigma) u(\sigma+\pi)^*
		\cdot s(\sigma + \pi)^* \beta^{-1}
		\\
		&\quad \times\bigl( v(\sigma + \pi)
		w(\sigma+\pi) \omega'(\pi, \sigma)^* \gamma_\pi ( w(\sigma) )^* w(\pi)^*
		\cdot
		\gamma_\pi ( v(\sigma)^* ) v(\pi)^* \bigr) s(\pi) s(\sigma)
		\\
		&=
		\Delta_\sigma ( u(\pi) ) u(\sigma) u(\sigma+\pi)^* \cdot u(\sigma, \pi).
	\end{align*}
	This shows that $u$ and $u'$ are cohomologous as asserted.
\end{proof}

\begin{Lemma}\label{lem:cocycel_trivial}
	Suppose that $u$ is a $2$-coboundary for some $1$-cochain $u\colon \hat G \to \mathcal{U}Z(\aB)$, i.e.,
	\begin{gather}\label{eq:coboundary}
		u(\sigma,\pi) = \Delta_\sigma ( u(\pi) ) u(\sigma) u(\sigma + \pi)^*
	\end{gather}
	for all $\sigma, \pi \in \hat G$.
	Then
\[ \big(\hH, \gamma^\beta, \omega^\beta\big) = v'(\hH, \gamma, \omega)(v')^*\] for the partial isometries $v'(\sigma) := \beta\gamma_\sigma ( u(\sigma) ) v(\sigma)$, $\sigma \in \hat G$.
\end{Lemma}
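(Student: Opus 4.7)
The plan is to verify directly that the modified family $v'(\sigma) := \beta\gamma_\sigma(u(\sigma)) v(\sigma)$ meets the three requirements of Definition~\ref{def:conjugancy} between $(\hH, \gamma, \omega)$ and $\big(\hH, \gamma^\beta, \omega^\beta\big)$: the two adjoint-action identities for $\gamma_\sigma$, the normalization $v'(0) = \one_\aB$, and the twisted cocycle compatibility with $\omega$.

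For the adjoint-action identities, I would exploit the centrality of $u(\sigma) \in \mathcal{U}Z(\aB)$: since $\gamma_\sigma(u(\sigma))$ commutes with $\gamma_\sigma(\aB)$ in the corner $\gamma_\sigma(\one_\aB)(\aB \tensor \End(\hH_\sigma))\gamma_\sigma(\one_\aB)$, its image $\beta\gamma_\sigma(u(\sigma)) = \gamma_\sigma^\beta(\beta(u(\sigma)))$ commutes with $\gamma_\sigma^\beta(\aB)$ in the corresponding corner. Composing with the given conjugation relation~\eqref{eq:comm_gamma} for $v(\sigma)$ will produce both $\Ad[v'(\sigma)] \circ \gamma_\sigma = \gamma_\sigma^\beta$ and the dual identity. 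A brief computation will also show $v'(\sigma)^* v'(\sigma) = v(\sigma)^* v(\sigma)$, so that $v'(\sigma)$ is a partial isometry with the expected support projections, and the normalization will follow from $u(0) = \one_\aB$.

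The critical step is the $\omega$-conjugacy $v'(\sigma)\gamma_\sigma(v'(\pi))\omega(\sigma,\pi) = \omega^\beta(\sigma,\pi) v'(\sigma+\pi)$, which is nontrivial because only equation~\eqref{eq:comm_gamma}, not the full conjugacy, is assumed for $v$; the failure of the $\omega$-cocycle is precisely what the $2$-cocycle $u(\sigma,\pi)$ of equation~\eqref{eq:unitary_central_element} encodes. To proceed, I would expand $v'$ using its definition, apply the functorial extension of the intertwining rule $v(\sigma)\gamma_\sigma(x) = \gamma_\sigma^\beta(x) v(\sigma)$ (valid for $x \in \aB \tensor \End(\hH_\pi)$) to move $v(\sigma)$ past $\gamma_\sigma(\beta\gamma_\pi(u(\pi)))$, and then collect everything through $\beta$ together with~\eqref{eq:coaction_sys}. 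This should reduce the claim, after clearing the common factor $\omega^\beta$, to an equality modulo $\gamma_\sigma\gamma_\pi(\one_\aB)$ between operator lifts of $\gamma_\sigma(u(\sigma)) \gamma_\sigma\gamma_\pi(u(\pi))$ and $\gamma_\sigma\gamma_\pi(u(\sigma+\pi))$.

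The main obstacle will be this last identity, which is exactly where the coboundary hypothesis enters. I would use the definitions $\Delta_\sigma(b) = s(\sigma)^* b s(\sigma)$ and the identity $\gamma_\sigma(b) s(\sigma) = s(\sigma) b$, the explicit formula~\eqref{eq:unitary_central_element} for $u(\sigma,\pi)$, the cocycle relation~\eqref{eq:cocycle_sys} to absorb the $\omega(\pi,\sigma)$-factor appearing in~\eqref{eq:unitary_central_element}, and the coboundary relation~\eqref{eq:coboundary} rewritten as $u(\sigma+\pi) = u(\sigma,\pi)^* \Delta_\sigma(u(\pi)) u(\sigma)$ to convert the scalar cochains $u(\sigma), u(\pi), u(\sigma+\pi) \in \mathcal{U}Z(\aB)$ into the matching operators in $\aB \tensor \End(\hH_\sigma \tensor \hH_\pi)$. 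The real challenge is organizing this bookkeeping cleanly, but once the correct translations between the $\Delta_\sigma$-data and the $\gamma_\sigma$-data are set up, the coboundary condition will supply exactly the cancellation that was missing for the bare family $v$.
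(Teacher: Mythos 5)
Your plan is viable and would, with careful bookkeeping, yield a correct proof, but it follows a genuinely different route from the paper. You propose a head-on verification of the three conjugacy conditions for $v'$, reducing the $\omega$-identity to a residual equation that the coboundary hypothesis settles. The paper instead argues structurally: it introduces the auxiliary generators $s'(\sigma) := s(\sigma)u(\sigma)^* = \gamma_\sigma(u(\sigma)^*)s(\sigma)$ with associated factor system $(\hH,\gamma,\omega')$ (centrality of $u(\sigma)$ gives $\gamma'=\gamma$), observes that rewriting \eqref{eq:unitary_central_element} via \eqref{eq:coboundary} says exactly that the bare family $v$ conjugates $(\hH,\gamma,\omega)$ onto $\bigl(\hH,\gamma^\beta,(\omega')^\beta\bigr)$, and that $\beta\gamma(u)$ conjugates the latter onto $\bigl(\hH,\gamma^\beta,\omega^\beta\bigr)$; composing the two conjugations produces precisely $v'=\beta\gamma(u)v$. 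That factorization buys you a proof with essentially no leg-numbering computations, reusing the already-established calculus of conjugate factor systems. Two cautions about your version: first, the cocycle \eqref{eq:unitary_central_element} is built from $\omega(\pi,\sigma)$, $\gamma_\pi(v(\sigma)^*)$ and $v(\pi)^*$ with the arguments swapped, so the identity you should verify directly is the conjugacy condition for the pair $(\pi,\sigma)$ (which suffices, as it must hold for all pairs); equation \eqref{eq:cocycle_sys} will \emph{not} convert $\omega(\pi,\sigma)$ into $\omega(\sigma,\pi)$ --- these live in different spaces $\aB\tensor\End(\hH_{\pi\tensor\sigma},\hH_\pi\tensor\hH_\sigma)$ versus $\aB\tensor\End(\hH_{\sigma\tensor\pi},\hH_\sigma\tensor\hH_\pi)$ --- so drop that step. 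Second, your stated target identity relating lifts of $\gamma_\sigma(u(\sigma))\gamma_\sigma\gamma_\pi(u(\pi))$ and $\gamma_\sigma\gamma_\pi(u(\sigma+\pi))$ is not an equality on the nose; it holds only after inserting the lift of $u(\sigma,\pi)$ (equivalently, after invoking \eqref{eq:coboundary} together with \eqref{eq:coaction_sys} and $\gamma_\sigma(\Delta_\sigma(b))=b\,\gamma_\sigma(\one_\aB)$ for central $b$), which your final paragraph does anticipate but should be made explicit.
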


\begin{proof}
	Using that each $u(\sigma)$, $\sigma \in \hat G$, is unitary in $\aB$, we see at once that $v'(\sigma)$, $\sigma \in \hat G$, are partial isometries in $\aB \tensor \End(\hH_\sigma)$, $\sigma \in \hat G$.
	Since $u(0) = \one_\aB$, it is also clear that $v'(0) = \one_\aB$.
	We~proceed to prove that $\big(\hH, \gamma^\beta, \omega^\beta\big) = v'(\hH, \gamma, \omega)(v')^*$.
	For this, we first observe that equation~\eqref{eq:unitary_central_element} can be rewritten as
	\begin{gather*}
		v(\sigma+\pi) \omega(\pi, \sigma)^* \gamma_\pi ( v(\sigma)^* ) v(\pi)^*
		=
		\beta ( s(\sigma+\pi) u(\sigma, \pi) s(\sigma)^* s(\pi)^* )
		\\
		\qquad{} \overset{\eqref{eq:coboundary}} {=}
		\beta \bigl( s(\sigma+\pi) u(\sigma+\pi)^* u(\sigma) \Delta_\sigma ( u(\pi) ) s(\sigma)^* s(\pi)^* \bigr)
		=
		(\omega')^\beta(\pi, \sigma)^*
	\end{gather*}
	for all $\sigma, \pi \in \hat G$.
	Combining this with the fact that the partial isometries $v(\sigma)$, $\sigma \in \hat G$, satisfy equation~\eqref{eq:comm_gamma} gives $v(\hH, \gamma, \omega)v^* = \bigl( \hH, \gamma^\beta, (\omega')^\beta \bigr)$.
	Next, for each $\sigma \in \hat G$ we put $s'(\sigma) := s(\sigma) u(\sigma)^* = \gamma_\sigma \bigl( u(\sigma)^* \bigr) s(\sigma)$ and note that $s'(\sigma)$ is an isometry in $\aA \tensor \End(V_\sigma,\hH_\sigma)$ satisfying $\alpha_g\big(s'(\sigma)\big)=s'(\sigma) \cdot \sigma_g$ for all $g \in G$.
	Evidently, $s'(0) = \one_\aA$.
	We write $(\hH, \gamma', \omega')$ for the associated factor system.
	As each $u(\sigma)$, $\sigma \in \hat G$, is central in $\aB$ we see that $\gamma' = \gamma$.
	Moreover, by construction, $\gamma(u)(\hH, \gamma, \omega')\gamma(u^*) = (\hH, \gamma, \omega)$ (cf.~\cite[Lemma~4.3]{SchWa17c}).
	It follows that $\beta \gamma(u) \bigl( \hH, \gamma^\beta, (\omega')^\beta \bigr) \beta \gamma(u^*) = \big(\hH, \gamma^\beta, \omega^\beta\big)$.
	We may thus conclude that
	\begin{gather*}
		v'(\hH, \gamma, \omega) (v')^*
		= \beta \gamma(u) v (\hH, \gamma, \omega) v^* \beta \gamma(u^*)
		= \big(\hH, \gamma^\beta, \omega^\beta\big).
		\tag*{\qed}
	\end{gather*}
	\renewcommand{\qed}{}
\end{proof}

In summary, we have thus proved:

\begin{Theorem}\label{thm:lift_cohomology}
	Let $G$ be a compact Abelian group, let $(\aA, G, \alpha)$ be a free C\Star-dynamical system with fixed point algebra $\aB$, and let $\beta$ be a \Star-automorphism of $\aB$.
	Suppose that $\gamma$ and $\gamma^\beta$ and are conjugated and let $u$ denote the corresponding $2$-cocycle.
	Then $\beta$ lifts to $\aA$ if and only if the cohomology class of $u$ in $ H^2\big(\hat G,\mathcal{U}Z(\aB)\big)_{\Delta}$ vanishes.
\end{Theorem}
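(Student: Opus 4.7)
The plan is to combine Theorem~\ref{thm:lift_conjugated_factor_sys} with the three lemmas above, reducing everything to whether the conjugacy equation~\eqref{eq:comm_omega} for the partial isometries $v(\sigma)$ can be satisfied (possibly after a central twist) rather than merely the intertwining equation~\eqref{eq:comm_gamma}. By Theorem~\ref{thm:lift_conjugated_factor_sys}, $\beta$ lifts to $\aA$ if and only if $(\hH,\gamma,\omega) \sim \big(\hH,\gamma^\beta,\omega^\beta\big)$, so the whole question is to detect this conjugacy starting from the weaker data that $\gamma$ and $\gamma^\beta$ are conjugated.

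For the direction ``$[u]=0 \Rightarrow \beta$ lifts,'' I would simply invoke Lemma~\ref{lem:cocycel_trivial}: if $u$ is a coboundary of some $1$-cochain $u\colon \hat G \to \mathcal{U}Z(\aB)$, then the twisted partial isometries $v'(\sigma) := \beta\gamma_\sigma(u(\sigma))\, v(\sigma)$ witness $\big(\hH,\gamma^\beta,\omega^\beta\big) = v'(\hH,\gamma,\omega)(v')^*$. Applying Theorem~\ref{thm:lift_conjugated_factor_sys} then yields a lift of $\beta$.

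For the converse, assume $\beta$ lifts. By Theorem~\ref{thm:lift_conjugated_factor_sys} the factor systems $(\hH,\gamma,\omega)$ and $\big(\hH,\gamma^\beta,\omega^\beta\big)$ are conjugated, so there exist partial isometries $\tilde v(\sigma) \in \aB \tensor \End(\hH_\sigma)$, $\sigma \in \hat G$, satisfying both~\eqref{eq:comm_gamma} \emph{and} the full conjugacy equation~\eqref{eq:comm_omega}. I would then compute the $2$-cocycle $\tilde u$ produced by the formula~\eqref{eq:unitary_central_element} for this particular choice: taking adjoints in~\eqref{eq:comm_omega} (with $\sigma$ and $\pi$ swapped) gives
\[
\tilde v(\sigma+\pi)\, \omega(\pi,\sigma)^*\, \gamma_\pi\bigl(\tilde v(\sigma)^*\bigr)\, \tilde v(\pi)^*
= \tilde v(\sigma+\pi)\tilde v(\sigma+\pi)^* \,\omega^\beta(\pi,\sigma)^*
= \omega^\beta(\pi,\sigma)^*,
\]
where the last equality uses that $\omega^\beta(\pi,\sigma)^*$ has range projection $\gamma^\beta_{\sigma+\pi}(\one_\aB) = \tilde v(\sigma+\pi)\tilde v(\sigma+\pi)^*$. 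Applying $\beta^{-1}$ turns this into $\omega(\pi,\sigma)^*$, and using $s(\pi)s(\sigma) = \omega(\pi,\sigma)\, s(\sigma+\pi)$ together with $\omega(\pi,\sigma)^*\omega(\pi,\sigma) = \gamma_{\sigma+\pi}(\one_\aB)$ collapses the formula to $\tilde u(\sigma,\pi)=\one_\aB$. Hence $\tilde u$ is the trivial cocycle, which is certainly a coboundary.

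Finally, Lemma~\ref{lem:independency} (applied with the same factor system but the two different choices $v(\sigma)$ and $\tilde v(\sigma)$ of partial isometries implementing~\eqref{eq:comm_gamma}) implies that $u$ and $\tilde u$ define the same class in $H^2\big(\hat G, \mathcal{U}Z(\aB)\big)_\Delta$. Consequently $[u]=[\tilde u]=0$, completing the equivalence. The only mildly delicate point I expect is the bookkeeping with initial and final projections of the $\tilde v(\sigma)$ in the reduction $\tilde u \equiv \one_\aB$; this is handled by the standing normalization that these projections equal $\gamma_\sigma(\one_\aB)$ and $\gamma^\beta_\sigma(\one_\aB)$, respectively, as already used in Section~\ref{sec:cpt_abelian}.
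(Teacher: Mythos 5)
Your proposal is correct and follows essentially the same route as the paper: the backward direction is exactly Lemma~\ref{lem:cocycel_trivial} combined with Theorem~\ref{thm:lift_conjugated_factor_sys}, and the forward direction is the paper's ``introductory considerations'' (where the conjugating partial isometries $v(\sigma)=\hat\beta(s(\sigma))s(\sigma)^*$ are shown to produce the trivial cocycle via the rewritten form of equation~\eqref{eq:comm_omega}) together with the independence statement of Lemma~\ref{lem:independency}. Your explicit adjoint computation showing $\tilde u\equiv\one_\aB$ is just a slightly more hands-on rendering of that same step, and your caveat about normalizing the initial and final projections of $\tilde v(\sigma)$ matches the convention the paper adopts.
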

\begin{proof}The assertion that the cohomology class of $u$ in $H^2\big(\hat G,\mathcal{U}Z(\aB)\big)_{\Delta}$ vanishes if $\beta$ lifts to $\aA$ follows from the introductory considerations in this section.
	The converse follows from Lemma~\ref{lem:cocycel_trivial} when combined with Theorem~\ref{thm:lift_conjugated_factor_sys}.
\end{proof}

\begin{Corollary}\label{cor:torus_action}
	Let $(\aA, \mathbb{T}, \alpha)$ be a free C\Star-dynamical system with fixed point algebra~$\aB$.
	Identify~$\hat{\mathbb T}$ with $\Z$ and write $1 \in \Z$ for its positive generator.
	Furthermore, let $\beta$ be a \Star-automorphism of~$\aB$ such that~$\gamma_1$ and~$\gamma_1^\beta$ are conjugated for some factor system $(\hH, \gamma, \omega)$ of $(\aA, \mathbb T, \alpha)$.
	Then $\beta$ lifts to~$\aA$.
\end{Corollary}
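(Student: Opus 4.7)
The plan is to reduce to Theorem~\ref{thm:lift_cohomology}. To apply it, I must first upgrade the given hypothesis — conjugacy of $\gamma_1$ and $\gamma_1^\beta$ — to the statement that $\gamma_\sigma \sim \gamma_\sigma^\beta$ for \emph{every} $\sigma \in \hat{\mathbb T} \cong \Z$, so that the associated $2$-cocycle $u$ is well-defined; then I must show that $[u] \in H^2\bigl(\Z, \mathcal{U}Z(\aB)\bigr)_\Delta$ vanishes.

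The second task is the easier of the two: since $\Z$ has cohomological dimension one, $H^2(\Z, M)$ vanishes for every $\Z$\nobreakdash-module $M$, so $u$ is automatically a coboundary once it exists. Concretely, given $u$, one sets $t(0) := t(1) := \one_\aB$ and recursively defines $t(n+1) := u(1, n)^{-1} \Delta_1(t(n))$ for $n \geq 0$ (with the symmetric prescription for $n \leq -1$); the cocycle identity established in Lemma~\ref{lem:unitary_central_cocycle}(3) then propagates the equation $u(\sigma, \pi) = \Delta_\sigma(t(\pi)) t(\sigma) t(\sigma + \pi)^{-1}$ to all $\sigma, \pi \in \Z$, exhibiting $u$ as a coboundary of $t$.

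For the first task, let $v(1) \in \aB \otimes \End(\hH_1)$ be the partial isometry provided by the hypothesis, with initial and final projections $\gamma_1(\one_\aB)$ and $\gamma_1^\beta(\one_\aB)$. For $n \geq 2$ I inductively set
\[
v(n) := \omega^\beta(1, n-1)^{*} \, v(1) \gamma_1\bigl(v(n-1)\bigr) \, \omega(1, n-1).
\]
Using equations~\eqref{eq:ranges_sys}--\eqref{eq:cocycle_sys} together with the induction hypothesis, a direct computation shows that $v(n)$ is a partial isometry satisfying $\Ad[v(n)] \circ \gamma_n = \gamma_n^\beta$, and that the full conjugacy relation $v(1) \gamma_1(v(n-1)) \omega(1, n-1) = \omega^\beta(1, n-1) v(n)$ of Definition~\ref{def:conjugancy} holds by construction. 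To obtain $v(n)$ for $n \leq -1$, I would first manufacture $v(-1) \in \aB \otimes \End(\hH_{-1})$ by transporting $v(1)$ through the duality pairing encoded in the element $p_1$ of~\eqref{eq:involution_decomposition} together with the intertwiner $\omega(-1, 1)$; the same inductive recursion with $-1$ replacing $1$ then yields $v(-n)$ for $n \geq 2$.

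The principal obstacle is the construction of $v(-1)$: the forward recursion extends $v(1)$ only to non-negative indices, and obtaining a conjugating partial isometry at $-1$ forces one to invoke the duality between $\hH_1$ and $\hH_{-1}$ provided by the factor-system machinery of Section~\ref{sec:facsys}. Once that is done, combining the extension to all of $\Z$ with the cohomological trivialization and invoking Theorem~\ref{thm:lift_cohomology} produces the desired lift of~$\beta$.
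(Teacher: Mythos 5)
Your proposal is correct and follows essentially the same route as the paper: extend the single conjugating partial isometry $v(1)$ inductively to all of $\hat{\mathbb T}\cong\Z$ (handling negative indices via the dual/adjoint of $s(1)$), and then combine Theorem~\ref{thm:lift_cohomology} with the vanishing of $H^2\big(\Z,\mathcal{U}Z(\aB)\big)_{\Delta}$. The only cosmetic difference is that the paper first normalizes the factor system so that $s(n)=s_{1,2}\cdots s_{1,n+1}$ and conjugates in nested legs, whereas you keep a general factor system and transport through the cocycles $\omega(1,n-1)$; both versions leave the construction of $v(-1)$ equally terse.
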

\begin{proof}
	The hypothesis does not depend on the choice of the factor system.
	For this reason, as a first step, we may choose the following system.
	Let $s \in \aA \tensor \End(\C,\hH_1)$ be an isometry satisfying $\alpha_z ( s ) = s (\one_{\aA} \tensor z)$ for all $z \in \mathbb T$.
	For each $n > 1$ this element yields an isometry $s(n) \in \aA \tensor \End(\C,\hH_1 \tensor \cdots \tensor \hH_1)$
	satisfying $\alpha_z ( s(n) ) = s(n) (\one_{\aA} \tensor z^n)$ for all $z \in \mathbb T$ by putting $s(n) := s_{1,2} s_{1,3} \cdots s_{1,n+1}$.
	Here and subsequently, the subindices refer to the leg numbering in the respective tensor product.
	It is easily checked that the coactions associated with the family~$s(n)$, $n\geq 0$, satisfy $\gamma_{m+n} = (\gamma_m \tensor \id) \circ \gamma_n$ for all $m,n \geq 0$.
	Moreover, for each $n > 1$ we put
	\begin{gather*}
		v(n) :=
		v_{1,2} s_{1,2} ( \cdots ( v_{1,n} s_{1,n}(v_{1,n+1}) s^*_{1,n} ) \cdots ) s^*_{1,2}
	\end{gather*}
	and note that $v(n) \in \aB \tensor \End(\hH_1 \tensor \cdots \tensor \hH_1)$.
	A straightforward induction now proves that $v(n)$ satisfies
	equation~\eqref{eq:comm_gamma} for all $n > 1$.
	Since similar considerations apply to the partial adjoint $s(-1) := (s^*)^T \in \aA \tensor \End(\C,\hH_1)$,
	we ultimately obtain partial isometries $v(n)$, $n \in \Z$, normalized to $v(0) := \one_\aB$, satisfying equation~\eqref{eq:comm_gamma}.
	Hence the assertion follows from combining Theorem~\ref{thm:lift_cohomology} with the fact that $H^2 (\Z,\mathcal{U}Z(\aB) )_{\Delta}$ vanishes (cf.~\cite[Chapter~VI.6]{Mac95}).
\end{proof}

In the remainder of this section we additionally assume that $\aA$ is commutative.
In this case, the Fr\"ohlich map $\Delta$ is trivial.
Furthermore, for each $\sigma \in \hat G$ we have $\gamma_\sigma(b) = b \gamma_\sigma(\one_\aB)$ for all $b \in \aB$, and for each $(\sigma,\pi) \in \hat G \times \hat G$ it may be derived that $\omega(\pi, \sigma) = \text{flip} ( \omega(\sigma,\pi) )$, where $\text{flip}$ stands for the tensor flip of $\End(\hH_\sigma) \tensor \End(\hH_\pi)$.
In consequence, the 2-cocycle $u\colon \hat G \times \hat G \to \mathcal{U}Z(\aB)$ from Lemma~\ref{lem:unitary_central_cocycle} above is untwisted and symmetric, and hence a 2-coboundary by~\cite[Lemma~3.6]{BCER84}.

\begin{Corollary}\label{cor:commAbelian}
	Suppose that $\aA$ is commutative. Let $G$ be a compact Abelian group, let $(\aA, G, \alpha)$ be a free C\Star-dynamical system with fixed point algebra $\aB$, and let $\beta$ be a \Star-automorphism of~$\aB$.
	Then~$\beta$ lifts to~$\aA$ if and only if $\beta\gamma_\sigma(\one_\aB)$ is Murray--von~Neumann equivalent to $\gamma_\sigma(\one_\aB)$ in \mbox{$\aB \tensor \End(\hH_\sigma)$} for all $\sigma \in \hat G$.
\end{Corollary}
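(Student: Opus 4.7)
The plan is to leverage Theorem~\ref{thm:lift_cohomology} together with the two simplifications recorded in the paragraph immediately preceding the corollary. First, commutativity gives $\gamma_\sigma(b) = (b \tensor \one_{\hH_\sigma}) \gamma_\sigma(\one_\aB)$, so $\gamma_\sigma$ is completely determined by the projection $\gamma_\sigma(\one_\aB)$; in particular $\gamma_\sigma^\beta(b) = (b \tensor \one_{\hH_\sigma}) \beta\gamma_\sigma(\one_\aB)$. Second, the 2-cocycle from Lemma~\ref{lem:unitary_central_cocycle} is necessarily a 2-coboundary in the commutative case, by \cite[Lemma~3.6]{BCER84}. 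Combined with Theorem~\ref{thm:lift_cohomology}, this reduces the lifting problem to: $\beta$ lifts to $\aA$ iff $\gamma$ and $\gamma^\beta$ are conjugated. The task therefore becomes showing that the latter condition is equivalent to the stated Murray--von~Neumann equivalence.

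For the necessity direction, if $\beta$ lifts then Lemma~\ref{lem:conjugated} produces partial isometries $v(\sigma) \in \aB \tensor \End(\hH_\sigma)$ with $\gamma_\sigma^\beta = \Ad[v(\sigma)] \circ \gamma_\sigma$ and the symmetric relation. Evaluating both equations at $\one_\aB$ yields $v(\sigma)\gamma_\sigma(\one_\aB)v(\sigma)^* = \beta\gamma_\sigma(\one_\aB)$ and $v(\sigma)^*\beta\gamma_\sigma(\one_\aB)v(\sigma) = \gamma_\sigma(\one_\aB)$. Replacing $v(\sigma)$ by its polar part (as permitted by the WLOG remark used in the proof of Lemma~\ref{lem:unitary_central_cocycle}), this gives a partial isometry in $\aB \tensor \End(\hH_\sigma)$ with source projection $\gamma_\sigma(\one_\aB)$ and range projection $\beta\gamma_\sigma(\one_\aB)$, which is exactly the asserted Murray--von~Neumann equivalence.

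For the sufficiency direction, suppose that for each $\sigma \in \hat G$ there is a partial isometry $w(\sigma) \in \aB \tensor \End(\hH_\sigma)$ with $w(\sigma)^*w(\sigma) = \gamma_\sigma(\one_\aB)$ and $w(\sigma)w(\sigma)^* = \beta\gamma_\sigma(\one_\aB)$; take $w(0) := \one_\aB$. Since $\aA$ is commutative, any element of the form $b \tensor \one_{\hH_\sigma}$ with $b \in \aB$ is central in $\aA \tensor \End(\hH_\sigma)$ and in particular commutes with $w(\sigma)$. Using the formula for $\gamma_\sigma$ recalled above, a direct computation gives
\begin{gather*}
\Ad[w(\sigma)]\bigl(\gamma_\sigma(b)\bigr)
= (b \tensor \one_{\hH_\sigma})\, w(\sigma)w(\sigma)^*
= (b \tensor \one_{\hH_\sigma})\,\beta\gamma_\sigma(\one_\aB)
= \gamma_\sigma^\beta(b),
\end{gather*}
and symmetrically $\Ad[w(\sigma)^*] \circ \gamma_\sigma^\beta = \gamma_\sigma$. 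Thus $\gamma$ and $\gamma^\beta$ are conjugated, the associated 2-cocycle of Lemma~\ref{lem:unitary_central_cocycle} is a coboundary by the preamble, and Theorem~\ref{thm:lift_cohomology} yields that $\beta$ lifts to $\aA$.

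The only substantive point to check is the sufficiency direction, specifically that the conjugation condition for $\gamma$ truly reduces to a Murray--von~Neumann equivalence of projections; this is immediate from the collapse $\gamma_\sigma(b) = (b \tensor \one_{\hH_\sigma})\gamma_\sigma(\one_\aB)$ together with the centrality of $\aB \tensor \one_{\hH_\sigma}$ in $\aA \tensor \End(\hH_\sigma)$. The $\omega$-part of the conjugation of factor systems need not be verified directly, as it is absorbed by the commutative-case triviality of the 2-cocycle used in Theorem~\ref{thm:lift_cohomology}.
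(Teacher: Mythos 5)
Your proposal is correct and follows essentially the same route as the paper: the ``only if'' direction is read off from the conjugacy of factor systems by evaluating the intertwiners at $\one_\aB$, and the ``if'' direction uses the collapse $\gamma_\sigma(b) = (b \tensor \one_{\hH_\sigma})\gamma_\sigma(\one_\aB)$ together with centrality of $\aB \tensor \one_{\hH_\sigma}$ to verify equation~\eqref{eq:comm_gamma}, after which Theorem~\ref{thm:lift_cohomology} and the coboundary observation from \cite[Lemma~3.6]{BCER84} finish the argument. The only cosmetic difference is that you phrase the necessity via Lemma~\ref{lem:conjugated} and a polar-decomposition adjustment, where the paper simply cites Theorem~\ref{thm:lift_conjugated_factor_sys}.
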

\begin{Remark}\quad
	\begin{enumerate}\itemsep=0pt
	\item
		Turning to $K_0(\aB)$, we see that the ``if'' condition in Corollary~\ref{cor:commAbelian} is equivalent to the condition that $K_0(\beta)$ fixes the elements $[\gamma_\sigma(\one_\aB)] \in K_0(\aB)$, $\sigma \in \hat G$, which form a subgroup of $K_0(\aB)$ by Remark~\ref{rem:K0_factor_sys}.
	\item
	Let $\aA = \Cont(P)$ and $\aB = \Cont(X)$ for some compact spaces $P$ and $X$, respectively, and consider~$P$ as a topological principal $G$-bundle over~$X$.
Let $h\colon X \to X$ be the homeomorphism such that $\beta(f) = f \circ h$ for all $f \in \Cont(X)$.
Then the ``if'' condition in Corollary~\ref{cor:commAbelian} states that, for each $\sigma \in \hat G$, the vector bundles determined by $\gamma_\sigma(\one_\aB)$ and $\gamma_\sigma(\one_\aB) \circ h$, respectively, are equivalent.
	\end{enumerate}
\end{Remark}
\begin{proof}
	The ``only if'' direction is clear from Theorem~\ref{thm:lift_conjugated_factor_sys}.
	For the converse direction let, for each $\sigma \in \hat G$, $v(\sigma) \in \aB \tensor \End(\hH_\sigma)$ be a partial isometry such that $v(\sigma)^* v(\sigma) = \gamma_\sigma(\one_\aB)$ and $v(\sigma) v(\sigma)^* = \beta\gamma_\sigma(\one_\aB)$.
	By Theorem~\ref{thm:lift_cohomology} and the discussion prior to the corollary, it remains to prove that equation~\eqref{eq:comm_gamma} holds for all $\sigma \in \hat G$.
	Thus, let $\sigma \in \hat G$.
	Then for each $b \in \aB$ we have
	\begin{align*}
		\gamma^\beta_\sigma(b)
		&=
		\beta\big( \beta^{-1}(b) \gamma_\sigma(\one_\aB) \big)
		= b \beta ( \gamma_\sigma(\one_\aB) )
		= b v(\sigma) v(\sigma)^*
		= v(\sigma) b v(\sigma)^*
		\\
		&=
		v(\sigma) \cdot b\gamma_\sigma(\one_\aB) \cdot v(\sigma)^*
		= v(\sigma) \cdot \gamma_\sigma(b) \cdot v(\sigma)^*
		= ( \Ad[v(\sigma)] \circ \gamma_\sigma ) (b).
	\end{align*}
	By conjugating with $v(\sigma)^*$, we also get $\gamma_\sigma = \Ad[v(\sigma)^*] \circ \gamma_\sigma^\beta$.
\end{proof}

\section{Lifting 1-parameter groups}\label{sec:1-para}

Let $(\beta_t)_{t \in \R}$ be a smooth 1-parameter group of \Star-automorphisms $\beta_t \in \Aut(\aB)$ and let $\delta := D\beta_t$ denote the corresponding \Star-derivation on its smooth domain $\aB^\infty$ (cf.\ Section~\ref{sec:pre}).
In this section we investigate whether there is a smooth 1-parameter group $\big(\hat\beta_t\big)_{t \in \R}$ of \Star-automorphisms $\hat\beta_t \in \Aut(\aA)$ such that, for each $t \in \R$, $\hat\beta_t$ is a lift of $\beta_t$.
In the affirmative case we say that $(\beta_t)_{t \in\R}$ \emph{lifts smoothly} to $\aA$ and that $(\hat\beta_t)_{t \in \R}$ is a \emph{smooth lift} of $(\beta_t)_{t \in \R}$.

\begin{Example}\quad
	\begin{enumerate}\itemsep=0pt
	\item
		We recall from the classical theory of smooth principal bundles that every smooth 1-pa\-ra\-me\-ter group on the base manifold lifts (smoothly) to the total space of the principal bundle.
		For a compact Abelian group $G$ this follows from Corollary~\ref{cor:commAbelian}, because, for each $\sigma \in \hat G$, the projections $\gamma_\sigma(\one_\aB)$ and $\beta\gamma_\sigma(\one_\aB)$ are obviously homotopic, and hence Murray--von~Neumann equivalent.
	\item
		The example in \cite[Section~4]{BCER84} shows that not all 1-parameter groups lift.
	\end{enumerate}
\end{Example}

Let us fix, for each $\sigma \in \Irrep(G)$, a finite-dimensional Hilbert space $\hH_\sigma$ and an isometry $s(\sigma)\in \aA \tensor \End(V_\sigma,\hH_\sigma)$ satisfying $\alpha_g\big(s(\sigma)\big)=s(\sigma) \cdot \sigma_g$ for all $g \in G$; for $1 \in \Irrep(G)$ we take $\hH_1 := \C$ and $s(1) := \one_\aA$.
Throughout the following, we make the standing assumption that the associated factor system $(\hH, \gamma, \omega)$ is smooth in the sense that
\begin{gather*}
	\gamma_\sigma(\aB^\infty)
	\subseteq \aB^\infty \tensor \End(\hH_\sigma)
\qquad
	\text{and}
\qquad
	\omega(\sigma, \pi)
	\in \aB^\infty \tensor \End(\hH_{\sigma \tensor \pi},\hH_\sigma \tensor \hH_\pi)
\end{gather*}
for all $\sigma, \pi \in \Irrep(G)$.

\begin{Lemma}\label{lem:1_param_to_H}
	Let $\big(\hat\beta_t\big)_{t \in \R}$ be a smooth lift of $(\beta_t)_{t \in \R}$ and let $\hat \delta := D\hat\beta_t$ denote the corresponding \Star-derivation on its smooth domain $\aA^\infty$.
	Suppose that, for each $\sigma \in \Irrep(G)$, the isometry $s(\sigma)$ is smooth for $\big(\hat\beta_t\big)_{t \in \R}$, i.e., $s(\sigma) \in \aA^\infty \in \End(V_\sigma, \hH_\sigma)$.
	Then the element
	\begin{gather*}
		H(\sigma) := \hat \delta ( s(\sigma) ) s(\sigma)^* .
	\end{gather*}
	lies in $\aB^\infty \tensor \End(\hH_\sigma)$ and for all $\sigma, \pi \in \Irrep(G)$ we have
	\begin{gather}
		\label{eq:deriv_cond_1}
		\delta \gamma_\sigma(b)
		= \gamma_\sigma \delta(b) + H(\sigma) \gamma_\sigma(b) + \gamma_\sigma(b) H(\sigma)^*
		\qquad \forall b \in \aB^\infty,
		\\
		\label{eq:deriv_cond_2}		
		\delta ( \omega(\sigma,\pi) )
		= H(\sigma) \omega(\sigma,\pi) + \gamma_\sigma (H(\pi) ) \omega(\sigma,\pi) + \omega(\sigma,\pi) H(\sigma \tensor \pi)^*.
	\end{gather}
\end{Lemma}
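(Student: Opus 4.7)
The plan is to apply the $*$-derivation $\hat\delta$ directly to the defining identities
\[
\gamma_\sigma(b) = s(\sigma)(b \tensor \one_{V_\sigma}) s(\sigma)^*, \qquad \omega(\sigma,\pi) = s(\sigma) s(\pi) s(\sigma \tensor \pi)^*
\]
and then to reshape the resulting Leibniz expansions using only two elementary manipulations: the isometry identity $s(\sigma)^* s(\sigma) = \one$, which can be inserted anywhere, and the intertwining $s(\sigma) X = (\gamma_\sigma \tensor \id)(X)\, s(\sigma)$ that holds for every $X$ in $\aB$ (or, functorially, in $\aB \tensor \End(\hH_\pi)$ in a spectator leg).

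For the statement that $H(\sigma) \in \aB^\infty \tensor \End(\hH_\sigma)$, I would first establish $G$-invariance: the hypothesis that $\hat\beta_t$ commutes with every $\alpha_g$ forces $\hat\delta$ to commute with $\alpha_g$, and combined with $\alpha_g(s(\sigma)) = s(\sigma) \cdot \sigma_g$ this gives
\[
\alpha_g(H(\sigma)) = \hat\delta(s(\sigma))\, \sigma_g \sigma_g^* \, s(\sigma)^* = H(\sigma),
\]
so $H(\sigma) \in \aB \tensor \End(\hH_\sigma)$. Smoothness then reduces to noting that $\hat\delta$ preserves the smooth domain $\aA^\infty$ and that $\aA^\infty \cap \aB = \aB^\infty$ (both $\hat\beta_t$ and $\beta_t$ induce the same smoothness structure on $\aB$), with finite-dimensionality of $\End(\hH_\sigma)$ extending this to the tensor product.

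To derive equation~\eqref{eq:deriv_cond_1}, I take $b \in \aB^\infty$ and apply $\hat\delta$ to $\gamma_\sigma(b) = s(\sigma)(b \tensor \one) s(\sigma)^*$. Using that $\gamma_\sigma$ preserves $\aB^\infty$ by the standing smoothness assumption and that $\hat\delta$ agrees with $\delta$ on $\aB^\infty$, the $*$-derivation property yields three terms: inserting $s(\sigma)^* s(\sigma) = \one$ converts $\hat\delta(s(\sigma))(b\tensor\one) s(\sigma)^*$ into $H(\sigma)\gamma_\sigma(b)$, the mirror insertion converts $s(\sigma)(b\tensor\one)\hat\delta(s(\sigma))^*$ into $\gamma_\sigma(b) H(\sigma)^*$, and the middle term is simply $\gamma_\sigma(\delta(b))$.

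Equation~\eqref{eq:deriv_cond_2} is handled in the same spirit. Applying $\hat\delta$ to $\omega(\sigma,\pi) = s(\sigma) s(\pi) s(\sigma\tensor\pi)^*$ produces three summands: the outer two become $H(\sigma)\omega(\sigma,\pi)$ and $\omega(\sigma,\pi) H(\sigma \tensor \pi)^*$ by the same $s^*s = \one$ insertion trick applied at $\sigma$ and at $\sigma \tensor \pi$, respectively. The middle term $s(\sigma) \hat\delta(s(\pi)) s(\sigma \tensor \pi)^*$ is where the intertwining rule is crucial: writing $\hat\delta(s(\pi)) = H(\pi) s(\pi)$ and commuting $H(\pi) \in \aB \tensor \End(\hH_\pi)$ past $s(\sigma)$ via $s(\sigma) H(\pi) = \gamma_\sigma(H(\pi)) s(\sigma)$ (with $\gamma_\sigma$ functorially extended to the extra $\End(\hH_\pi)$ leg) gives $\gamma_\sigma(H(\pi)) \omega(\sigma,\pi)$.

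The main obstacle I anticipate is not conceptual but notational: several tensor legs ($\hH_\sigma$, $\hH_\pi$, $\hH_{\sigma\tensor\pi}$, and the carrier spaces $V_\sigma, V_\pi$) must be tracked consistently in every equation, and the functorial extensions of $\gamma_\sigma$ and of the operation $X \mapsto s(\sigma) X$ to elements of $\aB \tensor \End(\hH_\pi)$ have to be identified with those implicit in the factor system axioms~\eqref{eq:ranges_sys}--\eqref{eq:cocycle_sys}. Once the bookkeeping is set up, both identities reduce to a direct application of the Leibniz rule and the two elementary manipulations mentioned above.
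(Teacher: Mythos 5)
Your proposal is correct and follows essentially the same route as the paper: the paper obtains the three-term expansions by differentiating $\beta_t(\gamma_\sigma(b))=\hat\beta_t(s(\sigma))\beta_t(b)\hat\beta_t(s(\sigma))^*$ and $\beta_t(\omega(\sigma,\pi))=\hat\beta_t(s(\sigma))\hat\beta_t(s(\pi))\hat\beta_t(s(\sigma\tensor\pi))^*$ at $t=0$, which is exactly your Leibniz-rule computation, and then reshapes the terms using $s(\sigma)^*s(\sigma)=\one$ and the intertwining $s(\sigma)H(\pi)=\gamma_\sigma(H(\pi))s(\sigma)$ just as you describe. Your $G$-invariance argument for $H(\sigma)\in\aB^\infty\tensor\End(\hH_\sigma)$ is likewise the content of the paper's observation that $(\aA^\infty)^G=\aB^\infty$ and $\hat\delta(\aA^\infty)\subseteq\aA^\infty$.
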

\begin{proof}
	Let $\sigma \in \Irrep(G)$.
	We first note that $\aA^\infty$ is a \Star-subalgebra of $\aA$ satisfying $\hat \delta(\aA^\infty) \subseteq \aA^\infty$ and $(\aA^\infty)^G = \aB^\infty$, the latter being due to the fact that $\big(\hat \beta_t\big)_{t \in \R}$ is a lift of $(\beta_t)_{t \in \R}$.
	Therefore the assumption on $s(\sigma)$ implies that $H(\sigma) = \hat \delta ( s(\sigma) ) s(\sigma)^* \in \aB^\infty \tensor \End(\hH_\sigma)$.
	Furthermore, for each $t \in \R$ and $b \in \aB$ it follows from the lifting property that
	\begin{gather*}
		\beta_t ( \gamma_\sigma(b) )
		= \beta_t ( s(\sigma) b s(\sigma)^* )
		= \hat\beta_t ( s(\sigma) ) \beta_t(b) \hat\beta_t ( s(\sigma) )^*.
	\end{gather*}
	Taking the derivative at $t=0$, for each $b \in \aB^\infty$ we get equation~\eqref{eq:deriv_cond_1}:
	\begin{align*}
		\delta \gamma_\sigma(b)
		&=
		\hat \delta ( s(\sigma) ) b s(\sigma)^* + s(\sigma) \delta(b) s(\sigma)^* + s(\sigma) b \hat \delta ( s(\sigma) )^*
		\\
		&= H(\sigma) \gamma_\sigma(b) + \gamma_\sigma \delta(b) + \gamma_\sigma(b) H(\sigma)^*.
	\end{align*}
	Now, let $\sigma, \pi \in \Irrep(G)$.
	Just as above, for each $t \in \R$ we find that
	\begin{gather*}
		\beta_t ( \omega(\sigma, \pi) )
		= \hat \beta_t ( s(\sigma) ) \hat\beta_t ( s(\pi) ) \hat\beta_t ( s(\sigma \tensor \pi) )^*.
	\end{gather*}
	Taking the derivative at $t=0$ yields equation~\eqref{eq:deriv_cond_2}:
	\begin{align*}
		\delta ( \omega(\sigma, \pi) )
		&=
		\hat \delta ( s(\sigma) ) s(\pi) s(\sigma \tensor \pi)^*
		+ s(\sigma) \hat \delta ( s(\pi) ) s(\sigma \tensor \pi)^*
		+ s(\sigma) s(\pi) \hat \delta ( s(\sigma \tensor \pi) )^*
		\\
		&=
		H(\sigma) \omega(\sigma, \pi)
		+ \gamma_\sigma ( H(\pi) ) \omega(\sigma, \pi)
		+ \omega(\sigma, \pi) H(\sigma \tensor \pi)^*.
		\tag*{\qed}	
	\end{align*}
	\renewcommand{\qed}{}
\end{proof}

\begin{Lemma}\label{lem:H_to_1_param}
	Let $H(\sigma) \in \aB^\infty \tensor \End(\hH_\sigma)$, $\sigma \in \Irrep(G)$, be a family, normalized to $H(1) = 0$, satisfying equations~\eqref{eq:deriv_cond_1} and~\eqref{eq:deriv_cond_2}.
	Then there is a smooth lift $\big(\hat \beta_t\big)_{t \in \R}$ of $(\beta_t)_{t \in \R}$ with
	\begin{gather*}
		D\hat \beta_t \bigl( \Tr ( y s(\sigma) ) \bigr) = \Tr \bigl( D\beta(y) s(\sigma) + x H(\sigma) s(\sigma) \bigr)
	\end{gather*}
	for all $y \in \aB^\infty \tensor \End(\hH_\sigma, V_\sigma)$ and $\sigma \in \Irrep(G)$.
\end{Lemma}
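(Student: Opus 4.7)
The plan is to construct, for each $\sigma \in \Irrep(G)$, a smooth curve $t \mapsto v_t(\sigma) \in \aB^\infty \tensor \End(\hH_\sigma)$ of partial isometries with $v_0(\sigma) = \gamma_\sigma(\one_\aB)$ and $\dot v_0(\sigma) = H(\sigma)$, which implements the conjugacy $(\hH, \gamma, \omega) \sim \big(\hH, \gamma^{\beta_t}, \omega^{\beta_t}\big)$ for every $t \in \R$, and then to mimic the construction of the previous section by putting
\begin{gather*}
\hat\beta_t\bigl(\Tr(ys(\sigma))\bigr) := \Tr\bigl(\beta_t(y) v_t(\sigma) s(\sigma)\bigr)
\end{gather*}
in analogy with~\eqref{eq:beta}. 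Theorem~\ref{thm:lift_conjugated_factor_sys} will then guarantee that each $\hat\beta_t$ is a well-defined \Star-automorphism lifting $\beta_t$, while smoothness and the group law will follow directly from the construction of $v_t(\sigma)$.

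The family $v_t(\sigma)$ is obtained by solving the linear ODE
\begin{gather*}
\dot v_t(\sigma) = \delta\bigl(v_t(\sigma)\bigr) + v_t(\sigma) H(\sigma), \qquad v_0(\sigma) = \gamma_\sigma(\one_\aB).
\end{gather*}
Substituting the ansatz $v_t(\sigma) = \beta_t\bigl(u_t(\sigma)\bigr)$ reduces this to $\dot u_t(\sigma) = u_t(\sigma) \beta_t^{-1}\bigl(H(\sigma)\bigr)$ with $u_0(\sigma) = \gamma_\sigma(\one_\aB)$, which admits a globally defined solution via the time-ordered exponential. Since $s \mapsto \beta_s^{-1}(H(\sigma))$ is smooth with values in $\aB^\infty \tensor \End(\hH_\sigma)$, the resulting curve $v_t(\sigma)$ is smooth in $t$ with values in $\aB^\infty \tensor \End(\hH_\sigma)$. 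The normalization $H(1) = 0$ forces $v_t(1) = \one_\aB$ for all $t \in \R$.

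The heart of the proof is to verify, for every $t \in \R$, the partial-isometry identities $v_t(\sigma)^* v_t(\sigma) = \gamma_\sigma(\one_\aB)$ and $v_t(\sigma) v_t(\sigma)^* = \beta_t\bigl(\gamma_\sigma(\one_\aB)\bigr)$ together with the two conjugacy identities
\begin{gather*}
\Ad[v_t(\sigma)] \circ \gamma_\sigma = \gamma_\sigma^{\beta_t}, \qquad v_t(\sigma) \gamma_\sigma\bigl(v_t(\pi)\bigr) \omega(\sigma,\pi) = \omega^{\beta_t}(\sigma,\pi) v_t(\sigma \tensor \pi).
\end{gather*}
Each of these identities holds trivially at $t = 0$ by the initial condition, so by uniqueness of solutions to linear ODEs in Banach spaces it suffices to show that both sides of each equation satisfy the same first-order differential equation in $t$. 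For the partial-isometry identities and the first conjugacy relation this reduces, after a direct calculation, to~\eqref{eq:deriv_cond_1}; for the cocycle identity it additionally requires~\eqref{eq:deriv_cond_2} and~\eqref{eq:cocycle_sys}. This bookkeeping is the main technical obstacle, although the manipulations are routine.

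With these facts in place, Theorem~\ref{thm:lift_conjugated_factor_sys} furnishes, for every $t \in \R$, a \Star-automorphism $\hat\beta_t \in \Aut(\aA)$ lifting $\beta_t$ and given on $\aA_f$ by the explicit formula above. Smoothness of $t \mapsto \hat\beta_t(x)$ for $x \in \aA_f$ is immediate from the smoothness of $t \mapsto \beta_t(y)$ and $t \mapsto v_t(\sigma)$, and the group law $\hat\beta_{t+s} = \hat\beta_t \circ \hat\beta_s$ translates via the explicit formula to the cocycle identity $v_{t+s}(\sigma) = \beta_t\bigl(v_s(\sigma)\bigr) v_t(\sigma)$, which is again verified by checking that both sides solve the same ODE in $t$ with matching values at $t = 0$. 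Finally, differentiating the defining formula at $t = 0$ and using $v_0(\sigma) s(\sigma) = \gamma_\sigma(\one_\aB) s(\sigma) = s(\sigma)$ together with $\dot v_0(\sigma) = H(\sigma)$ yields
\begin{gather*}
D\hat\beta_t\bigl(\Tr(ys(\sigma))\bigr) = \Tr\bigl(\delta(y) s(\sigma) + y H(\sigma) s(\sigma)\bigr),
\end{gather*}
which is the formula asserted in the lemma.
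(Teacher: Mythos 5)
Your proposal follows the same architecture as the paper's proof: construct a smooth family of partial isometries $v_t(\sigma)$ as the solution of a linear ODE, show by uniqueness of solutions that $\big(\hH,\gamma^{\beta_t},\omega^{\beta_t}\big)=v_t(\hH,\gamma,\omega)v_t^*$, feed this into Theorem~\ref{thm:lift_conjugated_factor_sys} and formula~\eqref{eq:beta}, obtain the group law from a $\beta$-cocycle identity for $v_t(\sigma)$, and differentiate at $t=0$. The gap lies in the ODE you choose. The paper first replaces $H(\sigma)$ by $H(\sigma)\gamma_\sigma(\one_\aB)$ (a normalization you omit) and then solves $\dot v_t(\sigma)=\beta_t\big(H(\sigma)\big)v_t(\sigma)$ in the right ideal $\big(\aB\tensor\End(\hH_\sigma)\big)\gamma_\sigma(\one_\aB)$ --- \emph{left} multiplication by the transported generator --- for which \cite{daleckii02} yields the cocycle identities $v_{s+t}(\sigma)=\beta_t\big(v_s(\sigma)\big)v_t(\sigma)$ and $v_{-t}(\sigma)=\beta_t^{-1}\big(v_t(\sigma)^*\big)$; these two identities are precisely what give $v_t(\sigma)^*v_t(\sigma)=\gamma_\sigma(\one_\aB)$ and $v_t(\sigma)v_t(\sigma)^*=\beta_t\big(\gamma_\sigma(\one_\aB)\big)$.

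With your evolution $\dot v_t(\sigma)=\delta\big(v_t(\sigma)\big)+v_t(\sigma)H(\sigma)$ this breaks down. Write $P:=\gamma_\sigma(\one_\aB)$. Equation~\eqref{eq:deriv_cond_1} at $b=\one_\aB$ gives $\delta(P)=H(\sigma)P+PH(\sigma)^*$, and combining this with $\delta(P)=\delta(P)P+P\delta(P)$ only forces the compression $PH(\sigma)P$ to be skew-adjoint; it does not force $H(\sigma)+H(\sigma)^*$ to vanish. A direct computation with your ODE then gives, after the normalization $H(\sigma)=H(\sigma)P$, that $\frac{d}{dt}\big(v_t(\sigma)^*v_t(\sigma)\big)\big|_{t=0}=\delta(P)+H(\sigma)^*P+PH(\sigma)$; the last two terms cancel by the skew-adjointness of $PH(\sigma)P$, leaving $\delta(P)=H(\sigma)+H(\sigma)^*$, which is nonzero whenever $\delta$ moves the projection $\gamma_\sigma(\one_\aB)$. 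Hence your $v_t(\sigma)$ is in general not a partial isometry with initial projection $\gamma_\sigma(\one_\aB)$, so it does not implement a conjugacy of factor systems, and the ``both sides solve the same ODE'' bookkeeping you defer as routine cannot close (the initial values fail to match because the solution does not stay in the right ideal). For the same reason $\dot v_0(\sigma)=\delta(P)+PH(\sigma)=H(\sigma)+H(\sigma)^*(\one-P)$ rather than $H(\sigma)$, contradicting your stated initial data; this particular discrepancy happens to be harmless for the final formula, since $(\one-P)s(\sigma)=0$, but the failure of the isometry relation is not. Replacing your evolution by the paper's left-multiplicative one and adding the normalization step repairs the argument.
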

\begin{proof}
	Without loss of generality we may assume that $H(\sigma) = H(\sigma) \gamma_\sigma(\one_\aB)$ for all $\sigma \in \Irrep(G)$, because replacing $H(\sigma)$ by $H(\sigma) \gamma_\sigma(\one_\aB)$ does neither change equation~\eqref{eq:deriv_cond_1} nor equation~\eqref{eq:deriv_cond_2}.
	The task is now to prove that $\big(\hH, \gamma^{\beta_t}, \omega^{\beta_t}\big) \sim (\hH, \gamma, \omega)$ for all $t \in \R$.
	For this purpose, let $\sigma \in \Irrep(G)$.
	To handle the first conjugacy condition, we examine the differential equation $\dot v_t(\sigma) = \beta_t ( H(\sigma) ) v_t(\sigma)$ in $\aB \tensor \End(\hH_\sigma) \gamma_\sigma(\one_\aB)$ with initial condition $v_0(\sigma) = \gamma_\sigma(\one_\aB)$.
	Indeed, by~\cite[Section~3]{daleckii02}, it admits a unique solution $v_t(\sigma) \in \aB \tensor \End(\hH_\sigma) \gamma_\sigma(\one_\aB)$ satisfying the $\beta$-cocycle equations
	\begin{gather}\label{eq:proof_1_cocycle}
		v_{s+t}(\sigma)
		= \beta_t ( v_s(\sigma) ) v_t(\sigma)
		\qquad
		\text{and}
		\qquad
		v_{-t}(\sigma)
		= \beta_t^{-1} ( v_t(\sigma)^* )
	\end{gather}
for all $s, t \in \R$.
	In particular, each $v_t(\sigma)$, $t \in \R$, is a partial isometry with initial and final projection given by $v_t(\sigma)^* v_t(\sigma) = \gamma_\sigma(\one_\aB)$ and $v_t(\sigma) v_t(\sigma)^* = \beta_t ( \gamma_\sigma(\one_\aB) )$, respectively.
	Furthermore, for each $b \in \aB^\infty$ we consider the function $t \mapsto b_t := \gamma^{\beta_t}_\sigma(b) = \beta_t\gamma_\sigma\beta_t^{-1}(b)$ which is clearly smooth and satisfies the differential equation
	\begin{align*}
		\dot b_t
		&
		= \beta_t\delta \gamma_\sigma\beta_t^{-1}(b) - \beta_t\gamma_\sigma\delta \beta_t^{-1}(b)
		= \beta_t\bigl( \delta \gamma_\sigma \beta_t^{-1}(b) - \gamma_\sigma \delta \beta_t^{-1}(b) \bigr)
		\\
		\overset{\eqref{eq:deriv_cond_1}}&=
		\beta_t \bigl( H(\sigma) \gamma_\sigma \beta_t^{-1}(b) + \gamma_\sigma\beta_t^{-1}(b) H(\sigma)^* \bigr)
		=
		\beta_t ( H(\sigma) ) b_t + b_t \beta_t ( H(\sigma) )
	\end{align*}
	for all $t \in \R$ with initial condition $b_0 = \gamma_\sigma(b)$.
	As the same equation and initial condition are satisfied by the function $t \mapsto v_t(\sigma) \gamma_\sigma(b) v_t(\sigma)^*$ we must have
	\begin{gather}		\label{eq:proof_conjugation_gamma}
		\gamma_\sigma^{\beta_t}(b) = v_t(\sigma) \gamma_\sigma(b) v_t(\sigma)^*
	\end{gather}
	for all $t \in \R$ and $b \in \aB$.
	By conjugating with $v_t(\sigma)^*$, we also obtain $\gamma_\sigma = \Ad[v_t(\sigma)^*] \circ \gamma_\sigma^{\beta_t}$.
	Next, let $\sigma, \pi \in \Irrep(G)$.
	To deal with the second conjugacy condition, we look at the function $t \mapsto \omega_t := v_t(\sigma) \gamma_\sigma ( v(\pi) ) \omega(\sigma, \pi) v_t(\sigma \tensor \pi)^*$.
	Evidently, this function is smooth.
	Moreover, it satisfies the differential equation
	\begin{align*}
		\dot \omega_t
		&=
		\dot v_t(\sigma) \gamma_\sigma ( v_t(\pi) ) \omega v_t(\sigma \tensor \pi)^*
		+ v_t(\sigma) \gamma_\sigma ( \dot v_t(\pi) ) \omega v_t(\sigma \tensor \pi)^*
		+ v_t(\sigma) \gamma_\sigma ( v_t(\pi) ) \omega \dot v_t(\sigma \tensor \pi)^*
		\\
		&=
		\beta_t ( H(\sigma) ) \omega_t
		+ v_t(\sigma) \gamma_\sigma \beta_t ( H(\pi) ) v_t(\sigma)^* \omega_t
		+ \omega_t H(\sigma \tensor \pi)^*
		\\
		\overset{\eqref{eq:proof_conjugation_gamma}}&=
		\beta_t ( H(\sigma) ) \omega_t
		+ \beta_t\gamma_\sigma( ( H(\pi) ) \omega_t
		+ \omega_t H(\sigma \tensor \pi)^*
	\end{align*}
	for all $t \in \R$ with initial condition $\omega_0 = \omega(\sigma, \pi)$.
	The same equation and initial condition are satisfied by the function $t \mapsto \omega^{\beta_t}(\sigma, \pi) = \hat\beta_t ( s(\sigma) s(\pi) s(\sigma \tensor \pi)^* )$. It follows that
	\begin{gather*}
		\omega^{\beta_t}(\sigma,\pi)
		= v_t(\sigma) \gamma_\sigma ( v(\pi) ) \omega(\sigma, \pi) v_t(\sigma \tensor \pi)^*
	\end{gather*}
	for all $t \in \R$.
	Summarizing, we have shown that $(\hH, \gamma^{\beta_t}, \omega^{\beta_t}) = v_t(\hH, \gamma, \omega)v_t^*$ for all $t \in \R$.
	Hence, for each $t \in \R$, Theorem~\ref{thm:lift_conjugated_factor_sys} provides us with a lift $\hat\beta_t$ of $\beta_t$ satisfying
	\begin{gather}\label{eq:proof_lift}
		\hat\beta_t \bigl( \Tr ( y s(\sigma) ) \bigr) = \Tr \bigl( \beta_t(y) v_t(\sigma) s(\sigma) \bigr)
	\end{gather}
	for all $y \in \aB \tensor \End(\hH_\sigma, V_\sigma)$ and $\sigma \in \Irrep(G)$ (cf.\ equation~\eqref{eq:beta}).
	In addition, equations~\eqref{eq:proof_1_cocycle} imply that $(\hat\beta_t)_{t \in \R}$ constitutes a 1-parameter group as required.
	Finally, it is immediate from equation~\eqref{eq:proof_lift} that, for each $y \in \aB^\infty \tensor \End(\hH_\sigma, V_\sigma)$, the element $\Tr \bigl( ys(\sigma) \bigr)$ is smooth and that
	\begin{align*}
		D\hat\beta\bigl( \Tr( ys(\sigma) \bigr)
		&
		= \Tr \bigl( D\beta(y) v_0(\sigma) s(\sigma) \bigr) + \Tr \bigl( y \dot v_0(\sigma) s(\sigma) \bigr)
		\\
		\overset{\eqref{eq:ranges_sys}}&=
		\Tr \bigl( D\beta(y) s(\sigma) \bigr) + \Tr \bigl( y H(\sigma) s(\sigma) \bigr)
	\end{align*}
	for all $\sigma \in \Irrep(G)$ and $y \in \aB^\infty \tensor \End(\hH_\sigma, V_\sigma)$.
\end{proof}

Combining Lemmas~\ref{lem:1_param_to_H} and~\ref{lem:H_to_1_param}, we have established:

\begin{Theorem}\label{thm:lift_1_param}
	Let $(\aA, G, \alpha)$ be a free C\Star-dynamical system with fixed point algebra $\aB$.
	Furthermore, let $(\beta_t)_{t \in \R}$ be a smooth $1$-parameter group of \Star-automorphisms $\beta_t \in \Aut(\aB)$.
	Then the following statements are equivalent:
	\begin{enumerate}\itemsep=0pt
	\item[$(a)$]
		$(\beta_t)_{t \in \R}$ lifts smoothly to $\aA$.
	\item[$(b)$]
		There is a family $H(\sigma) \in \aB^\infty \tensor \End(\hH_\sigma, V_\sigma)$, $\sigma \in \Irrep(G)$, normalized to $H(1)= 0$, satisfying equations~\eqref{eq:deriv_cond_1} and~\eqref{eq:deriv_cond_2} for all $\sigma, \pi \in \Irrep(G)$ and $b \in \aB^\infty$.
	\end{enumerate}
\end{Theorem}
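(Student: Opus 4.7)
The plan is to derive Theorem~\ref{thm:lift_1_param} from Lemmas~\ref{lem:1_param_to_H} and~\ref{lem:H_to_1_param}, which together supply the two directions of the asserted equivalence. The implication $(b)\Rightarrow(a)$ is immediate: Lemma~\ref{lem:H_to_1_param} explicitly constructs a smooth lift of $(\beta_t)_{t \in \R}$ from a family $H(\sigma)$ satisfying equations~\eqref{eq:deriv_cond_1} and~\eqref{eq:deriv_cond_2}, and nothing more is required.

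For $(a)\Rightarrow(b)$ the plan is to invoke Lemma~\ref{lem:1_param_to_H}, but this requires the generators $s(\sigma)$ to lie in the smooth domain $\aA^\infty$ of the lift $(\hat\beta_t)_{t \in \R}$ — a property not guaranteed by the standing smoothness assumption, which concerns only $(\beta_t)$ on $\aB$. To bridge this gap I would first upgrade the generators to smooth ones. Since $\hat\beta_t$ commutes with $\alpha_g$, the smooth domain $\aA^\infty$ is a $G$-invariant dense unital \Star-subalgebra of $\aA$, so each isotypic component $A(\sigma)\cap\aA^\infty$ is dense in $A(\sigma)$. Smoothing $s(\sigma)$ via convolution with a compactly supported mollifier using $\hat\beta_t$ preserves $G$-equivariance, and a polar-decomposition adjustment then yields smooth equivariant isometries $\tilde s(\sigma) \in \aA^\infty \tensor \End(V_\sigma,\hH_\sigma)$. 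Applying Lemma~\ref{lem:1_param_to_H} to these $\tilde s(\sigma)$ produces a family $\tilde H(\sigma) := \hat\delta(\tilde s(\sigma))\tilde s(\sigma)^*$ satisfying~\eqref{eq:deriv_cond_1},~\eqref{eq:deriv_cond_2} for the associated factor system $(\tilde\hH,\tilde\gamma,\tilde\omega)$.

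I would then transport this family back to the originally fixed factor system via the partial isometries $w(\sigma) := s(\sigma)\tilde s(\sigma)^*$ realizing $(\tilde\hH,\tilde\gamma,\tilde\omega)\sim(\hH,\gamma,\omega)$, the transport formula being dictated by the conjugacy relations of Definition~\ref{def:conjugancy}; a direct substitution then shows that the resulting family $H(\sigma)$ still satisfies the two differential equations, and the normalization $H(1)=0$ is automatic from $s(1)=\one_\aA$. The main obstacle lies entirely in this smoothing step: one must produce smooth equivariant isometries in $\aA^\infty$ while simultaneously preserving both $G$-equivariance and isometricity. Once this is accomplished, the theorem reduces to a direct concatenation of the two preceding lemmas.
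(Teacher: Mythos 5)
Your proposal follows the paper's route exactly: the paper's entire proof of Theorem~\ref{thm:lift_1_param} is the single remark that it follows by ``combining Lemmas~\ref{lem:1_param_to_H} and~\ref{lem:H_to_1_param}'', with $(b)\Rightarrow(a)$ being precisely Lemma~\ref{lem:H_to_1_param} and $(a)\Rightarrow(b)$ being Lemma~\ref{lem:1_param_to_H}. What you add is a discussion of the hypothesis of Lemma~\ref{lem:1_param_to_H} that each $s(\sigma)$ be smooth for the lift $\big(\hat\beta_t\big)_{t\in\R}$; the paper does not discharge this hypothesis, so your observation that it requires an argument is a genuine and worthwhile point. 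Your mollification-plus-polar-decomposition construction does produce equivariant isometries $\tilde s(\sigma)\in\aA^\infty\tensor\End(V_\sigma,\hH_\sigma)$: averaging $\hat\beta_t\big(s(\sigma)\big)$ against a mollifier preserves equivariance because $\hat\beta_t$ commutes with $\alpha$, the result is norm-close to $s(\sigma)$, and $\aA^\infty$ is stable under the holomorphic functional calculus, so the polar correction stays smooth and equivariant.

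The one step that still needs care is the transport back to the originally fixed factor system. The conjugating partial isometries $w(\sigma)=s(\sigma)\tilde s(\sigma)^*$ need not lie in $\aB^\infty\tensor\End(\hH_\sigma)$: the original $s(\sigma)$ is a priori smooth for neither $(\beta_t)_{t\in\R}$ nor $\big(\hat\beta_t\big)_{t\in\R}$, and the standing smoothness assumption only controls $\gamma_\sigma$ and $\omega(\sigma,\pi)$, not $s(\sigma)$ itself. Consequently the transported family, which would have to be $H(\sigma)=\delta\big(w(\sigma)\big)w(\sigma)^*+w(\sigma)\tilde H(\sigma)w(\sigma)^*$, is not obviously well defined as an element of $\aB^\infty\tensor\End(\hH_\sigma)$. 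You should either argue that $w(\sigma)$ is smooth for $(\beta_t)_{t\in\R}$, or read condition $(b)$ as asserting the existence of such a family for \emph{some} smooth factor system --- which is all that Lemma~\ref{lem:H_to_1_param} needs for the converse direction and is arguably the intended reading. With that caveat, your argument is a faithful, and indeed more careful, version of the paper's.
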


\section{An Atiyah sequence for noncommutative principal bundles}\label{sec:Atiyah}

In this section we generalize the classical Atiyah sequence in equation~\eqref{eq:Atiyah} to the setting of free C\Star-dynamical systems.
In addition, we explain how this can be used to produce characteristic classes.

To this end, we consider a free C\Star-dynamical system $(\aA,G,\alpha)$ with fixed point algebra~$\aB$ and we fix a dense unital \Star-subalgebra $\aB_0 \subseteq \aB$.
Again, for each $\sigma \in \Irrep(G)$ we choose a finite-dimensional Hilbert space $\hH_\sigma$ and an isometry $s(\sigma)$ in $\aA \tensor \End(V_\sigma,\hH_\sigma)$ satisfying $\alpha_g(s(\sigma))=s(\sigma) (\one_\aA \tensor \sigma_g)$ for all $g \in G$; for $1 \in \Irrep(G)$, we take $\hH_1 := \C$ and $s(1) := \one_\aA$.
We denote by $(\hH, \gamma, \omega)$ the associated factor system and assume that
\begin{gather*}
	\gamma_\sigma(\aB_0) \subseteq \aB_0 \tensor \End(\hH_\sigma)
	\qquad \text{and}
\qquad
\omega(\sigma, \pi) \in \aB_0 \tensor \End(\hH_{\sigma \tensor \pi},\hH_\sigma \tensor \hH_\pi)
\end{gather*}
for all $\sigma, \pi \in \Irrep(G)$.
Similar arguments as in~\cite[Section~5.1]{SchWa20} establish that
\begin{gather*}
	\aA_0 := \Span \bigl\{ \Tr ( y s(\sigma) ) \colon \sigma \in \Irrep(G), \, y \in \aB_0 \tensor \End(\hH_\sigma,V_\sigma) \bigr\}
\end{gather*}
is a dense, $\alpha$-invariant, and unital \Star-subalgebra of $\aA$ such that $\aA_0^G = \aB_0$ (cf.\ Section~\ref{sec:facsys}).
The algebra~$\aA_0$ depends on the choice of the isometries $s(\sigma)$, $\sigma \in \Irrep(G)$.
However, any other choice of isometries $s'(\sigma)$, $\sigma \in \Irrep(G)$, such that $s'(\sigma) \in \aA_0 \tensor \End(V_\sigma, \hH_\sigma')$ yields the same subalgebra.
In~fact, $\aA_0$ is the smallest \Star-subalgebra of $\aA$ such that $\aA_0 \supseteq \aB_0$ and $s(\sigma) \in \aA_0 \tensor \End(V_\sigma, \hH_\sigma)$ for all $\sigma \in \Irrep(G)$.

\subsection{The associated Atiyah sequence}

For a \Star-derivation $\delta$ on $\aB_0$ we say that $\delta$ \emph{lifts to $\aA_0$} if there is a \Star-derivation $\hat \delta$ on $\aA_0$ that extends $\delta$ and commutes with all $\alpha_g$, $g \in G$.
In this case we call $\hat \delta$ a \emph{lift of $\delta$}.
As an immediate consequence of Theorem~\ref{thm:lift_1_param} we obtain:
\begin{Corollary}\label{cor:liftder}
	Let $\delta \in \der(\aB_0)$.
	Then the following statements are equivalent:
\begin{enumerate}\itemsep=0pt
\item[$(a)$]
			$\delta$ lifts to $\aA_0$.
\item[$(b)$] 
There is a family $H(\sigma) \in \aB_0 \tensor \End(\hH_\sigma, V_\sigma)$, $\sigma \in \Irrep(G)$, normalized to $H(1)= 0$, satisfying equations~\eqref{eq:deriv_cond_1} and~\eqref{eq:deriv_cond_2} for all $\sigma, \pi \in \Irrep(G)$ and $b \in \aB_0$.
	\end{enumerate}
	Moreover, under the assumptions of $(b)$ a lift of $\delta$ is given by linearly extending
	\begin{gather*}
		\hat \delta\bigl( \Tr ( xs(\sigma) ) \bigr) := \Tr\bigl( \delta(x)s(\sigma) + x H(\sigma) s(\sigma) \bigr)
	\end{gather*}
	for $\sigma \in \Irrep(G)$ and $x \in \aB_0 \tensor \End(\hH_\sigma,V_\sigma)$.
\end{Corollary}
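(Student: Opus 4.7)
The plan is to adapt the arguments of Lemmas~\ref{lem:1_param_to_H} and~\ref{lem:H_to_1_param}, which treat smooth 1-parameter groups, to the purely algebraic setting of $*$-derivations on the dense subalgebra $\aA_0$. The integration step in Lemma~\ref{lem:H_to_1_param}, where a partial-isometry cocycle was produced by solving a linear ODE, is no longer needed: only the infinitesimal data survives, and the entire argument becomes algebraic. This is why the statement can reasonably be advertised as an ``immediate consequence'' of Theorem~\ref{thm:lift_1_param}.

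For the direction (a)$\Rightarrow$(b), suppose $\hat\delta$ is a lift of $\delta$, and, following the recipe of Lemma~\ref{lem:1_param_to_H}, set $H(\sigma) := \hat\delta(s(\sigma)) s(\sigma)^*$. The $G$\nobreakdash-equivariance of $\hat\delta$ combined with $\alpha_g(s(\sigma)) = s(\sigma) \cdot \sigma_g$ gives $\alpha_g\bigl(\hat\delta(s(\sigma))\bigr) = \hat\delta(s(\sigma)) \cdot \sigma_g$, so that $H(\sigma)$ is $G$-invariant and lies in $\aB_0 \tensor \End(\hH_\sigma)$; the normalization $H(1) = 0$ is immediate from $s(1) = \one_\aA$. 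Applying $\hat\delta$ to the defining identities $\gamma_\sigma(b) = s(\sigma) b s(\sigma)^*$ and $\omega(\sigma,\pi) = s(\sigma) s(\pi) s(\sigma \tensor \pi)^*$ via the Leibniz rule, and rearranging using the isometry relation $s(\sigma)^* s(\sigma) = \id_{V_\sigma}$ to insert factors where convenient, reproduces equations~\eqref{eq:deriv_cond_1} and~\eqref{eq:deriv_cond_2} verbatim, in full analogy with Lemma~\ref{lem:1_param_to_H}.

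For the converse, (b)$\Rightarrow$(a), define $\hat\delta$ on $\aA_0$ by the proposed formula. Since the map $y \mapsto \Tr(y s(\sigma))$ is bijective from $\aB_0 \tensor \End(\hH_\sigma, V_\sigma) \gamma_\sigma(\one_\aB)$ onto $A(\sigma) \cap \aA_0$, and since replacing $H(\sigma)$ by $H(\sigma) \gamma_\sigma(\one_\aB)$ alters neither~\eqref{eq:deriv_cond_1} nor~\eqref{eq:deriv_cond_2}, the formula is well defined. The identities $s(1) = \one_\aA$ and $H(1)=0$ yield $\hat\delta|_{\aB_0} = \delta$, and $G$-equivariance is immediate from~\eqref{eq:A_0action} together with the constancy of $\sigma_g$ under $\delta$. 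The real content is to verify that $\hat\delta$ is a $*$-derivation: multiplicativity with respect to Leibniz is checked on products $\Tr(y_\sigma s(\sigma)) \cdot \Tr(y_\pi s(\pi))$ by expanding both sides via the multiplication formula~\eqref{eq:A_0mul} and then eliminating cross terms using~\eqref{eq:deriv_cond_1},~\eqref{eq:deriv_cond_2}, and the cocycle property~\eqref{eq:cocycle_sys}; the $*$-compatibility is checked using the involution formula from Section~\ref{sec:facsys} involving the elements $J(y)$.

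The main obstacle lies precisely in these last two verifications. They parallel Lemma~\ref{lem:prop_beta}(2)--(3), but at the infinitesimal level, and demand careful bookkeeping of tensor legs and of the cancellations between the two error terms $H(\sigma)\gamma_\sigma(\,\cdot\,)$ and $\gamma_\sigma(\,\cdot\,) H(\sigma)^*$ arising from~\eqref{eq:deriv_cond_1}, and similarly for~\eqref{eq:deriv_cond_2}. Once these are in hand, the explicit formula displayed in the corollary drops out of the construction.
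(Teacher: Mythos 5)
Your route differs from the paper's: the paper gives no separate argument for this corollary, presenting it as an ``immediate consequence'' of Theorem~\ref{thm:lift_1_param}, whereas you propose to rerun the proofs of Lemmas~\ref{lem:1_param_to_H} and~\ref{lem:H_to_1_param} purely algebraically, discarding the ODE step. That is a defensible --- arguably more honest --- choice, since a general $\delta \in \der(\aB_0)$ need not integrate to a smooth $1$-parameter group, so the corollary cannot literally be read off from the theorem; what is really being invoked is the algebraic content of those two lemmas. Your direction (a)$\Rightarrow$(b) is complete and correct: with $H(\sigma) := \hat\delta(s(\sigma))s(\sigma)^*$, equivariance places $H(\sigma)$ in $\aA_0^G \tensor \End(\hH_\sigma) = \aB_0 \tensor \End(\hH_\sigma)$, and the Leibniz rule applied to $s(\sigma)bs(\sigma)^*$ and $s(\sigma)s(\pi)s(\sigma\tensor\pi)^*$ reproduces~\eqref{eq:deriv_cond_1} and~\eqref{eq:deriv_cond_2} exactly as in Lemma~\ref{lem:1_param_to_H}.

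The gap is in (b)$\Rightarrow$(a): you defer the verification that $\hat\delta$ is a \Star-derivation, and the roadmap you give for it would not close the argument as stated. Expanding $\hat\delta(x_\sigma x_\pi) - \hat\delta(x_\sigma)x_\pi - x_\sigma\hat\delta(x_\pi)$ via~\eqref{eq:A_0mul} and substituting~\eqref{eq:deriv_cond_1} and~\eqref{eq:deriv_cond_2} does \emph{not} cancel all cross terms; one is left with the residual $\Tr\bigl( y_\sigma \gamma_\sigma(y_\pi) \bigl[ (H(\sigma)+H(\sigma)^*)\omega(\sigma,\pi) + \omega(\sigma,\pi)(H(\sigma\tensor\pi)+H(\sigma\tensor\pi)^*) \bigr] s(\sigma\tensor\pi) \bigr)$, and the cocycle identity~\eqref{eq:cocycle_sys} that you cite plays no role in removing it. What kills it is a different mechanism: the case $b=\one_\aB$ of~\eqref{eq:deriv_cond_1}, together with the fact that $p\,\delta(p)\,p = 0$ for any projection $p$, yields $\gamma_\sigma(\one_\aB)\bigl(H(\sigma)+H(\sigma)^*\bigr)\gamma_\sigma(\one_\aB) = 0$, and the support relations $\gamma_\sigma(y_\pi)\gamma_\sigma(\one_\aB) = \gamma_\sigma(y_\pi)$, $\gamma_\sigma(\one_\aB)\omega(\sigma,\pi) = \omega(\sigma,\pi) = \omega(\sigma,\pi)\gamma_{\sigma\tensor\pi}(\one_\aB)$, $\gamma_{\sigma\tensor\pi}(\one_\aB)s(\sigma\tensor\pi) = s(\sigma\tensor\pi)$ coming from~\eqref{eq:ranges_sys} then annihilate both summands. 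A similar supplement is needed for the involution check via $J(y)$. So the skeleton is right, but the decisive cancellations --- the part you yourself identify as the main obstacle --- are asserted rather than proved, and the identities you list would not suffice by themselves.
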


\begin{Remark}\label{rem:cleftAbelian}
Suppose that $G$ is compact Abelian and that $(\aA,G,\alpha)$ is cleft.
This is essentially the setting studied by Batty, Carey, Evans, and Robinson in~\cite{BCER84}, but without the factor system terminology.
Combining Corollary~\ref{cor:liftder} with the results from Section~\ref{sec:cpt_abelian}, we obtain a generalization of the main results in~\cite{BCER84} to the setting of free actions of compact groups.
\end{Remark}

To formulate a generalized Atiyah sequence, we consider the Lie algebra
\begin{gather*}
	\der_G(\aA_0) := \{ \delta \in \der(\aA_0)\colon \alpha_g \circ \delta = \delta \circ \alpha_g \ \forall g \in G \}.
\end{gather*}
This Lie algebra admits the short exact sequence
\begin{gather*}\label{eq:AtiyahNC}
	0 \longrightarrow \gau(\aA_0) \longrightarrow \der_G(\aA_0)\longrightarrow \der(\aB_0)_{(\gamma,\omega)} \longrightarrow 0,
	\\
	\shortintertext{where}
	\gau(\aA_0) := \bigl\{\delta \in \der_G(\aA_0) : \delta_{\mid \aB_0} = 0 \bigr\}
\end{gather*}
is the Lie algebra of \emph{infinitesimal gauge transformations} of $\aA_0$ and $\der(\aB_0)_{(\gamma,\omega)}$ is the Lie subalgebra of all \Star-derivations of $\aB_0$ that lift to $\aA_0$.
By Corollary~\ref{cor:liftder}, we have
\begin{gather*}
	\der(\aB_0)_{(\gamma,\omega)} = \{ \delta \in \der(\aB_0)\colon \text{$\delta$ satisfies the condition in Corollary~\ref{cor:liftder}(b)} \}.
\end{gather*}
Furthermore, it follows from Corollary~\ref{cor:liftder} that $\gau(\aA_0)$ can be identified with the Lie algebra, let us say, $H(\hH, \gamma, \omega)$, consisting of all families of skew-symmetric elements $H(\sigma) \in \aB_0 \tensor \End(\hH_\sigma)$, $\sigma \in \Irrep(G)$, satisfying the equations
\begin{gather*}
		 H(\sigma) \gamma_\sigma(b) + \gamma_\sigma(b) H(\sigma)^* = 0
		\qquad 	\forall b \in \aB_0,\\
		 H(\sigma) \omega(\sigma,\pi) + \gamma_\sigma\big(H(\pi)\big) \omega(\sigma,\pi) + \omega(\sigma,\pi) H(\sigma \tensor \pi)^* = 0
	\end{gather*}
for all $\sigma, \pi \in \Irrep(G)$.

\begin{Remark}\label{rem:At_cleft}
Suppose that $(\aA,G,\alpha)$ is cleft with compact Abelian $G$ and that $\omega(\sigma,\pi)$ lies in $\C \cdot \one_\aB$ for all $\sigma,\pi \in \hat G = \Hom(G,\mathbb{T})$. Then $H(\hH, \gamma, \omega)$ may be realized as the Lie algebra of all crossed homomorphisms $\hat G \to \aB_0^{\text{skew}}$:
	\begin{gather*}
		Z^1\big( \hat G, \aB_0^{\text{skew}} \big) := \big\{ H\colon \hat G \to \aB_0^{\text{skew}}\colon H(\sigma+\pi) = H(\sigma) + \gamma_\sigma(H_\pi) \ \forall \sigma,\pi \in \hat G \big\}.
	\end{gather*}
\end{Remark}

\begin{Definition}\label{def:connection}
A linear section $\chi\colon \der(\aB_0)_{(\gamma,\omega)} \to \der_G(\aA_0)$ of the Atiyah sequence in equation~\eqref{eq:AtiyahNCgroup} is called a \emph{connection} of $\aA_0$.
\end{Definition}

\begin{Remark}
If $q\colon P \to M$ is a smooth principal bundle with structure group $G$, then the previous definition reproduces, up to a suitable completion, the classical setting of connection 1-forms when restricted to sections $\mathcal{V}(M) \to \mathcal{V}(P)^G$ that are $C^\infty(M)$-linear (see, e.g., \cite[Chapter~XII]{Kob69II}).
\end{Remark}

Given a connection $\chi\colon \der(\aB_0)_{(\gamma,\omega)} \to \der_G(\aA_0)$ of $\aA_0$, we may utilize Lecomte's Chern--Weil homomorphism to associate characteristic classes with $(\aA,G,\alpha)$.
More precisely, for each $k \in\mathbb{N}_0$ we get a natural map
\begin{gather*}
	C_k \colon \ \text{Sym}^k(\gau(\aA_0),\aB_0)^{\der_G(\aA_0)}\rightarrow H^{2k}\big( \der(\aB_0)_{(\gamma,\omega)},\aB_0\big), \qquad f \mapsto \frac{1}{k!}[f_\chi],
\end{gather*}
which, as a matter of fact, is independent of the choice of $\chi$.
Here, $\text{Sym}^k(\gau(\aA_0),\aB_0)$ stands for~the space of symmetric $k$-linear maps $\gau(\aA_0)^k \to \aB_0$ and $f_\chi$ is the $2k$-cocycle in \linebreak $C^{2k}\big(\der(\aB_0)_{(\gamma,\omega)},\aB_0\big)$ associated with $\chi$ (cf.~\cite{Lec85,Wa18}).

\subsection{An example: quantum 3-tori}\label{sec:Q3T}

Let $\theta$ be a real skew-symmetric $3\times 3$-matrix and, for $1 \le k, \ell \le 3$, put $\lambda_{k,\ell} := \exp(2\pi \imath \theta_{k,\ell})$ for short.
In the following we consider the quantum 3-torus $\mathbb A^3_\theta$, which is the universal C\Star-algebra with unitary generators $u_1$, $u_2$, $u_3$ satisfying the relation $u_k u_\ell = \lambda_{k,\ell} u_\ell u_k$ for all $1 \le k,\ell \le 3$.
The classical torus $\mathbb T^3$ acts naturally on $\mathbb A^3_\theta$ via the \Star-automorphisms given by $\tau(u_k) = z_k \cdot u_k$ for all $z = (z_1,z_1,z_3) \in \mathbb T^3$ and $1 \le k \le 3$.
This is the so-called \emph{gauge action}, whose generators are the \Star-derivations $\delta_k(u_\ell) = 2 \pi \imath \delta_{k, \ell} \cdot u_\ell$, $1 \le k,\ell \le 3$, where $\delta_{k,\ell}$ denotes the Kronecker delta.

Our study revolves around the restricted gauge action $\alpha\colon \mathbb T \to \Aut\big(\mathbb A^3_\theta\big)$ defined by
\begin{gather*}
	\alpha_z(u_1) := u_1,
\qquad
	\alpha_z(u_2) := u_2,
\qquad
	\alpha_z(u_3) :=z \cdot u_3
\end{gather*}
for all $z \in \mathbb T$.
Its fixed point algebra is the quantum 2-torus $\mathbb A^2_{\theta'}$ generated by the unitaries $u_1$ and $u_2$, where $\theta'$ denotes the real skew-symmetric $2\times 2$-matrix with upper right off-diagonal entry~$\theta_{12}$.
More generally, for each $k \in \Z$, the corresponding isotypic component is $\mathbb A^3_\theta(k)$ takes the form $u_3^k \mathbb A^2_{\theta'}$.
In particular, the C\Star-dynamical system $\big(\mathbb A^3_\theta, \mathbb T, \alpha\big)$ is cleft and therefore free.
The factor system associated with the unitaries $u(k) := u_3^k$, $k \in \Z$, is given by the following data:
For $k \in \Z $ we have $\gamma_k= \tau_z$ with $z = \big(\lambda_{31}^k, \lambda_{32}^k,1\big)$, and for $k,l \in \Z $ the cocycle $\omega(k,l)$ computes as~$\smash{\one_{\mathbb A^2_{\theta'}}}$.

Next, we look at the dense unital \Star-subalgebra $\aB_0$ of $\mathbb A^2_{\theta'}$ generated by all noncommutative polynomials in $u_1$ and $u_2$.
Then $\aA_0$ is given by the dense unital \Star-algebra of $\mathbb A^3_\theta$ generated by all noncommutative polynomials in $u_1$, $u_2$, and $u_3$, as is easily seen.
Furthermore, it follows from~\cite[Section~3.4]{DVMiKriMaMi01} that the \Star-derivations of $\aB_0$ split as a semidirect product of inner and outer \Star-derivations, i.e., $\der(\aB_0) \cong \text{Inn}(\der(\aB_0))\rtimes \text{Out}(\der(\aB_0))$.
If moreover $\theta_{12}$ is irrational, then $\text{Out}(\der(\aB_0))$ is linearly generated by ${\delta_1}_{\mid \aB_0}$ and ${\delta_2}_{\mid \aB_0}$.
Obviously, ${\delta_1}_{\mid \aB_0}$ and ${\delta_2}_{\mid \aB_0}$ may be lifted to ${\delta_1}_{\mid \aA_0}$ and~${\delta_2}_{\mid \aB_0}$, respectively.
It is also clear that each inner \Star-derivation of $\aB_0$ lifts to~$\aA_0$.
In~consequence, $\der(\aB_0)_{(\gamma,\omega)} = \der(\aB_0)$. Since we also have $Z^1\big( \Z, \aB_0^{\text{skew}} \big) \cong \aB_0^{\text{skew}}$ via the evaluation map $f \mapsto f(1)$, it follows that the associated Atiyah sequence reads as
\begin{gather*}
	0 \longrightarrow \aB_0^{\text{skew}} \longrightarrow \der_G(\aA_0) \longrightarrow \der(\aB_0) \longrightarrow 0
\end{gather*}
(cf.~Remark~\ref{rem:At_cleft}).
Finally, a moment's thought shows that this sequence is split, and so it does, unfortunately, only give trivial Chern--Weil--Lecomte classes.
However, if needed, one may associate secondary characteristic classes as described in~\cite{Wa18}.

\subsection{Associating connections and curvature}\label{sec:infobj}

Connection 1-forms are the fundamental tool in the theory of smooth principal bundles and give rise to the notion of connections on associated vector bundles.
Such a connection or, more precisely, its induced covariant derivative is an operator that can differentiate sections of each associated vector bundle along tangent directions in the base manifold.
	
In this section we discuss suitable generalizations of theses notions to the C\Star-algebraic setting of noncommutative principal bundles.
In particular, we provide explicit formulas for connections and curvature on associated noncommutative vector bundles.
To do this, we proceed as follows:

For each finite-dimensional representation $(\sigma,V_\sigma)$ of $G$ we consider the associated natural inner product $\aA_0 \tensor \End(V_\sigma)-\aB_0$-bimodule
\begin{gather*}
\Gamma_{\aA_0}(V_\sigma) := \Gamma_{\aA_0}(\sigma,V_\sigma) := \{ x \in \aA_0 \tensor V_\sigma\colon (\alpha_g \tensor \sigma_g)(x) = x \ \forall g\in G\}
	\end{gather*}
	with left $\aA_0 \tensor \End(V_\sigma)$-valued inner product~$\lprod{\aA_0 \tensor \End(V_\sigma)}{\cdot,\cdot}$ and right $\aB_0$-valued inner product $\rprod{\aB_0}{\cdot,\cdot}$
	defined on simple tensors by
	\begin{gather*}
		\lprod{\aA_0 \tensor \End(V_\sigma)}{a \tensor v, b \tensor w} := ab^* \tensor \ketbra{v}{w} \qquad \text{and} \qquad \rprod{\aB_0}{a \tensor v, b \tensor w} := \langle v,w \rangle a^*b 
	\end{gather*}
	respectively. Notably, the bimodule structure and the inner products are related by the compatibility condition $\lprod{\aA_0 \tensor \End(V_\sigma)}{x,y} \acts z = x \acts \rprod{\aB_0}{y,z}$ for all $x,y,z \in \Gamma_{\aA_0}(V_\sigma)$.
	
\begin{Remark}The space $\Gamma_{\aA_0}(V_\sigma)$ may be interpreted as associated noncommutative vector bundles (cf.~\cite{Wa12M}).
\end{Remark}

The following result provides a criterion for ensuring that the space $\Gamma_{\aA_0}(V_\sigma)$ admits a so-called standard right-module frame (see, e.g.,~\cite[Section~2]{Rieffel08}).

\begin{Lemma}\label{lem:partition}
	Let $(\sigma,V_\sigma)$ be a finite-dimensional representation of $G$.
	Then there are elements $s_1, \dots, s_d \in \Gamma_{\aA_0}(V_\sigma)$ such that
	\begin{gather*}
		\sum_{k=1}^d \lprod{\aA_0 \tensor \End(V_\sigma)}{s_k,s_k}= \one_{\aA \tensor \End(V_\sigma)}.
	\end{gather*}
	In particular, the reproducing formula $x = \sum_{k=1}^d s_k \acts \rprod{\aB_0}{s_k,x}$ holds for all $x \in \Gamma_{\aA_0}(V_\sigma)$.
	If $(\aA, G, \alpha)$ is cleft, we find such elements with $\rprod{\aB_0}{s_k,s_l} = \delta_{k,l} \cdot \one_\aB$ for all $1\leq k,l \leq d$.
\end{Lemma}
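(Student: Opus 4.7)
The natural strategy is to extract the frame directly from the isometry $s(\sigma) \in \aA_0 \tensor \End(V_\sigma, \hH_\sigma)$ supplied by the factor system. Recall that $s(\sigma)^* s(\sigma) = \one_{\aA \tensor \End(V_\sigma)}$ and $\alpha_g(s(\sigma)) = s(\sigma) \cdot \sigma_g$. Fixing an orthonormal basis $\{\xi_1, \ldots, \xi_d\}$ of $\hH_\sigma$, I would define
\[
s_k := (\id_\aA \tensor \ev_{\xi_k})\bigl(s(\sigma)^*\bigr) \in \aA_0 \tensor V_\sigma,
\]
that is, the image of $\xi_k$ under $s(\sigma)^*$ viewed as a linear map $\hH_\sigma \to \aA_0 \tensor V_\sigma$.

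The first step is to check $s_k \in \Gamma_{\aA_0}(V_\sigma)$. Taking adjoints of the equivariance relation gives $\alpha_g(s(\sigma)^*) = \sigma_g^{-1} \cdot s(\sigma)^*$ with $\sigma_g^{-1}$ acting by postcomposition on $V_\sigma$, and a short computation then yields $(\alpha_g \tensor \sigma_g)(s_k) = s_k$. The second step is to expand $s(\sigma) = \sum_{k,i} a_{ki} \tensor \ketbra{\xi_k}{e_i}$ with respect to an orthonormal basis $\{e_i\}$ of $V_\sigma$. Then $s_k = \sum_i a_{ki}^* \tensor e_i$, and unwinding the definition of the left inner product gives
\[
\sum_{k=1}^d \lprod{\aA \tensor \End(V_\sigma)}{s_k, s_k} = \sum_{i, j}\biggl( \sum_{k=1}^d a_{ki}^* a_{kj}\biggr) \tensor \ketbra{e_i}{e_j}.
\]
The same coefficients $\sum_k a_{ki}^* a_{kj}$ appear when expanding $s(\sigma)^* s(\sigma) = \one$, which forces $\sum_k a_{ki}^* a_{kj} = \delta_{ij} \one_\aA$ and hence the desired identity. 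The reproducing formula is then immediate from the bimodule compatibility $\lprod{}{x, y} \acts z = x \acts \rprod{}{y, z}$ applied with $x = y = s_k$ and summed over $k$.

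For the cleft case, $s(\sigma) = u(\sigma)$ is a unitary in $\aA_0 \tensor \End(V_\sigma)$ (so one may take $\hH_\sigma = V_\sigma$), and the additional relation $u(\sigma) u(\sigma)^* = \one$ translates into $\sum_i u_{ki} u_{li}^* = \delta_{kl} \one_\aA$ for its matrix coefficients. Comparing with the expansion $\rprod{\aB_0}{s_k, s_l} = \sum_i u_{ki} u_{li}^*$ yields the claimed orthonormality, and a short calculation using $\alpha_g(u(\sigma)) = u(\sigma) \sigma_g$ together with unitarity of $\sigma_g$ confirms that these scalars are $\alpha$-invariant and therefore lie in $\aB$.

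I do not anticipate any genuine obstacle: the lemma is essentially a translation of the isometry (resp.\ unitarity) condition on $s(\sigma)$ (resp.\ $u(\sigma)$) into the language of the two natural inner products on $\Gamma_{\aA_0}(V_\sigma)$. The only place requiring real care is the bookkeeping between the convention $\alpha_g(s(\sigma)) = s(\sigma) \cdot \sigma_g$ used for the factor-system isometry and the invariance convention $(\alpha_g \tensor \sigma_g)(x) = x$ defining $\Gamma_{\aA_0}(V_\sigma)$, which is reconciled via the adjoint computation in the first step.
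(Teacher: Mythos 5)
Your proposal is correct and follows essentially the same route as the paper: the frame elements are exactly the columns of $s(\sigma)^*$ with respect to an orthonormal basis of $\hH_\sigma$, the frame identity is the isometry relation $s(\sigma)^*s(\sigma)=\one$, and the cleft case uses unitarity of $u(\sigma)$. You merely spell out the coefficient bookkeeping and the equivariance check that the paper leaves implicit.
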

\begin{proof}
	Let $e_1,\dots,e_d$ be an orthonormal bases of $\hH_\sigma$ and let $s_k \in \aA_0 \tensor V_\sigma$, $1 \leq k \leq d$, be the columns of $s(\sigma)^* \in \aA_0 \tensor \End(\hH_\sigma,V_\sigma) $.
	Then $s_k \in \Gamma_{\aA_0}(V_\sigma)$ for all $1 \leq k \leq d$, which is due to the fact that $\alpha_g(s(\sigma)) = s(\sigma) (\one_\aA \tensor \sigma_g)$ for all $g \in G$.
	In addition, a moment's thought reveals that
	\begin{align*}
		\sum_{k=1}^d \lprod{\aA_0 \tensor \End(V_\sigma)}{s_k,s_k} = s(\sigma)^* s(\sigma) = \one_{\aA \tensor \End(V_\sigma)}.
	\end{align*}
	If $(\aA, G, \alpha)$ is cleft, then $\rprod{\aB_0}{s_k,s_l} = \delta_{k,l} \cdot \one_\aB$ for all $1\leq k,l \leq d$ follows from the fact that~$s(\sigma)$ is unitary. The verification of the reproducing formula poses no trouble and is left to the reader.
\end{proof}

In what follows, we consider a fixed finite-dimensional representation $(\sigma,V_\sigma)$ of $G$ and elements $s_1, \dots, s_d \in \Gamma_{\aA_0}(V_\sigma)$ as in Lemma~\ref{lem:partition}.

\begin{Lemma}\label{lem:conn}
	Let $(\sigma,V_\sigma)$ be a finite-dimensional representation of $G$.
	If $\delta \colon \aB_0 \to \aB_0$ is a~\Star-de\-ri\-va\-tion, then the linear map
	\begin{gather*}
			\nabla_\delta^\sigma \colon \ \Gamma_{\aA_0}(\sigma) \to \Gamma_{\aA_0}(\sigma), \qquad \nabla_\delta^\sigma(x) := \sum_{k=1}^d s_k \acts \delta \left( \rprod{\aB_0}{s_k,x} \right)
	\end{gather*}
satisfies the following equations
	\begin{gather}
		\label{eq:Leibniz}
		\nabla_\delta^\sigma(x \acts b) = \nabla_\delta^\sigma(x) \acts b + x \acts \delta(b),
		\\
		\label{eq:commutator}
		\delta \left( \rprod{\aB_0}{x,y} \right) = \rprod{\aB_0}{\nabla_\delta^\sigma(x),y} + \rprod{\aB_0}{x,\nabla_\delta^\sigma(y)}
	\end{gather}
for all $x,y \in \Gamma_{\aA_0}(\sigma)$ and $b\in \aB_0$.
\end{Lemma}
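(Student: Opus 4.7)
The plan is to verify both identities by direct computation, using the reproducing formula from Lemma~\ref{lem:partition} together with the Leibniz rule for $\delta$ and the $*$-property of the right $\aB_0$-valued inner product. A preliminary observation is that $\nabla_\delta^\sigma$ is well-defined as a map $\Gamma_{\aA_0}(\sigma) \to \Gamma_{\aA_0}(\sigma)$, since $\rprod{\aB_0}{s_k, x} \in \aB_0$, hence $\delta \bigl( \rprod{\aB_0}{s_k, x} \bigr) \in \aB_0$, and the right $\aB_0$-action preserves $\Gamma_{\aA_0}(\sigma)$.

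For the Leibniz identity~\eqref{eq:Leibniz}, I would use right $\aB_0$-linearity of the inner product in the second slot to write $\rprod{\aB_0}{s_k, x \cdot b} = \rprod{\aB_0}{s_k, x} \, b$, apply the Leibniz rule for $\delta$, and then split the resulting sum into two pieces. The first piece assembles to $\nabla_\delta^\sigma(x) \cdot b$ by definition, and the second piece contains $\sum_k s_k \cdot \rprod{\aB_0}{s_k, x}$, which collapses to $x$ by the reproducing formula in Lemma~\ref{lem:partition}.

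For the compatibility~\eqref{eq:commutator}, I would expand both terms on the right. Using the identities $\rprod{\aB_0}{a \cdot b, c} = b^* \rprod{\aB_0}{a, c}$ and $\rprod{\aB_0}{x, y}^* = \rprod{\aB_0}{y, x}$, together with the fact that $\delta$ is a $*$-derivation (so $\delta(c)^* = \delta(c^*)$), one obtains
\begin{gather*}
\rprod{\aB_0}{\nabla_\delta^\sigma(x), y} + \rprod{\aB_0}{x, \nabla_\delta^\sigma(y)} = \sum_{k=1}^d \Bigl( \delta \bigl( \rprod{\aB_0}{x, s_k} \bigr) \rprod{\aB_0}{s_k, y} + \rprod{\aB_0}{x, s_k} \, \delta \bigl( \rprod{\aB_0}{s_k, y} \bigr) \Bigr).
\end{gather*}
The Leibniz rule for $\delta$ combines each summand into $\delta \bigl( \rprod{\aB_0}{x, s_k} \rprod{\aB_0}{s_k, y} \bigr)$, and a final application of the reproducing formula—this time via $y = \sum_k s_k \cdot \rprod{\aB_0}{s_k, y}$ inserted into the second slot of $\rprod{\aB_0}{x, y}$—yields $\delta \bigl( \rprod{\aB_0}{x, y} \bigr)$.

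There is no real obstacle here: once Lemma~\ref{lem:partition} is available, the verification is a routine frame calculation. The only point to watch is consistency with the convention that $\rprod{\aB_0}{\cdot, \cdot}$ is conjugate-linear in the first slot and linear in the second, which dictates the appearance of the adjoint when pulling $\delta \bigl( \rprod{\aB_0}{s_k, x} \bigr)$ out of the first argument and is the reason the $*$-derivation hypothesis on $\delta$ is essential.
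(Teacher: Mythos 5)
Your proposal is correct and follows essentially the same route as the paper: both parts are verified by the same frame computation, using right $\aB_0$-linearity and the Leibniz rule for~\eqref{eq:Leibniz}, and the \Star-derivation property together with $\rprod{\aB_0}{x,s_k}\cdot\rprod{\aB_0}{s_k,y} = \rprod{\aB_0}{x, s_k \acts \rprod{\aB_0}{s_k,y}}$ and the reproducing formula for~\eqref{eq:commutator}. Your remark on where the \Star-derivation hypothesis enters matches the paper's own note on its second equality.
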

\begin{proof}
	Let $x,y \in \Gamma_{\aA_0}(\sigma)$ and $b\in \aB_0$.
	Using first the right $\aB_0$-linearity of $\rprod{\aB_0}{\cdot,\cdot}$, second the derivation property of $\delta$, and finally the reproducing formula for $x$, we obtain
	\begin{align*}
		\nabla_\delta^\sigma(x \acts b)
		&=
		\sum_{k=1}^d s_k \acts \delta \left( \rprod{\aB_0}{s_k,x \acts b} \right)
		=
		\sum_{k=1}^d s_k \acts \delta \left( \rprod{\aB_0}{s_k,x} \cdot b \right)
		\\
		&= \sum_{k=1}^d s_k \acts \left( \delta \left( \rprod{\aB_0}{s_k,x} \right) \cdot b
		+
		\rprod{\aB_0}{s_k,x} \cdot \delta(b) \right)
		=
		\nabla_\delta^\sigma(x) \acts b + x \acts \delta(b).
	\end{align*}
	Likewise, it may be concluded that
	\begin{gather*}
		\rprod{\aB_0}{\nabla_\delta^\sigma(x),y}
		+
		\rprod{\aB_0}{x,\nabla_\delta^\sigma(y)}
		=
		\sum_{k=1}^d \left( \rprod{\aB_0}{s_k \acts \delta \left( \rprod{\aB_0}{s_k,x} \right),y}
		+
		\rprod{\aB_0}{x,s_k \acts \delta \left( \rprod{\aB_0}{s_k,y} \right)} \right)
		\\
		\quad{} =
		 \sum_{k=1}^d \left( \delta \left( \rprod{\aB_0}{x,s_k} \right) \cdot \rprod{\aB_0}{s_k,y}
		 + \rprod{\aB_0}{x,s_k} \cdot \delta \left( \rprod{\aB_0}{s_k,y} \right) \right)
		 =
		 \sum_{k=1}^d \delta \left( \rprod{\aB_0}{x,s_k} \cdot \rprod{\aB_0}{s_k,y} \right)
		 \\
		 \quad{} = \sum_{k=1}^d \delta \left( \rprod{\aB_0}{x,s_k \acts \rprod{\aB_0}{s_k,y}} \right)
		 =
		 \delta \left( \rprod{\aB_0}{x, \sum_{k=1}^d s_k \acts \rprod{\aB_0}{s_k,y}} \right)
		 =
		 \delta \left( \rprod{\aB_0}{x,y} \right),
	\end{gather*}
	where for the second equality we have exploited the fact that $\delta$ is a \Star-derivation.
\end{proof}

We refer to equation~\eqref{eq:Leibniz} as the Leibniz rule and to equation~\eqref{eq:commutator} as the metric compatibility between the inner product and the map $\nabla_\delta^\sigma$.
In addition, we draw attention to the fact that Lemma~\ref{lem:conn} entails that the map
\begin{gather*}
	\nabla^\sigma \colon \ \der(\aB_0) \times \Gamma_{\aA_0}(\sigma) \to \Gamma_{\aA_0}(\sigma), \qquad \nabla^\sigma(\delta,x) := \nabla_\delta^\sigma(x)
\end{gather*}
is a so-called metric connection over $\Gamma_{\aA_0}(\sigma)$ (see, e.g.,~\cite{Jo17,DuVio88}).

\begin{Corollary}
	For each finite-dimensional representation $(\sigma,V_\sigma)$ of $G$ the associated vector bundle $\Gamma_{\aA_0}(\sigma)$ admits a metric connection.
\end{Corollary}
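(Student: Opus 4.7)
The plan is to derive this corollary as a direct consequence of the two preceding lemmas, treating it essentially as a packaging statement. First, I would appeal to Lemma~\ref{lem:partition} to obtain, for the given finite-dimensional representation $(\sigma, V_\sigma)$, a standard right-module frame $s_1, \dots, s_d \in \Gamma_{\aA_0}(\sigma)$ satisfying the resolution of the identity in $\aA \tensor \End(V_\sigma)$. This is the only input needed to write down an explicit candidate connection, since Lemma~\ref{lem:partition} guarantees that such a frame exists without any further hypothesis on $(\aA, G, \alpha)$.

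Second, with such a frame fixed, I would define
\begin{gather*}
	\nabla^\sigma \colon \ \der(\aB_0) \times \Gamma_{\aA_0}(\sigma) \to \Gamma_{\aA_0}(\sigma),
	\qquad
	\nabla^\sigma(\delta, x) := \sum_{k=1}^d s_k \acts \delta\bigl( \rprod{\aB_0}{s_k, x} \bigr),
\end{gather*}
and verify the three defining properties of a metric connection in the sense of~\cite{Jo17, DuVio88}: $\C$-linearity in the derivation argument, the Leibniz rule~\eqref{eq:Leibniz} in the module argument, and metric compatibility~\eqref{eq:commutator} with the right $\aB_0$-valued inner product. The first property is transparent, since $\delta \mapsto \delta\bigl(\rprod{\aB_0}{s_k, x}\bigr)$ is manifestly $\C$-linear and the formula for $\nabla^\sigma$ is a finite sum of such terms. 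The remaining two properties are precisely the content of Lemma~\ref{lem:conn}, applied pointwise in $\delta$.

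There is no genuine obstacle: the heavy lifting, namely the existence of a standard right-module frame and the verification of Leibniz and metric compatibility, has already been carried out in Lemmas~\ref{lem:partition} and~\ref{lem:conn}. One conceptual point worth flagging in passing is that the connection $\nabla^\sigma$ so constructed depends on the choice of frame $s_1, \dots, s_d$; different choices yield different connections, differing by an element in the space of $\aB_0$-linear maps $\Gamma_{\aA_0}(\sigma) \to \Gamma_{\aA_0}(\sigma)$ compatible with the inner product. However, the corollary only asserts existence, so no canonicality needs to be addressed, and the proof concludes immediately.
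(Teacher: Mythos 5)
Your proposal is correct and follows exactly the route the paper takes: the frame from Lemma~\ref{lem:partition} is used to define $\nabla^\sigma_\delta(x) = \sum_{k=1}^d s_k \acts \delta\bigl(\rprod{\aB_0}{s_k,x}\bigr)$, and Lemma~\ref{lem:conn} supplies the Leibniz rule~\eqref{eq:Leibniz} and metric compatibility~\eqref{eq:commutator}, which is precisely how the paper deduces the corollary in the paragraph preceding it. Your added remark on the frame-dependence of $\nabla^\sigma$ is accurate but, as you note, immaterial for the existence statement.
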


Next, let $\chi\colon \der(\aB_0)_{(\gamma,\omega)} \to \der_G(\aA_0)$ be a connection of $\aA_0$.
It is straightforward to check that, for each $\delta \in \der(\aB_0)_{(\gamma,\omega)}$, the map
\begin{gather*}
	(\nabla^\chi)^\sigma_\delta \colon \ \Gamma_{\aA_0}(\sigma) \to \Gamma_{\aA_0}(\sigma),
	\qquad
	(\nabla^\chi)^\sigma_\delta(x) := \chi(\delta) \tensor \id_{V_\sigma}(x)
\end{gather*}
is well-defined, linear, and satisfies the Leibniz rule.
Moreover, since
\begin{gather*}
	(\nabla^\chi)^\sigma_\delta(x)
	=
	\sum_{k=1}^d \chi(\delta) \tensor \id_{V_\sigma}(s_k)\acts \rprod{\aB_0}{s_k,x} + \nabla^\sigma_\delta(x) \qquad \forall x \in \Gamma_{\aA_0}(\sigma),
\end{gather*}
similar computations as in the proof of Lemma~\ref{lem:conn} yield the following result:

\begin{Corollary}
	Let $\chi\colon \der(\aB_0)_{(\gamma,\omega)} \to \der_G(\aA_0)$ be a connection of $\aA_0$.
Furthermore, let $(\sigma,V_\sigma)$ be a finite-dimensional representation of $G$.
	Then
	\begin{gather*}
\nabla^{\chi,\sigma}\colon \ \der(\aB_0)_{(\gamma,\omega)} \times \Gamma_{\aA_0}(\sigma) \to \Gamma_{\aA_0}(\sigma),
		\qquad
\nabla^{\chi,\sigma}(\delta,x) := (\nabla^\chi)^\sigma_\delta(x)
\end{gather*}
	is a metric connection if and only if
	\begin{gather*}
		\rprod{\aB_0}{\chi(\delta) \tensor \id_{V_\sigma}(x),y} + \rprod{\aB_0}{x,\chi(\delta) \tensor \id_{V_\sigma}(y)} = 0
	\end{gather*}
	for all $\delta \in \der(\aB_0)_{(\gamma,\omega)}$ and $x,y \in \Gamma_{\aA_0}(\sigma)$.
\end{Corollary}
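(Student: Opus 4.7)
My plan is to mimic the proof of Lemma~\ref{lem:conn}, exploiting the decomposition
\[
(\nabla^\chi)^\sigma_\delta(x) = \sum_{k=1}^d (\chi(\delta) \tensor \id_{V_\sigma})(s_k) \acts \rprod{\aB_0}{s_k,x} + \nabla^\sigma_\delta(x)
\]
recorded just before the statement. The Leibniz rule for $(\nabla^\chi)^\sigma_\delta$ is already noted in the text, so it suffices to analyze the metric compatibility condition
\[
\delta\bigl(\rprod{\aB_0}{x,y}\bigr) = \rprod{\aB_0}{(\nabla^\chi)^\sigma_\delta x,\,y} + \rprod{\aB_0}{x,\,(\nabla^\chi)^\sigma_\delta y}
\]
for $\delta \in \der(\aB_0)_{(\gamma,\omega)}$ and $x,y \in \Gamma_{\aA_0}(\sigma)$.

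First I would substitute the decomposition into both right-hand inner products and expand using the $\aB_0$-sesquilinearity of $\rprod{\aB_0}{\cdot,\cdot}$. By Lemma~\ref{lem:conn} applied to the natural connection $\nabla^\sigma$, the contribution coming from the $\nabla^\sigma_\delta$-summands is exactly $\delta\bigl(\rprod{\aB_0}{x,y}\bigr)$ and therefore cancels the left-hand side; what survives is the ``cross term''
\[
\sum_{k=1}^d \Bigl[\rprod{\aB_0}{s_k,x}^*\,\rprod{\aB_0}{(\chi(\delta) \tensor \id)(s_k),\,y} + \rprod{\aB_0}{x,\,(\chi(\delta) \tensor \id)(s_k)}\,\rprod{\aB_0}{s_k,y}\Bigr].
\]
Metric compatibility is then equivalent to the vanishing of this cross term.

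Next I would reabsorb the $\aB_0$-scalars $\rprod{\aB_0}{s_k,x}$ and $\rprod{\aB_0}{s_k,y}$ back inside the inner products via the identities $b^*\rprod{\aB_0}{z,w} = \rprod{\aB_0}{z\acts b,\,w}$ and $\rprod{\aB_0}{z,w}\,b = \rprod{\aB_0}{z,\,w\acts b}$, and then invoke the reproducing formulas $x = \sum_k s_k \acts \rprod{\aB_0}{s_k,x}$ and $y = \sum_k s_k \acts \rprod{\aB_0}{s_k,y}$ to recollect the cross term as precisely $\rprod{\aB_0}{(\chi(\delta) \tensor \id_{V_\sigma})(x),\,y} + \rprod{\aB_0}{x,\,(\chi(\delta) \tensor \id_{V_\sigma})(y)}$, which is the expression appearing in the statement. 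The stated equivalence follows.

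The only subtlety deserving care is the interplay between the conjugate-linearity of $\rprod{\aB_0}{\cdot,\cdot}$ in its first slot and the \Star-derivation property $\chi(\delta)(a)^* = \chi(\delta)(a^*)$, which is precisely where the \Star-derivation hypothesis is invoked in the proof of Lemma~\ref{lem:conn}. I do not anticipate any further obstacle beyond this bookkeeping.
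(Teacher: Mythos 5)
Your overall strategy---substitute the displayed decomposition of $(\nabla^\chi)^\sigma_\delta$, cancel the $\nabla^\sigma_\delta$-contributions against $\delta\bigl(\rprod{\aB_0}{x,y}\bigr)$ via Lemma~\ref{lem:conn}, and reduce metric compatibility to the vanishing of the cross term---is exactly the computation the paper gestures at with ``similar computations as in the proof of Lemma~\ref{lem:conn}''. Up to and including the identification of the cross term
\begin{gather*}
	\sum_{k=1}^d \Bigl( \rprod{\aB_0}{\bigl(\chi(\delta)\tensor\id_{V_\sigma}\bigr)(s_k)\acts\rprod{\aB_0}{s_k,x},\,y} + \rprod{\aB_0}{x,\,\bigl(\chi(\delta)\tensor\id_{V_\sigma}\bigr)(s_k)\acts\rprod{\aB_0}{s_k,y}} \Bigr)
\end{gather*}
your argument is sound.

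The final ``recollection'' step, however, does not go through. To pass from $\sum_{k}\bigl(\chi(\delta)\tensor\id_{V_\sigma}\bigr)(s_k)\acts\rprod{\aB_0}{s_k,x}$ to $\bigl(\chi(\delta)\tensor\id_{V_\sigma}\bigr)(x)$ via the reproducing formula you would need $\chi(\delta)\tensor\id_{V_\sigma}$ to be right $\aB_0$-linear, but it is a derivation: $\bigl(\chi(\delta)\tensor\id_{V_\sigma}\bigr)(s_k\acts b) = \bigl(\chi(\delta)\tensor\id_{V_\sigma}\bigr)(s_k)\acts b + s_k\acts\delta(b)$. The two expressions therefore differ by exactly $\nabla^\sigma_\delta(x)=\sum_k s_k\acts\delta\bigl(\rprod{\aB_0}{s_k,x}\bigr)$, so the cross term equals
\begin{gather*}
	\rprod{\aB_0}{\bigl(\chi(\delta)\tensor\id_{V_\sigma}\bigr)(x),y} + \rprod{\aB_0}{x,\bigl(\chi(\delta)\tensor\id_{V_\sigma}\bigr)(y)} - \delta\bigl(\rprod{\aB_0}{x,y}\bigr)
\end{gather*}
rather than the expression you claim. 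A consistency check makes the discrepancy visible: since $\chi(\delta)$ is a \Star-derivation of $\aA_0$ restricting to $\delta$ on $\aB_0$, expanding on simple tensors gives $\rprod{\aB_0}{\bigl(\chi(\delta)\tensor\id_{V_\sigma}\bigr)(x),y} + \rprod{\aB_0}{x,\bigl(\chi(\delta)\tensor\id_{V_\sigma}\bigr)(y)} = \chi(\delta)\bigl(\rprod{\aB_0}{x,y}\bigr) = \delta\bigl(\rprod{\aB_0}{x,y}\bigr)$ identically, so the vanishing of that quantity is equivalent to $\delta\bigl(\rprod{\aB_0}{x,y}\bigr)=0$ and cannot coincide with the vanishing of the cross term, which your own first step shows is what metric compatibility amounts to. To repair the argument you must carry the Leibniz correction terms through the recollection (equivalently, formulate the condition in terms of the frame part $\sum_k\bigl(\chi(\delta)\tensor\id_{V_\sigma}\bigr)(s_k)\acts\rprod{\aB_0}{s_k,\cdot}$ alone); as written, the last step is where the proof breaks.
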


To proceed, we bring to mind that, given a finitely generated projective right $\aB_0$-module $E$ together with a connection $\nabla\colon \der(\aB_0) \times E \to E$, its \emph{curvature} $R := R_\nabla$ with respect to $\nabla$ is the map defined by
	\begin{gather*}
		R \colon \ \der(\aB_0) \times \der(\aB_0) \times E \to E, \qquad R(\delta_1,\delta_2,e) := [ \nabla_{\delta_1},\nabla_{\delta_2} ] (e) - \nabla_{ [\delta_1,\delta_2] } (e).
	\end{gather*}

\begin{Lemma}\label{lem:cur}
	Let $(\sigma,V_\sigma)$ be a finite-dimensional representation of $G$.
	Then the curvature $R^\sigma := R_{\nabla^\sigma}$ of $\Gamma_{\aA_0}(\sigma)$ with respect to $\nabla^\sigma$ takes the form
	\begin{gather*}
	 	R^\sigma(\delta_1,\delta_2,x) = \sum_{k,l=1}^d s_k \acts \bigl( \delta_1 ( \rprod{\aB_0}{s_k,s_l} ) \cdot \delta_2 ( \rprod{\aB_0}{s_l,x} ) - s_k \acts \delta_2 ( \rprod{\aB_0}{s_k,s_l} ) \cdot \delta_1 ( \rprod{\aB_0}{s_l,x} ) \bigr)
	\end{gather*}
	for all $\delta_1,\delta_2 \in \der(\aB_0)$ and $x \in \Gamma_{\aA_0}(\sigma)$.
\end{Lemma}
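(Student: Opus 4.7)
The plan is to verify the formula by a direct computation: unfold the definition of curvature, expand each composition $\nabla^\sigma_{\delta_i}\nabla^\sigma_{\delta_j}(x)$ via the Leibniz rule in equation~\eqref{eq:Leibniz}, and then observe that the ``second order'' pieces line up precisely with $\nabla^\sigma_{[\delta_1,\delta_2]}(x)$ and hence cancel.

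Concretely, I would first compute $\nabla^\sigma_{\delta_1}\nabla^\sigma_{\delta_2}(x)$ by inserting the defining formula for $\nabla^\sigma_{\delta_2}(x)$ to get $\sum_l s_l \acts \delta_2(\rprod{\aB_0}{s_l,x})$, and then apply $\nabla^\sigma_{\delta_1}$ term by term. The Leibniz rule~\eqref{eq:Leibniz} for the element $s_l \acts \delta_2(\rprod{\aB_0}{s_l,x})$ splits each summand into $\nabla^\sigma_{\delta_1}(s_l) \acts \delta_2(\rprod{\aB_0}{s_l,x}) + s_l \acts \delta_1 \delta_2 (\rprod{\aB_0}{s_l,x})$. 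Inserting $\nabla^\sigma_{\delta_1}(s_l) = \sum_k s_k \acts \delta_1(\rprod{\aB_0}{s_k,s_l})$ yields
\[
\nabla^\sigma_{\delta_1}\nabla^\sigma_{\delta_2}(x) = \sum_{k,l} s_k \acts \delta_1(\rprod{\aB_0}{s_k,s_l}) \cdot \delta_2(\rprod{\aB_0}{s_l,x}) + \sum_l s_l \acts \delta_1\delta_2(\rprod{\aB_0}{s_l,x}).
\]
Swapping $\delta_1 \leftrightarrow \delta_2$ gives the analogous identity for $\nabla^\sigma_{\delta_2}\nabla^\sigma_{\delta_1}(x)$.

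Subtracting the two expressions, the ``mixed'' second-order terms combine to $\sum_l s_l \acts [\delta_1,\delta_2](\rprod{\aB_0}{s_l,x})$, which is exactly $\nabla^\sigma_{[\delta_1,\delta_2]}(x)$. Hence the commutator term is cancelled by $-\nabla^\sigma_{[\delta_1,\delta_2]}(x)$ in the definition of $R^\sigma$, and what remains is precisely the claimed double sum. The only real ``ingredients'' are the Leibniz rule~\eqref{eq:Leibniz} and the reproducing formula for $s_l$ applied inside $\nabla^\sigma_{\delta_1}(s_l)$; no additional compatibility of $\nabla^\sigma$ with the inner product~\eqref{eq:commutator} is needed here.

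There is no real obstacle in this argument — it is a bookkeeping calculation of the familiar type that produces the Christoffel-symbol curvature formula in classical differential geometry. The only mild subtlety is that the $s_k$ are not a basis but merely a standard module frame, so one must be careful to keep the summation indices separate (one index for expanding $\nabla^\sigma_{\delta_i}$ in terms of the frame, another coming from the starting formula for $\nabla^\sigma_{\delta_j}(x)$) and not to prematurely collapse them via the reproducing formula until after the cancellation of the $[\delta_1,\delta_2]$-piece has been recognized.
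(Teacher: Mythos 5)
Your proposal is correct and follows essentially the same route as the paper: both are the direct bookkeeping computation in which the mixed second-order terms $\sum_l s_l \acts \delta_1\delta_2\left( \rprod{\aB_0}{s_l,x} \right)$ collapse (via the reproducing formula) and cancel against $\nabla^\sigma_{[\delta_1,\delta_2]}(x)$, leaving the first-order double sum. The only cosmetic difference is that you invoke the module Leibniz rule~\eqref{eq:Leibniz} on each summand $s_l \acts \delta_2\left( \rprod{\aB_0}{s_l,x} \right)$, whereas the paper unfolds the definition of $\nabla^\sigma_{\delta_1}$ directly and applies the derivation property of $\delta_1$ inside the inner products --- the same ingredients in a slightly different order.
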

\begin{proof}
	Let $\delta_1,\delta_2 \in \der(\aB_0)$ and $x \in \Gamma_{\aA_0}(\sigma)$. Then a straightforward computation gives
	\begin{gather*}
 R^\sigma(\delta_1,\delta_2,x) = [ \nabla_{\delta_1}^\sigma,\nabla_{\delta_2}^\sigma ] (x) - \nabla_{ [\delta_1,\delta_2] }^\sigma (x) = \nabla_{\delta_1}^\sigma ( \nabla_{\delta_2}^\sigma(x) ) - \nabla_{\delta_2}^\sigma ( \nabla_{\delta_1}^\sigma(x) ) - \nabla_{ [\delta_1,\delta_2] }^\sigma (x)
		\\
\hphantom{R^\sigma(\delta_1,\delta_2,x)}{} = \sum_{k,l=1}^d s_k \acts \big( \delta_1 ( \rprod{\aB_0}{s_k,s_l} \cdot \delta_2 ( \rprod{\aB_0}{s_l,x} ) ) - \delta_2 ( \rprod{\aB_0}{s_k,s_l} \cdot \delta_1 ( \rprod{\aB_0}{s_l,x} ) ) \big)
		\\
\hphantom{R^\sigma(\delta_1,\delta_2,x)=}{} - \sum_{k=1}^d s_k \acts [\delta_1,\delta_2] ( \rprod{\aB_0}{s_k,x})
		\\
\hphantom{R^\sigma(\delta_1,\delta_2,x)}{}
 = \sum_{k,l=1}^d s_k \acts \bigl( \delta_1 ( \rprod{\aB_0}{s_k,s_l} ) \cdot \delta_2 ( \rprod{\aB_0}{s_l,x}) + \rprod{\aB_0}{s_k,s_l} \cdot \delta_1( \delta_2 ( \rprod{\aB_0}{s_l,x} ))
		\\			
\hphantom{R^\sigma(\delta_1,\delta_2,x)=}{} - \delta_2 ( \rprod{\aB_0}{s_k,s_l} ) \cdot \delta_1 ( \rprod{\aB_0}{s_l,x} ) - \rprod{\aB_0}{s_k,s_l} \cdot \delta_2 ( \delta_1 ( \rprod{\aB_0}{s_l,x} ) ) \bigr)
		\\
\hphantom{R^\sigma(\delta_1,\delta_2,x)=}{} - \sum_{l=1}^d s_l \acts ( \delta_1 ( \delta_2 ( \rprod{\aB_0}{s_l,x} ) ) - \delta_2 ( \delta_1 ( \rprod{\aB_0}{s_l,x} ) ) )
		\\
 \hphantom{R^\sigma(\delta_1,\delta_2,x)}{}
 = \sum_{k,l=1}^d s_k \acts \bigl( \delta_1 ( \rprod{\aB_0}{s_k,s_l} ) \cdot \delta_2 ( \rprod{\aB_0}{s_l,x} ) - s_k \acts \delta_2 ( \rprod{\aB_0}{s_k,s_l} ) \cdot \delta_1 ( \rprod{\aB_0}{s_l,x} ) \bigr).
		\!\!\!\!\!\tag*{\qed}
	\end{gather*}
	\renewcommand{\qed}{}
\end{proof}

As a corollary we can conclude that cleft actions have vanishing curvature:

\begin{Corollary}\label{cor:cur0}
Let $(\aA,G,\alpha)$ be cleft. For a finite-dimensional representation $(\sigma,V_\sigma)$ of~$G$ let~$\nabla^\sigma$ denote the metric connection. Then $R^\sigma(\delta_1,\delta_2,x) = 0$ for all $\delta_1,\delta_2 \in \der(\aB_0)$ and $x \in \Gamma_{\aA_0}(\sigma)$, i.e., the curvature vanishes identically.
\end{Corollary}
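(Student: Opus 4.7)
The plan is simply to combine the cleft-case frame from Lemma~\ref{lem:partition} with the explicit curvature formula from Lemma~\ref{lem:cur}. Since $(\aA,G,\alpha)$ is cleft, Lemma~\ref{lem:partition} provides elements $s_1,\dots,s_d \in \Gamma_{\aA_0}(\sigma)$ satisfying the stronger orthonormality relation
\[
\rprod{\aB_0}{s_k,s_l} = \delta_{k,l}\cdot\one_\aB, \qquad 1\le k,l \le d.
\]
I would use precisely this frame when building $\nabla^\sigma$ and, hence, when applying the formula of Lemma~\ref{lem:cur}.

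Next, I would invoke the elementary fact that every \Star-derivation $\delta\in\der(\aB_0)$ kills the unit: from $\delta(\one_\aB) = \delta(\one_\aB\cdot\one_\aB) = 2\delta(\one_\aB)$ one gets $\delta(\one_\aB)=0$, and consequently $\delta(\lambda\cdot\one_\aB)=0$ for any scalar $\lambda$. Applied to the inner products above, this yields
\[
\delta_j\bigl(\rprod{\aB_0}{s_k,s_l}\bigr) = \delta_j(\delta_{k,l}\cdot\one_\aB) = 0
\]
for $j=1,2$ and all $k,l$.

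Substituting into the curvature formula of Lemma~\ref{lem:cur}, each summand contains a factor of the form $\delta_j(\rprod{\aB_0}{s_k,s_l})$, so every term vanishes, and one reads off $R^\sigma(\delta_1,\delta_2,x)=0$ for all $\delta_1,\delta_2\in\der(\aB_0)$ and $x\in\Gamma_{\aA_0}(\sigma)$. There is no real obstacle here: the entire content of the corollary is that cleftness allows an orthonormal frame with scalar-valued inner products, and this immediately trivializes the curvature expression from Lemma~\ref{lem:cur}. The only minor subtlety worth remarking on is that the curvature formula in Lemma~\ref{lem:cur} is expressed in terms of a fixed frame, so one should note at the outset that the computation is carried out with the specific frame provided by the cleft hypothesis.
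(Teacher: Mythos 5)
Your proposal is correct and is precisely the argument the paper intends (the paper leaves the proof implicit, stating the corollary right after Lemma~\ref{lem:cur}): take the cleft frame from Lemma~\ref{lem:partition} with $\rprod{\aB_0}{s_k,s_l}=\delta_{k,l}\cdot\one_\aB$, note that \Star-derivations annihilate scalar multiples of the unit, and observe that every summand in the curvature formula of Lemma~\ref{lem:cur} then vanishes. Your closing remark about fixing the frame is also consistent with the paper's setup, where the $s_k$ are the columns of $s(\sigma)^*$ and $s(\sigma)$ is unitary in the cleft case.
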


\begin{Remark} With a little more effort one can also find a similar formula for the curvature associated with the metric connection $\nabla^{\chi,\sigma}$.
\end{Remark}

\begin{Remark}
	Corollary~\ref{cor:cur0} implies that cleft C\Star-dynamical systems yield trivial (primary) characteristic classes.
\end{Remark}

\subsection*{Acknowledgement}

We gratefully acknowledge the Centre International de Rencontres Math\'ematiques, \href{https://conferences.cirm-math.fr/2177}{REB}~2177, as well as Blekinge Tekniska H\"ogskola for supporting this research.
The first name author expresses his gratitude to iteratec GmbH.
Last but not least, we wish to thank the anonymous referees for providing fruitful criticism that helped to improve the manuscript.

\pdfbookmark[1]{References}{ref}
\LastPageEnding


\begin{thebibliography}{99}
\footnotesize\itemsep=0pt

\bibitem{Jo21}
Arnlind J., Tiger~Norkvist A., Noncommutative minimal embeddings and morphisms
 of pseudo-{R}iemannian calculi, \href{https://doi.org/10.1016/j.geomphys.2020.103898}{\textit{J.~Geom. Phys.}} \textbf{159} (2021),
 103898, 17~pages, \href{https://arxiv.org/abs/1906.03885}{arXiv:1906.03885}.

\bibitem{Jo18}
Arnlind J., Wilson M., On the {C}hern--{G}auss--{B}onnet theorem for the
 noncommutative 4-sphere, \href{https://doi.org/10.1016/j.geomphys.2016.10.016}{\textit{J.~Geom. Phys.}} \textbf{111} (2017),
 126--141, \href{https://arxiv.org/abs/1604.01159}{arXiv:1604.01159}.

\bibitem{Jo17}
Arnlind J., Wilson M., Riemannian curvature of the noncommutative 3-sphere,
 \href{https://doi.org/10.4171/JNCG/11-2-3}{\textit{J.~Noncommut. Geom.}} \textbf{11} (2017), 507--536,
 \href{https://arxiv.org/abs/1505.07330}{arXiv:1505.07330}.

\bibitem{Ati57}
Atiyah M.F., Complex analytic connections in fibre bundles, \href{https://doi.org/10.2307/1992969}{\textit{Trans.
 Amer. Math. Soc.}} \textbf{85} (1957), 181--207.

\bibitem{BCER84}
Batty C.J.K., Carey A.L., Evans D.E., Robinson D.W., Extending derivations,
 \href{https://doi.org/10.2977/prims/1195181832}{\textit{Publ. Res. Inst. Math. Sci.}} \textbf{20} (1984), 119--130.

\bibitem{BaCoHa15}
Baum P.F., De~Commer K., Hajac P.M., Free actions of compact quantum groups on
 unital {$C^*$}-algebras, \href{https://doi.org/10.25537/dm.2017v22.825-849}{\textit{Doc. Math.}} \textbf{22} (2017), 825--849,
 \href{https://arxiv.org/abs/1304.2812}{arXiv:1304.2812}.

\bibitem{BaHa-etal07}
Baum P.F., Hajac P.M., Matthes R., Szymanski W., Noncommutative geometry
 approach to principal and associated bundles, \href{https://arxiv.org/abs/math.DG/0701033}{arXiv:math.DG/0701033}.

\bibitem{BeMA20}
Beggs E.J., Majid S., Quantum {R}iemannian geometry, \textit{Grundlehren der
 mathematischen Wissenschaften}, Vol.~355, \href{https://doi.org/10.1007/978-3-030-30294-8}{Springer}, Cham, 2020.

\bibitem{Bre04}
Brzezi\'nski T., Hajac P.M., The {C}hern--{G}alois character,
 \href{https://doi.org/10.1016/j.crma.2003.11.009}{\textit{C.~R.~Math. Acad. Sci. Paris}} \textbf{338} (2004), 113--116,
 \href{https://arxiv.org/abs/math.KT/0306436}{arXiv:math.KT/0306436}.

\bibitem{CacMes19}
\'Ca\'ci\'c B., Mesland B., Gauge theory on noncommutative {R}iemannian
 principal bundles, \href{https://doi.org/10.1007/s00220-021-04187-8}{\textit{Comm. Math. Phys.}} \textbf{388} (2021), 107--198,
 \href{https://arxiv.org/abs/1912.04179}{arXiv:1912.04179}.

\bibitem{daleckii02}
Dalec'ki\u{\i} J.L., Kre\u{\i}n M.G., Stability of solutions of differential
 equations in {B}anach space, \textit{Translations of Mathematical
 Monographs}, Vol.~43, Amer. Math. Soc., Providence, R.I., 1974.

\bibitem{DGH01}
D\c{a}browski L., Grosse H., Hajac P.M., Strong connections and
 {C}hern--{C}onnes pairing in the {H}opf--{G}alois theory, \href{https://doi.org/10.1007/s002200100433}{\textit{Comm. Math.
 Phys.}} \textbf{220} (2001), 301--331, \href{https://arxiv.org/abs/math.QA/9912239}{arXiv:math.QA/9912239}.

\bibitem{DaSi13}
D\c{a}browski L., Sitarz A., Noncommutative circle bundles and new {D}irac
 operators, \href{https://doi.org/10.1007/s00220-012-1550-8}{\textit{Comm. Math. Phys.}} \textbf{318} (2013), 111--130,
 \href{https://arxiv.org/abs/1012.3055}{arXiv:1012.3055}.

\bibitem{DaSiZu14}
D\c{a}browski L., Sitarz A., Zucca A., Dirac operators on noncommutative
 principal circle bundles, \href{https://doi.org/10.1142/S0219887814500121}{\textit{Int.~J. Geom. Methods Mod. Phys.}}
 \textbf{11} (2014), 1450012, 29~pages, \href{https://arxiv.org/abs/1305.6185}{arXiv:1305.6185}.

\bibitem{CoYa13a}
De~Commer K., Yamashita M., A construction of finite index {${\rm
 C}^*$}-algebra inclusions from free actions of compact quantum groups,
 \href{https://doi.org/10.4171/PRIMS/117}{\textit{Publ. Res. Inst. Math. Sci.}} \textbf{49} (2013), 709--735,
 \href{https://arxiv.org/abs/1201.4022}{arXiv:1201.4022}.

\bibitem{DuVio88}
Dubois-Violette M., D\'erivations et calcul diff\'erentiel non commutatif,
 \textit{C.~R.~Acad. Sci. Paris S\'er.~I Math.} \textbf{307} (1988), 403--408.

\bibitem{DVMiKriMaMi01}
Dubois-Violette M., Kriegl A., Maeda Y., Michor P.W., Smooth {$*$}-algebras,
 \href{https://doi.org/10.1143/PTPS.144.54}{\textit{Progr. Theoret. Phys. Suppl.}} \textbf{144} (2001), 54--78,
 \href{https://arxiv.org/abs/math.QA/0106150}{arXiv:math.QA/0106150}.

\bibitem{EchNeOy09}
Echterhoff S., Nest R., Oyono-Oyono H., Principal non-commutative torus
 bundles, \href{https://doi.org/10.1112/plms/pdn050}{\textit{Proc. Lond. Math. Soc.}} \textbf{99} (2009), 1--31,
 \href{https://arxiv.org/abs/0810.0111}{arXiv:0810.0111}.

\bibitem{Ell00}
Ellwood D.A., A new characterisation of principal actions, \href{https://doi.org/10.1006/jfan.2000.3561}{\textit{J.~Funct.
 Anal.}} \textbf{173} (2000), 49--60.

\bibitem{ForRen19}
Forsyth I., Rennie A., Factorisation of equivariant spectral triples in
 unbounded {$KK$}-theory, \href{https://doi.org/10.1017/s1446788718000423}{\textit{J.~Aust. Math. Soc.}} \textbf{107} (2019),
 145--180, \href{https://arxiv.org/abs/1505.02863}{arXiv:1505.02863}.

\bibitem{HaMa10}
Hannabuss K.C., Mathai V., Noncommutative principal torus bundles via
 parametrised strict deformation quantization, in Superstrings, Geometry,
 Topology, and {$C^\ast$}-Algebras, \textit{Proc. Sympos. Pure Math.},
 Vol.~81, \href{https://doi.org/10.1090/pspum/081/2681761}{Amer. Math. Soc.}, Providence, RI, 2010, 133--147,
 \href{https://arxiv.org/abs/0911.1886}{arXiv:0911.1886}.

\bibitem{IoMa16}
Iochum B., Masson T., Crossed product extensions of spectral triples,
 \href{https://doi.org/10.4171/JNCG/229}{\textit{J.~Noncommut. Geom.}} \textbf{10} (2016), 65--133, \href{https://arxiv.org/abs/1406.4642}{arXiv:1406.4642}.

\bibitem{Ivan17}
Ivankov P., Quantization of noncompact coverings, \href{https://arxiv.org/abs/1702.07918}{arXiv:1702.07918}.

\bibitem{Kob69II}
Kobayashi S., Nomizu K., Foundations of differential geometry. {V}ol.~{II},
 \textit{Wiley Classics Library}, John Wiley \& Sons, Inc., New York, 1996.

\bibitem{LaSu05}
Landi G., van Suijlekom W., Principal fibrations from noncommutative spheres,
 \href{https://doi.org/10.1007/s00220-005-1377-7}{\textit{Comm. Math. Phys.}} \textbf{260} (2005), 203--225,
 \href{https://arxiv.org/abs/math.QA/0410077}{arXiv:math.QA/0410077}.

\bibitem{Lec85}
Lecomte P.B.A., Sur la suite exacte canonique associ\'ee \`a un fibr\'e
 principal, \href{https://doi.org/10.24033/bsmf.2032}{\textit{Bull. Soc. Math. France}} \textbf{113} (1985), 259--271.

\bibitem{Mac95}
Mac~Lane S., Homology, \textit{Classics in Mathematics}, \href{https://doi.org/10.1007/978-3-642-62029-4}{Springer-Verlag}, Berlin~--
 Heidelberg, 1995.

\bibitem{Meir18}
Meir E., Hopf cocycle deformations and invariant theory, \href{https://doi.org/10.1007/s00209-019-02326-5}{\textit{Math.~Z.}}
 \textbf{294} (2020), 1355--1395, \href{https://arxiv.org/abs/1804.00289}{arXiv:1804.00289}.

\bibitem{Neeb08}
Neeb K.-H., Lie group extensions associated to projective modules of continuous
 inverse algebras, \textit{Arch. Math. (Brno)} \textbf{44} (2008), 465--489,
 \href{https://arxiv.org/abs/0802.2993}{arXiv:0802.2993}.

\bibitem{Ne13}
Neshveyev S., Duality theory for nonergodic actions, \href{https://doi.org/10.17879/58269764511}{\textit{M\"unster J.
 Math.}} \textbf{7} (2014), 413--437, \href{https://arxiv.org/abs/1303.6207}{arXiv:1303.6207}.

\bibitem{Phi87}
Phillips N.C., Equivariant {$K$}-theory and freeness of group actions on
 {$C^*$}-algebras, \textit{Lecture Notes in Math.}, Vol.~1274,
 \href{https://doi.org/10.1007/BFb0078657}{Springer-Verlag}, Berlin, 1987.

\bibitem{Rieffel91}
Rieffel M.A., Proper actions of groups on {$C^*$}-algebras, in Mappings of
 Operator Algebras ({P}hiladelphia, {PA}, 1988), \textit{Progr. Math.},
 Vol.~84, \href{https://doi.org/10.1007/978-1-4612-0453-4_6}{Birkh\"auser Boston}, Boston, MA, 1990, 141--182.

\bibitem{Rieffel08}
Rieffel M.A., A global view of equivariant vector bundles and {D}irac operators
 on some compact homogeneous spaces, in Group Representations, Ergodic Theory,
 and Mathematical Physics: a Tribute to {G}eorge {W}.~{M}ackey,
 \textit{Contemp. Math.}, Vol. 449, \href{https://doi.org/10.1090/conm/449/08721}{Amer. Math. Soc.}, Providence, RI, 2008,
 399--415, \href{https://arxiv.org/abs/math.DG/0703496}{arXiv:math.DG/0703496}.

\bibitem{SchWa17a}
Schwieger K., Wagner S., Part~{I}, {F}ree actions of compact {A}belian groups
 on {${\rm C}^*$}-algebras, \href{https://doi.org/10.1016/j.aim.2017.06.036}{\textit{Adv. Math.}} \textbf{317} (2017), 224--266,
 \href{https://arxiv.org/abs/1505.00688}{arXiv:1505.00688}.

\bibitem{SchWa17b}
Schwieger K., Wagner S., Part {II}, {F}ree actions of compact groups on {$\rm
 C^*$}-algebras, \href{https://doi.org/10.4171/JNCG/11-2-6}{\textit{J.~Noncommut. Geom.}} \textbf{11} (2017), 641--668,
 \href{https://arxiv.org/abs/#2}{arXiv:1508.07904}.

\bibitem{SchWa17c}
Schwieger K., Wagner S., Part {III}, {F}ree actions of compact quantum groups
 on {${\rm C}^*$}-algebras, \href{https://doi.org/10.3842/SIGMA.2017.062}{\textit{SIGMA}} \textbf{13} (2017), 062, 19~pages,
 \href{https://arxiv.org/abs/1701.05895}{arXiv:1701.05895}.

\bibitem{SchWa20}
Schwieger K., Wagner S., Noncommutative coverings of quantum tori,
 \href{https://doi.org/10.7146/math.scand.a-116147}{\textit{Math. Scand.}} \textbf{126} (2020), 99--116, \href{https://arxiv.org/abs/1710.09396}{arXiv:1710.09396}.

\bibitem{SchWa21}
Schwieger K., Wagner S., Lifting spectral triples to noncommutative principal
 bundles, \href{https://doi.org/10.1016/j.aim.2021.108160}{\textit{Adv. Math.}} \textbf{396} (2022), 108160, 36~pages,
 \href{https://arxiv.org/abs/2012.15364}{arXiv:2012.15364}.

\bibitem{Wa12M}
Wagner S., Free group actions from the viewpoint of dynamical systems,
 \textit{M\"unster~J. Math.} \textbf{5} (2012), 73--97, \href{https://arxiv.org/abs/1111.5562}{arXiv:1111.5562}.

\bibitem{Wa18}
Wagner S., Secondary characteristic classes of {L}ie algebra extensions,
 \href{https://doi.org/10.24033/bsmf.278}{\textit{Bull. Soc. Math. France}} \textbf{147} (2019), 443--453,
 \href{https://arxiv.org/abs/1707.08412}{arXiv:1707.08412}.

\bibitem{Wahl10}
Wahl C., Index theory for actions of compact {L}ie groups on {$C^*$}-algebras,
 \textit{J.~Operator Theory} \textbf{63} (2010), 217--242, \href{https://arxiv.org/abs/0707.3207}{arXiv:0707.3207}.

\end{thebibliography}
\end{document}